\documentclass[english,11pt]{amsart}
\usepackage[T1]{fontenc}
\usepackage[latin9]{inputenc}
\usepackage{babel}
\usepackage{geometry}
\geometry{verbose,lmargin=2.75cm,rmargin=2.75cm}
\usepackage{amsthm}
\usepackage{amsmath}
\usepackage{cleveref}
\usepackage{relsize}
\usepackage{amssymb}
 \usepackage{amsfonts}
 \usepackage{amscd}
 \usepackage[all]{xy}
\usepackage[dvipsnames]{xcolor}
\usepackage{pdfsync}
\usepackage{url}
%%%%%%%%%%%%%%%%%%%%%%%%%%   Layouts %%%%%%%%%%%%%%%%%%%%%%%%%%%%%%%%%%%%%%%
\usepackage[sc]{mathpazo}

\makeatletter
%%%%%%%%%%%%%%%%%%%%%%%%%%%%%% Textclass specific LaTeX commands.
  \theoremstyle{remark}
  \theoremstyle{example}
  \newtheorem*{rem*}{\protect\remarkname}
\makeatother

\usepackage{babel}
  \providecommand{\remarkname}{Remark}
  \providecommand{\lemmaname}{Lemma}

\theoremstyle{plain}
\newtheorem{thm}{\protect\theoremname}[section]  %%I changed the theorem counters by section. 

  \theoremstyle{definition}
  \newtheorem{example}[thm]{\protect\examplename}
  \theoremstyle{definition}
  \newtheorem{defn}[thm]{\protect\definitionname}
  \theoremstyle{remark}
  \newtheorem{rem}[thm]{\protect\remarkname}
  \theoremstyle{lemma}
  \newtheorem{lemma}[thm]{\protect\lemmaname}
    \theoremstyle{prop}
  \newtheorem{prop}[thm]{\protect\propositionname}
    \theoremstyle{conj}
  \newtheorem{conj}[thm]{\protect\conjname}
    \theoremstyle{question}
  \newtheorem{question}[thm]{\protect\questionname}
    \theoremstyle{cor}
  \newtheorem{cor}[thm]{\protect\corname}

\makeatother

\usepackage{babel}
  \providecommand{\definitionname}{Definition}
  \providecommand{\examplename}{Example}
\providecommand{\theoremname}{Theorem}
\providecommand{\propositionname}{Proposition}
\providecommand{\conjname}{Conjecture}
\providecommand{\questionname}{Question}
\providecommand{\corname}{Corollary}

\newcommand{\cX}{X}

\newcommand{\Xmn}{X_{mn}}

\newcommand{\Ymn}{Y_{mn}}
\newcommand{\oYmn}{Y_{mn}^o}

\newcommand{\mfX}{\mathcal X}
\newcommand{\mfY}{\mathcal Y}

\newcommand{\cG}{\mathcal G}

\newcommand{\cU}{\mathcal U}
\newcommand{\cF}{\mathcal F}
\newcommand{\Lams}{\Lambda}

\newcommand{\vX}{X}
\newcommand{\oX}{X^o}

\newcommand{\oZ}{Z^o}

\newcommand{\aaCount}{20}

\newcommand{\cc}{\mathbb C}
\newcommand{\zz}{\mathbb Z}
\newcommand{\pp}{\mathbb P}
\newcommand{\bfc}{{\bf c}}

\DeclareMathOperator{\gdeg}{gdeg}
\DeclareMathOperator{\MLdegY}{MLdeg^o}
\DeclareMathOperator{\MLdegX}{MLdeg}

  %%Not sure if this is good notation. At first I used \ell but we said cursive should be varieties.

\DeclareMathOperator{\rank}{rank}

\begin{document}

%\title{\vspace{-4ex} The maximum likelihood degree of rank 2 matrices\\ via  Euler characteristics}
\title{\vspace{-4ex}  The maximum likelihood degree of mixtures of independence models}

%\author{Jose Israel Rodriguez and Botong Wang}
\author[Rodriguez]{Jose Israel Rodriguez}
\address{
Jose Israel Rodriguez\\
The University of Chicago\\
Dept. of Statistics\\
5734 S. University Ave.
Chicago, IL 60637}
\email{joisro@Uchicago.edu}
\urladdr{http://home.uchicago.edu/~joisro}

\author[Wang]{Botong Wang}
\address{
Botong Wang\\
University of Wisconsin-Madison\\
 Department of Mathematics\\
Van Vleck Hall, 480 Lincoln Drive, 
Madison, WI}
\email{wang@math.wisc.edu}
\urladdr{http://www.math.wisc.edu/~wang/}

\maketitle\vspace{-4ex} 
\begin{abstract}
\noindent
%$\{\mathbf Y\mathcal Y\mathfrak Y\mathsf Y\mathbb Y\}$
%$\{\mathbf X\mathcal X\mathfrak X\mathsf X\mathbb X\}$
%$\{\mathbf Z\mathcal Z\mathfrak Z\mathsf Z\mathbb Z\}$
%$\{\mathbf M\mathcal M\mathfrak M\mathsf M\mathbb M\}$
The maximum likelihood degree (ML degree) measures the algebraic complexity of a fundamental optimization problem in statistics: maximum likelihood estimation. 
In this problem, one maximizes the likelihood function over a statistical model. 
The ML degree of a model is an upper bound to the number of local extrema of the likelihood function and can be expressed as a weighted sum of Euler characteristics. 	
The independence model (i.e. rank one matrices over the probability simplex) is well known to have an ML degree of one, meaning there is a unique local maximum of the likelihood function. 
However, for mixtures of independence models (i.e. rank two matrices over the probability simplex),  
it was an open question as to how the ML degree behaved.
In this paper, we use Euler characteristics to  prove an outstanding conjecture by Hauenstein, the first author, and Sturmfels;
we give recursions and closed form expressions for the ML degree of mixtures of independence models.
%Moreover, our techniques give a recursive expression for the ML degree of these models.
%More broadly, we show how these techniques give a recursive expression for the ML degree of $m\times n$ rank $2$ matrices.

%{\color{red}  
%Tex Notes: $\backslash$vX denotes the variety $\vX$ and 
% $\backslash$oX denotes the very affine variety $\oX$.}
\end{abstract}

\section{Introduction}
%%%%%%%%%%%%%%%%%%%%%%%%%%%%
%%NEW INTRO
%%%%%%%%%%%%%%%%%%%%%%%%%%%%
{%\color{blue}
Maximum likelihood estimation  is a fundamental computational task in statistics. 
 A typical problem encountered in its applications is the occurrence of multiple local maxima.
To be certain that a global maximum of the likelihood function has been achieved,
one locates all solutions to a system of polynomial equations called \textit{likelihood equations}; 
every local maxima is a solution to these equations. 
The number of solutions to these equations is called the maximum likelihood degree (ML degree) of the model. 
This degree was %{\color{red}problem \cite{SAB2005,HS2014}. }
 introduced in \cite{CHKS06,HKS05} and 
gives a measure of the complexity to the global optimization problem, as it bounds the number of local maxima.

The maximum likelihood degree has been studied in many contexts.
Some of these contexts include Gaussian graphical models \cite{Uhler12}, variance component models \cite{GDP12}, and in missing data \cite{HS09}. 
In this manuscript, we work in the context of discrete random variables (for a recent survey in this context, see \cite{HS2014}).
In our main results, we provide closed form expressions 
for ML degrees of mixtures of independence models, which are sets of joint probability distributions for two random variables. 
This answers the outstanding Conjecture 4.1 in \cite{HRS14}.

\subsection{Algebraic statistics preliminaries}
We consider a model for two discrete random variables, having 
$m$ and $n$ states respectively.  
A joint probability distribution for two such random variables is written as an 
$m \times n$-matrix:
{\smaller
\begin{equation}\label{theP}
P=\left[
\begin{array}{cccc}
p_{11} & p_{12}&\cdots & p_{1n}\\
p_{21} & p_{22}&\cdots& p_{2n}\\
\vdots & \vdots & \ddots&\vdots \\
p_{m1} & p_{m2} &\cdots &p_{mn}
\end{array}
\right].
\end{equation}}%
The $(i,j)$th entry $p_{ij}$ represents the probability that the
first variable is in state $i$
and the second variable is in state~$j$. 
By a statistical model, we mean a  subset
${\mathcal M}$ of the 
probability simplex $\triangle_{mn-1}$
of all such matrices $P$.  
The $d-1$ dimensional probability simplex is defined as 
$\triangle_{d-1}~:=~\left\{(p_1,p_2,\dots,p_d)\in\mathbb{R}^d_{\geq 0}\text{ such that } \sum p_i=1   \right\}$.~

If i.i.d. samples are drawn from some distribution $P$,
then the \textit{data} is summarized by the following matrix \eqref{theU}.
The entries of $u$ are non-negative integers where $u_{ij}$ is the number of samples drawn with state
$(i,j)$:
{\smaller
\begin{equation}\label{theU}
u=\left[
\begin{array}{cccc}
u_{11} & u_{12}&\cdots & u_{1n}\\
u_{21} & u_{22}&\cdots& u_{2n}\\
\vdots & \vdots & \ddots&\vdots \\
u_{m1} & u_{m2} &\cdots &u_{mn}
\end{array}
\right].
\end{equation}
}

The likelihood function corresponding to the data matrix $u$ is given by 
\begin{equation}\label{likelihoodFunction}
\ell_u(p):=p_{11}^{u_{11}}p_{12}^{u_{12}}\cdots p_{mn}^{u_{mn}}.
\end{equation}
Maximum likelihood estimation is an optimization problem for the likelihood function. This problem consists of determining, for fixed $u$, the argmax of $\ell_u (p)$ on a statistical model ${\mathcal M}$. 
The optimal solution is called the maximum likelihood estimate (mle) and is used to measure the true probability distribution. 
For the models we consider, the mle is a solution to the likelihood equations. 
In other words, by solving the likelihood equations, we solve the maximum likelihood estimation problem. 
Since the ML degree is the number of solutions to the likelihood equations,  it gives a measure on the difficulty of the problem. 

The model ${\mathcal M}_{mn}$ in $\triangle_{mn-1}$ is said to be the \textit{mixture of independence models} and is 
 defined to be the image of the following map:
\begin{equation}
\begin{array}{ccc}
\left(\triangle_{m-1}\times\triangle_{n-1}\right)\times\left(\triangle_{m-1}\times\triangle_{n-1}\right)\times\triangle_{1} & \to & {\mathcal M}_{mn}\\
\left(R_{1},B_{1},R_{2},B_{2},C\right) & \mapsto & c_{1}R_{1}B_{1}^{T}+c_{2}R_{2}B_{2}^{T}
\end{array},
\end{equation}
where $R_i,B_j,C:=[c_1,c_2]^T$ are $m\times 1$ matrices, $n\times 1$, and $2\times 1$ matrices respectively with positive entries that sum to one.
The Zariski closure of ${\mathcal M}_{mn}$ yields the variety of rank at most $2$ matrices over the complex numbers. 
We will determine the ML degree of the models ${\mathcal M}_{mn}$ by studying the topology of the Zariski closure.
Prior to our work, the ML degrees of these models were only known for small values of $m$ and $n$. 
In \cite{HRS14}, the following table of ML degrees of  ${\mathcal M}_{mn}$ were computed:
$$
\begin{array}{ccccccccccc}
 n= & 3 & 4 & 5 & 6 & 7 \\%& 8 & 9 & 10   \\
m=3: & 10 & 26 & { 58} & { 122} & { 250}\\% & { 506} & {1018} & { 2042} \\
m=4: & 26 & { 191} & { 843} & { 3119} &  \\
m=5: & 58 & { 843} & { 6776} &  & & . 
\end{array}$$
Mixtures of independence  models appear in many places in science, statistics, and mathematics. 
In computational biology, the case where $(m,n)$ equals $(4,4)$ is discussed in Example 1.3 of \cite{PS05}, and the data $u$ consists of a pair of DNA sequences of length $(u_{11}+u_{12}+\cdots+u_{mn})$. 
The $m$ and $n$ equal four because 
DNA molecules are composed of four nucleotides.
Another interesting case for computational biology is when $(m,n)=(\aaCount,\aaCount)$ because there are $\aaCount$ essential  amino acids [Remark \ref{AminoAcidRemark}].

Our first main result, Theorem \ref{hrsWin}, proves a formula for the first row of the table: 
$$
\texttt{MLdegree}{\mathcal M}_{3n}=2^{n+1}-6\texttt{ for }n\geq 3.
$$
Our second main result, is a recursive expression for the ML degree of mixtures of independence models. 
As a consequence, we are able to calculate a
closed form expression  for each row of the above table of ML degrees [Corollary \ref{compExpressions}]. 
%Moreover, our technique gives formulas in terms of $n$ for the ML degree of  rank $m\times n$ rank $2$ matrices for fixed $m$.

Our techniques relate  ML degrees to Euler characteristics. Previously,
 Huh has shown that the ML degree of a smooth algebraic statistical models ${\mathcal M}$ with Zariski closure $X$ equals  the signed topological Euler characteristic of an open subvariety $X^o$, where 
$X^o$ is the set of points of $X$ with nonzero coordinates and coordinate sums  \cite{Huh13}. 
More recent work of Budur and the second author shows that the ML degree of a singular model is a stratified topological invariant. 
In \cite{BW14}, they show that the Euler characteristic of $X^o$ is a sum of ML degrees weighted by Euler obstructions.
These Euler obstructions, in a sense, measure the multiplicity of the singular~locus.

We conclude this introduction with illustrating examples to set notation and definitions.
}

\subsection{Defining the maximum likelihood degree}
 We will use two notions of maximum likelihood degree. 
 The first notion is from a computational algebraic geometry perspective, where we define the maximum likelihood degree  for a projective variety. 
 When this projective variety is contained in a hyperplane, the maximum likelihood degree has an interpretation related to statistics.
The second notion is from a topological perspective, where we define the maximum likelihood degree for a very affine variety, a subvariety of an algebraic torus $(\cc ^*)^n$.  
%This is done in terms of the vanishing of a a $1$-form and .....

To $\mathbb{P}^{n+1}$,
 we associate the coordinates $p_0,p_1,\dots,p_n,$ and $p_s$ (were $s$ stands for sum). 
Consider the \textit{distinguished hyperplane} in $\mathbb{P}^{n+1}$ defined by $p_0+\cdots+p_n-p_s=0$ ($p_s$ is the sum of the other coordinates).

Let $\cX$ be a generically reduced variety contained in the distinguished hyperplane  of $\mathbb{P}^{n+1}$ not contained in any coordinate hyperplane. 
We will be interested in the critical points of the \textit{likelihood function} 
$$\ell_u(p):=p_0^{u_0}p_1^{u_1}\cdots p_n^{u_n}p_s^{u_s}$$
where $u_s:=-u_0-\cdots -u_n$ and $u_0,\dots,u_n\in\mathbb{C}$. The likelihood function has the nice property that, up to scaling,
its gradient is a rational function $\nabla \ell_u(p):=[\frac{u_0}{p_0}:\frac{u_1}{p_1}:\dots:\frac{u_n}{p_n}:\frac{u_s}{p_s}]$.
 
\begin{defn} Let $u$ be fixed. % and thereby fixing the likelihood function $\ell_u(p)$.
A point $p\in\cX$   is said to be a \textit{critical point of the likelihood function on} $\cX$ if  
$p$ is a regular point of $\cX$,
 each coordinate of $p$ is nonzero, and 
 the gradient $\nabla \ell_u(p)$ at $p$  is orthogonal to the tangent space of $\cX$ at $p$.
\end{defn}
\begin{example}\label{exampleDet22}
Let $\vX$ of $\pp^4$ be defined by $p_0+p_1+p_2+p_3-p_s$ and $p_0p_3-p_1p_2$.
For $[u_0:u_1:u_2:u_3:u_s]=[2:8:5:10:-25]$ there is a unique critical point  for $\ell_u(p)$ on $\vX$.  This point is $[p_0:p_1:p_2:p_3:p_s]=[70:180:105:270:-625]$.
Whenever the coordinates are nonzero, there is a unique critical point 
$[(u_0+u_1)(u_0+u_2):(u_0+u_1)(u_1+u_3):
(u_2+u_3)(u_0+u_2):(u_2+u_3)(u_1+u_3):
-(u_0+u_1+u_2+u_3)^2]$.
\end{example}

\begin{defn}
The \textit{maximum likelihood degree of} $\cX$  is defined to be the number of critical points of the  likelihood function on $\cX$ for general $u_0,\dots,u_n$. The maximum likelihood degree of $X$ is denoted $\MLdegX(\cX)$.
\end{defn}

We say $u^*$ in $\mathbb{C}^{n+1}$ is general, whenever there exists a dense Zariski open set  $\cU$ for which the number of critical points of $\ell_u(p)$ is constant and $u^*\in\cU$.
In Example \ref{exampleDet22}, the Zariski open set $\cU$ is the complement of the variety defined by 
$(u_0 + u_1)(u_0 + u_2)(u_1 + u_3)(u_2 + u_3)u_s=0$.
Determining this Zariski open set explicitly is often quite difficult, but it is not necessary when using reliable 
 probabilistic algorithms  to compute ML degrees as done in \ref{HKS05} for example. 
However, with our results, we compute ML degrees using Euler characteristics and topological arguments.

\subsection{Using Euler characteristics}
First, we recall that the Euler characteristic of a topological space $X$ is defined as
$$\chi(X)=\sum_{i\geq 0}(-1)^i\dim H^i(X, \mathbb{Q}),$$
where the $H^i$ are the singular cohomology groups.  

In the definition of maximum likelihood degree of a projective variety $\vX$, a critical point $p\in \vX$ must have nonzero coordinates. This means all critical points of the likelihood function are contained in the underlying very affine variety of 
$$\vX^o:=\vX\backslash\{\text{coordinate hyperplanes} \}.$$
In fact, the ML degree is directly related to the Euler characteristic of smooth $\vX^o$.
%{\color{red} Ref: Authors need to define Euler characteristic before the next Theorem. Done}

\begin{thm}[\cite{Huh13}]
Suppose $\vX$ is a smooth projective variety of $\mathbb{P}^{n+1}$. 
Then, 
\begin{equation}\label{smoothCase}
\chi(\vX^o)=(-1)^{\dim \vX^o}\MLdegX \vX.
\end{equation}
\end{thm}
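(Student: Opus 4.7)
The plan is to interpret the ML degree as the top Chern number of a logarithmic cotangent bundle and then identify that Chern number with the signed Euler characteristic via a logarithmic Gauss--Bonnet theorem.

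First I would rewrite the ML degree as a count of zeros of a $1$-form. Consider the logarithmic differential
\[
\omega_u \;:=\; d\log \ell_u \;=\; \sum_{i=0}^{n} u_i\,\tfrac{dp_i}{p_i} \;+\; u_s\,\tfrac{dp_s}{p_s},
\]
which is a regular section of $\Omega^1_{T}$ on the algebraic torus $T\subset\mathbb{P}^{n+1}$ obtained by removing the coordinate hyperplanes. A point $p\in \vX^o$ is a critical point of $\ell_u$ on $\vX$ precisely when the restriction $\omega_u|_{\vX^o}$ vanishes at $p$. Since the sheaf $\Omega^1_{T}$ is trivial, generated by the logarithmic forms $\frac{dp_i}{p_i}$, its restriction generates $\Omega^1_{\vX^o}$ globally; hence for generic $u$ the section $\omega_u|_{\vX^o}$ has only isolated, non-degenerate zeros, and their number equals $\MLdegX(\vX)$.

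Next I would pass to a good compactification. By Hironaka, choose a smooth projective compactification $\bar{X}$ of $\vX^o$ (for instance, $\bar{X}=\vX$ itself if $\vX$ happens to meet the boundary divisors nicely, or a suitable resolution of $\vX$ otherwise) such that $D:=\bar{X}\setminus \vX^o$ is a simple normal crossings divisor. The form $\omega_u$ has at worst logarithmic poles along $D$, so it extends to a global section of the logarithmic cotangent bundle $\Omega^1_{\bar X}(\log D)$. The residue of $\omega_u$ along a component of $D$ coming from the vanishing of a monomial $\prod p_i^{a_i}$ is a nonzero linear combination of $u_0,\dots,u_n,u_s$, so for generic $u$ this residue is nonvanishing, which forces the zero locus of the extended section to lie entirely in $\vX^o$ and to consist of reduced points. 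Consequently,
\[
\MLdegX(\vX)\;=\;\#\bigl\{\omega_u=0\bigr\}\;=\;\int_{\bar X} c_{\dim \vX^o}\bigl(\Omega^1_{\bar X}(\log D)\bigr).
\]

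Finally I would invoke the logarithmic Gauss--Bonnet formula, which for a smooth projective $\bar{X}$ with SNC boundary $D$ expresses the topological Euler characteristic of the open part as a top Chern number of the log cotangent bundle:
\[
\chi(\vX^o)\;=\;(-1)^{\dim \vX^o}\int_{\bar X} c_{\dim \vX^o}\bigl(\Omega^1_{\bar X}(\log D)\bigr).
\]
Combining the two displayed equalities yields $\chi(\vX^o)=(-1)^{\dim \vX^o}\MLdegX(\vX)$, as claimed.

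The main obstacle is controlling the zero locus of $\omega_u$ at infinity: one must verify that for $u$ in a Zariski dense open set the extension of $\omega_u$ to $\bar{X}$ has no zeros on $D$, and that the zeros it does have in $\vX^o$ are transverse so that the Chern-class computation counts them with multiplicity one. This requires a careful, component-by-component analysis of the residues of the log $1$-form along the components of the SNC divisor, using that the residues depend linearly (hence generically non-trivially) on $u$.
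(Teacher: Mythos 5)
Your outline is essentially correct, but note that the paper does not prove this statement at all: it is quoted from Huh's work, and the only ``proof'' implicit in the paper is the specialization of its own machinery --- Corollary \ref{MLdegformula} applied to the trivial Whitney stratification $S_1=X^o$ of a smooth variety, where $e_{11}=(-1)^{\dim X^o}$ and the identity follows from the Franecki--Kapranov theorem relating Gaussian degrees of characteristic cycles to Euler characteristics. Your route is genuinely different and more classical: you identify critical points of $\ell_u$ with zeros of the logarithmic $1$-form $\omega_u=d\log\ell_u$, extend $\omega_u$ to a section of $\Omega^1_{\bar X}(\log D)$ on an SNC compactification, and compare the resulting top Chern number with $\chi(X^o)$ via logarithmic Gauss--Bonnet. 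What your approach buys is a concrete intersection-theoretic interpretation of the ML degree as $\int_{\bar X}c_{\mathrm{top}}(\Omega^1_{\bar X}(\log D))$ and no sheaf-theoretic input; what the characteristic-cycle approach buys is exactly what this paper needs, namely a formulation that survives when $X^o$ is singular, where the single Chern number is replaced by a weighted sum over strata with Euler obstructions as coefficients.

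Two points in your sketch deserve care. First, the claim that the residue of $\omega_u$ along each component $E$ of $D$ is a \emph{nonzero} linear form in $u$ requires knowing that some coordinate function $p_i/p_s$ has a genuine zero or pole along $E$; this holds because $E$ lies over the boundary of the torus (after further blow-ups so that each $p_i/p_s$ defines a morphism to $\mathbb{P}^1$), but it is the one step you assert rather than prove, and you correctly flag it. Second, the equality of the zero count with the Chern number needs the zeros to be reduced and isolated for generic $u$; this follows from the global generation of $\Omega^1_{X^o}$ by the restrictions of the forms $dp_i/p_i$ together with a Bertini-type argument, which you invoke appropriately. With those two steps filled in, the argument is complete.
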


The next example will show how to determine the signed Euler characteristic of a very affine variety $Y$. 
Recall that the Euler characteristic is a homotopy invariant and satisfies the following properties. 
%The \textit{inclusion-exclusion principle} says $\chi(M+N)=\chi(M)+\chi(N)-\chi(M\cap N)$. 
The Euler characteristic is \textit{additive} for algebraic varieties. More precisely, $\chi(X)=\chi(Z)+\chi(X\setminus Z)$, where $Z$ is a closed subvariety of $X$ (see \cite[Section 4.5]{FultonToric} for example).
The \textit{product property} says $\chi(M\times N)=\chi(M)\cdot\chi(N)$.
More generally, the \textit{fibration property} says that if $E\to B$ is a fibration with fiber $F$ then 
$\chi(E)=\chi(F)\cdot \chi(B)$ (see \cite[Section 9.3]{Spanier} for example). 
%{\color{red}Referee says, "These properties of the Euler characteristic need to provide the appropriate reference."  Can you put chapter or page number? }

\begin{example}\label{topologicalExample}
Consider $\vX$ from Example \ref{exampleDet22}. 
The variety $\vX$ has the parameterization shown below 
\[
\begin{array}{ccl} 
\mathbb{P}^{1}\times\mathbb{P}^{1} & \to & \vX\\
\left[x_{0}:x_{1}\right]\times\left[y_{0}:y_{1}\right] & \mapsto & 
\left[x_{0}y_{0}:x_{0}y_{1}:x_{1}y_{0}:x_{1}y_{1}:
x_0y_0+x_0y_1+x_1y_0+x_1y_1)\right].
\end{array}
\]
Let $\oX$ be the underlying very affine variety of $\vX$ and consider  
$\mathcal O:=\pp^{1}\backslash \left\{[0:1],[1:0],[1:-1] \right\}$
  the projective space with $3$ points removed. 
The very affine variety  
 $\oX$  has a parameterization: 
 \[
\begin{array}{ccl}
\mathcal O\times\mathcal O & \to & \oX\\
\left[x_{0}:x_{1}\right]\times\left[y_{0}:y_{1}\right] & \mapsto & 
\left[x_{0}y_{0}:x_{0}y_{1}:x_{1}y_{0}:x_{1}y_{1},
{(x_0+x_1)(y_0+y_1)}\right].
\end{array}
\]
Since $\chi(\pp^1)=2$, after removing 3 points, we have $\chi(\mathcal O)=-1$.
By the product property $\chi(\mathcal O\times \mathcal O)=1$, and hence $\chi(\oX)=1$.
Because $\oX$ is smooth,  by Huh's result, we conclude the ML degree of $\oX$ is $1$ as well.

We call $\vX$ the  variety of $2\times 2$   matrices with rank $1$ and restricting to the probability simplex yields the independence model. 
This example generalizes  by considering the map $\mathbb{P}^{m-1}\times \mathbb{P}^{n-1}\to \vX$ given by 
$([x_0:\dots:x_{m-1}],[y_0:\dots:y_{n-1}])\to [x_0y_0:\dots:x_{m-1}y_{n-1}:\sum_{i,j}x_iy_j]$. In this case, $\vX$ is  the variety of $m\times n$ rank $1$ matrices, and a similar computation shows that the ML degree is $1$ in these cases as well (see Example 1 of \cite{HRS14}). 
\end{example}

%{\color{red}Define ML degree for very affine varieties, and explain the two definitions are equivalent. (in appropriate place)}

\section{Whitney stratification, Gaussian degree, and Euler obstruction}
As we have just seen, for a smooth very affine variety, the ML degree is equal to the Euler characteristic up to a sign. This is not always the case, when the very affine variety is not smooth. 
%{\color{red} The referee didn't like the next sentence because, "The authors should elaborate on what they mean by in a more subtle way from Whitney stratifications."}
When the variety is singular, the ML degree is related to the Euler characteristic in a subtle way involving Whitney stratifications. 
This subtlety is explained in Corollary \ref{MLdegformula} by considering a weighted sum of the ML degrees of each strata.
Here, we give a brief introduction to the topological notions of Whitney stratification and Euler obstruction.

\subsection{Whitney stratification}

Many differential geometric notions do not behave well when a variety has singularities, for instance, tangent bundles and Poincare duality. 
This  situation is addressed  by stratifying  the singular variety into finitely many pieces, such that along each piece the variety is close to a smooth variety. 
A naive way to stratify a variety $X$ is taking the regular locus $X_{\text{reg}}$ as the first stratum, and then take the regular part of the singular locus of $X$, i.e., $(X_{\text{sing}})_{\text{reg}}$, and repeat this procedure. 
This naive stratification does not always reflect the singular behavior of a variety as seen in the Whitney umbrella.
Whitney introduced conditions (now called  \textit{Whitney regular}) on a stratification, where many differential geometric results can be generalized to singular varieties.  
 A \textit{Whitney stratification} satisfies the following technical condition of Whitney regular. The definition below is from \cite[Page 37]{GM83}. See also \cite[E3.7]{HTT08}. 
%{\color{magenta}
\begin{defn}
Let $X$ be an analytic subvariety of a complex manifold $M$. Let $X=\bigsqcup_{j\in A}S_j$ be a stratification of $X$ into finitely many locally closed submanifolds of $M$. This stratification is called a Whitney stratification if all the pairs $(S_\alpha, S_\beta)$ with $S_\alpha\subset \bar{S}_\beta$ are Whitney regular, which means the following. 
\begin{quote}
Suppose $x_i\subset S_\beta$ is a sequence of points converging to some point $y\in S_\alpha$. Suppose $y_i\subset S_\beta$ also converge to $y$, and suppose that (with respect to some local coordinate system on $M$) the secant lines $l_i=\overline{x_i y_i}$ converge to some limiting line $l$, and the tangent planes $T_{x_i}S_\beta$ converge to some limiting plane $\tau$. Then $l\subset \tau$. 
\end{quote}

\end{defn}

\begin{example}\label{WSexample}Here are some instances of Whitney stratifications. 
\begin{enumerate}
\item \cite[Theorem 19.2]{Whitney65} Every complex algebraic and analytic variety admits a Whitney stratification. 
\item If $X$ is a smooth variety, then the trivial stratification $X$ itself is a Whitney stratification. 
\item If $X$ has isolated singularities at $P_1, \ldots, P_l$, then the stratification $\{P_1\}\sqcup \ldots\sqcup \{P_l\}\sqcup X\setminus \{P_1, \ldots, P_l\}$ is a Whitney stratification. 
\item\label{openset}Since Whitney regular is a local condition, if $\bigsqcup_{j\in A}S_j$ is a whitney stratification of an algebraic variety $X$, and if $U$ is an open subset of $X$, then $\bigsqcup_{j\in A}(S_j\cap U)$ is a Whitney stratification of $U$. %{\color{red}Reference? Reasoning added. }
\item\label{wsSmooth}  If $X$ is a smooth algebraic variety, and $Z\subset X$ is a closed smooth subvariety, then the pair $\{Z, X\setminus Z\}$ is also a Whitney stratification of $X$.  
\item If $\bigsqcup_{i\in I}S_i$ is a Whitney stratification of $X$ and $\bigsqcup_{j\in J}T_j$ is a Whitney stratification of $Y$, then $\bigsqcup_{i\in I, j\in J}S_i\times T_j$ is a Whitney stratification of $X\times Y$. 
\end{enumerate}
\end{example}
%}
 
% For example, the singular locus of the Whitney umbrella $X=\{x^2=y^2z\}\subset \cc^3$ is the line $\{x=y=0\}$. However, $X$ is more singular at the origin than at a general point on the line $\{x=y=0\}$. A good stratification of the Whitney umbrella should have three stratum $X_{\text{reg}}$, $\{x=y=0, z\neq 0\}$ and $\{(0,0,0)\}$. 

% Suppose $X_r$ is the subvariety of $\pp^{mn}$ contained in the distinguished hyperplane $p_{11}+\cdots+p_{mn}-p_s=0$ parametrizing rank $\leq r$ matrices of size $m\times n$. Then, the naive stratification is indeed a Whitney stratification (see Proposition \ref{Whitney}). Moreover, the stratification is given by the rank of the corresponding matrix. 
% {\color{blue}
% For clarity, we prove this fact for $r=2$ in the proof of Proposition \ref{Whitney}. 
% This result relies on the following proposition, which can be used for other singular varieties with group actions. 
%  }
  
% {\color{blue}
\begin{prop}
Let $X$ be an algebraic variety. Suppose an algebraic group $\cG$ acts on $X$ with only finitely many orbits $O_1, \ldots, O_l$. 
Then $O_1\sqcup \ldots\sqcup O_l$ form a Whitney stratification of $X$. 
%Suppose $X$ is a variety with a stratification $\bigsqcup S_i$ and let  $\cG$ be a group acting on $X$. 
%Suppose $S_\alpha,S_\beta$ are two distinct strata that are orbits of the group $\cG$ on $X$ where $\bar S_\alpha\subset\bar S_\beta$
%Then, the pair $(S_\alpha,S_\beta)$  are Whitney regular. 
%if there exist $x_i\subset S_\beta$ a sequence of points converging to some $y\in S_\alpha$.  Further suppose there exist  $y_i\subset S_\beta$ also converges to $y$. In addition, suppose that (with respect to some local coordinate system on $X$) the secant lines $l_i=\overline{x_i y_i}$ converge to some limiting line $l$, and the tangent planes $T_{x_i}S_\beta$ converge to some limiting plane $\tau$. Then $l\subset \tau$. 
\end{prop}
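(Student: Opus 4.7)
The plan is to show that the orbit decomposition is itself a Whitney stratification, without further refinement. The three ingredients I would use are that orbits of algebraic group actions are smooth locally closed subvarieties, that Whitney regularity is invariant under the $\cG$-action, and that some Whitney refinement exists by Example \ref{WSexample}(1).

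First I would recall the classical fact that every orbit $O_i$ is a smooth, locally closed subvariety of $X$, and that its boundary $\overline{O_i}\setminus O_i$ is a union of orbits of strictly smaller dimension. Thus $\{O_1,\dots,O_l\}$ is already a legitimate stratification; what remains is to verify Whitney's condition (b) for every pair $(O_\alpha,O_\beta)$ with $O_\alpha\subset\overline{O_\beta}$. Fix such a pair and let $B\subset O_\alpha$ be the locus of points at which (b) fails.

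Two properties of $B$ drive the argument. First, $B$ is closed in $O_\alpha$ in the analytic topology, because Whitney regularity at a point is an open condition: a convergent sequence of ``bad'' witnessing data produces a bad limit. Second, $B$ is $\cG$-invariant. For each $g\in\cG$ the map $g\colon X\to X$ is an algebraic automorphism preserving the orbit decomposition; after picking local analytic embeddings of $X$ into affine space near $y$ and near $g\cdot y$, $g$ extends to a local biholomorphism of these ambient neighborhoods, transporting secant lines and tangent planes of strata at $y$ to the corresponding data at $g\cdot y$. Hence condition (b) holds at $y$ if and only if it holds at $g\cdot y$.

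To conclude, I would rule out $B=O_\alpha$ using the existence of some Whitney refinement. By Example \ref{WSexample}(1), $X$ admits a Whitney stratification, and by a standard refinement procedure this can be arranged to refine $\{O_1,\dots,O_l\}$ with every refined stratum open in the orbit containing it. Then $O_\alpha$ and $O_\beta$ each contain a dense open refined stratum $T_\alpha$ and $T_\beta$, and for any $y\in T_\alpha$ the Whitney condition for $(T_\alpha,T_\beta)$ coincides with that for $(O_\alpha,O_\beta)$ because $T_\beta$ is open in $O_\beta$ and the relevant tangent planes agree. Hence $B\cap T_\alpha=\emptyset$, so $B\subsetneq O_\alpha$. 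Since $\cG$ acts transitively on $O_\alpha$ and $B$ is $\cG$-invariant, $B=\emptyset$. The main subtlety will be justifying the $\cG$-invariance of condition (b) carefully: the definition from \cite{GM83} is phrased in terms of a local coordinate system on an ambient manifold on which $\cG$ does not a priori act, so one must spell out how the action extends to ambient neighborhoods and preserves the secant/tangent plane convergence data. The rest of the argument is formal once this is in place.
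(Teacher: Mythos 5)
Your argument is essentially the paper's: both rest on (i) generic Whitney regularity of the pair $(O_\alpha,O_\beta)$ along a dense open subset of $O_\alpha$, (ii) invariance of condition (b) under the $\cG$-action, and (iii) transitivity of $\cG$ on $O_\alpha$ to propagate regularity everywhere (the paper phrases this as $\bigcup_{\tau\in\cG}\tau(U)=O_\alpha$; you phrase it as the $\cG$-invariant bad locus $B$ being empty once it is proper -- note that for this step you do not even need $B$ closed, only $\cG$-invariant and proper).

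The one place you diverge is in how you obtain the dense open good set: the paper simply invokes Whitney's Lemma 19.3, whereas you try to extract it from an ambient Whitney stratification refining the orbits. As written that step has a hole. First, ``every refined stratum open in the orbit containing it'' cannot be arranged (only the generic refined stratum in each orbit is open in that orbit); what you can arrange is that each refined stratum lies in a single orbit. Second, and more substantively, Whitney regularity of $(T_\alpha,T_\beta)$ at $y$ only controls sequences $x_i\in T_\beta$; to conclude $(O_\alpha,O_\beta)$ is regular at $y$ you must also handle sequences $x_i\in O_\beta\setminus T_\beta$. This is fixable -- pass to a subsequence lying in a single refined stratum $T_\gamma\subset O_\beta$, use regularity of $(T_\alpha,T_\gamma)$, and observe $T_{x_i}T_\gamma\subseteq T_{x_i}O_\beta$ so the limiting containment $l\subseteq\tau$ survives -- but it needs to be said. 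Citing Whitney's Lemma 19.3 directly, as the paper does, avoids the detour entirely.
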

\begin{proof}
First of all, we need to show that each $O_i$ is a locally closed smooth subvariety of $X$. Let $f: \cG\times X\to X$ be the group action map. Then each orbit is of the form $f(\cG\times \{point\})$, and hence is a constructible subset of $X$. On the other hand, since $\cG$ acts transitively on each $O_i$, as constructible subsets of $X$, $O_i$ are isomorphic to smooth varieties. This implies that if $\dim O_i=\dim X$, then $O_i$ is open in $X$. Therefore, $X\setminus \bigcup_{\dim O_i=\dim X}O_i$ is a closed (possibly reducible) subvariety of $X$. Now, $\cG$ acts on $X\setminus \bigcup_{\dim O_i=\dim X}O_i$ with finite orbits. Thus, we can use induction to conclude that each $O_i$ is a locally closed smooth subvariety of $X$. 

Without loss of generality, we assume that $O_1\subset \bar{O}_2$, and we need to show that the pair $O_1, O_2$ is Whitney regular. Moreover, by replacing $X$ by $\bar{O}_2$, we can assume that $O_2$ is open in $X$. By \cite[Lemma 19.3]{Whitney65}, there exists an open subvariety $U$ of $O_1$ such that the pair $U, O_2$ is Whitney regular. Given any $\tau\in \cG$, the action $\tau: \bar{O}_2\to \bar{O}_2$ is an algebraic map, which preserves $O_2$. Thus, the pair $\tau(U), O_2$ is also Whitney regular. Since $\cG$ acts transitively on $O_1$, 
$$\bigcup_{\tau\in \cG}\tau(U)=O_1.$$
Therefore, the pair $O_1, O_2$ is Whitney regular. 
\end{proof}
Consider $\mathbb{P}^{mn-1}=\{[a_{ij}]_{1\leq i\leq m, 1\leq j\leq n}\}$ as the projective space of $m\times n$ matrices. The left $Gl(m, \cc)$ action and the right $Gl(n, \cc)$ action on $\mathbb{P}^{mn-1}$ both preserve the rank of the matrices. 
Moreover, the orbits of the total action of $Gl(m, \cc)\times Gl(n, \cc)$ are the matrices with fixed rank. Therefore, the stratification by rank gives a Whitney stratification of the subvariety $X_r:=\{[a_{ij}]|\rank\left([a_{ij}]\right)\leq r\}$. In particular, we have the following corollary. 
\begin{cor}\label{WhitneyS}
Define $X:=\{[a_{ij}]|\rank\left([a_{ij}]\right)\leq 2\}$ and $Z:=\{[a_{ij}]|\rank\left([a_{ij}]\right)\leq 1\}$. Then the stratification $Z, X\setminus Z$ is a Whitney stratification of $X$. 
\end{cor}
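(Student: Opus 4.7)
The plan is to apply the preceding proposition to the group action by $\mathcal G := Gl(m,\cc)\times Gl(n,\cc)$ on the projective space $\mathbb{P}^{mn-1}$ of $m\times n$ matrices. First I would check that the action $(A,B)\cdot [M] = [A M B^T]$ preserves the rank of a representative matrix, so it restricts to an action on the closed subvariety $X = \{[a_{ij}] : \rank [a_{ij}] \le 2\}$.

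Next I would identify the orbits of $\mathcal G$ on $X$. A standard linear algebra fact (essentially rank normal form, or Smith normal form over $\cc$) tells us that any two matrices of the same rank $r$ can be transformed into one another by multiplying on the left by some $A \in Gl(m,\cc)$ and on the right by some $B \in Gl(n,\cc)$. Consequently, two matrices in $X$ lie in the same $\mathcal G$-orbit if and only if they have the same rank. Since the zero matrix is excluded from projective space, the possible ranks on $X$ are just $1$ and $2$, giving exactly two orbits: the rank-one locus (which is $Z$) and the rank-two locus (which is $X \setminus Z$).

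Finally, since $\mathcal G$ acts on $X$ with only finitely many orbits, the preceding proposition applies and yields that $\{Z,\, X \setminus Z\}$ is a Whitney stratification of $X$. I do not foresee any real obstacle here, as the verification is essentially a direct specialization of the general statement to $r = 2$; the only content is the orbit computation, which is a textbook fact.
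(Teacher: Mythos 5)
Your proposal is correct and follows exactly the route the paper takes: the paragraph preceding the corollary applies the proposition on group actions with finitely many orbits to the $Gl(m,\cc)\times Gl(n,\cc)$ action on $\mathbb{P}^{mn-1}$, identifies the orbits as the fixed-rank loci via rank normal form, and concludes that the rank stratification of $X_r$ is Whitney. Nothing is missing.
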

%}

%{\color{blue} The above implies that the naive stratification for rank $r$ matrices is a Whitney stratification.}

We use Whitney stratifications in  Corollary \ref{MLdegformula}. We will see, up to a sign, the ML degree of a singular variety is equal to the Euler characteristic  with some correction terms. 
The correction terms are linear combinations of the ML degree of smaller dimensional strata of the Whitney stratification, whose coefficients turn out to be the Euler obstructions. 

%The next section introduces the Euler obstruction. This is a correction term that allows use techniques like the ones in Example \ref{topologicalExample} on singular varieties. 

%In the previous section, 
\subsection{Gaussian degrees}
We have defined the notion of maximum likelihood degree of a projective variety. Sometimes, it is more convenient to restrict the projective variety to some affine torus and consider the notion of maximum likelihood degree of a subvariety of the affine torus. 

%{\color{red} Jose doesn't like $Y$ denoting a very affine variety. $X,Y,Z$ are related to projective varieties. $X^o,Y^o,Z^o$ are associated to very affine varieties. How about we use $\mathfrak X$ instead of $Y$ here? }
%$\{\mathbf Y\mathcal Y\mathfrak Y\mathsf Y\mathbb Y\}$%\mathbbm y%\mathscr Y

We call a closed irreducible subvariety of an affine torus $(\cc^*)^n$ a \textit{very affine variety}. Denote the coordinates of $(\cc^*)^n$ by $z_1, z_2, \ldots, z_n$. The likelihood functions in the affine torus $(\cc^*)^n$ are of the form 
$$l_u=z_1^{u_1}z_2^{u_2}\cdots z_n^{u_n}.$$
\begin{defn}
Let $\mfX\subset (\cc^*)^n$ be a very affine variety. Define the maximum likelihood degree of $\mfX$, denoted by $\MLdegY(\mfX)$, to be the number of critical points of a likelihood function $l_u$ for general $u_1, u_2, \ldots, u_n$. 
\end{defn}
%{\color{red}We can put the argument that $\MLdegX(X)=\MLdegY(\oX)$ in the previous section here. }
Fix an embedding of $\pp^n\to \pp^{n+1}$ by $[p_0: p_1: \ldots: p_n]\mapsto [p_0: p_1: \ldots: p_n: p_0+p_1+\cdots+p_n]$. Given a projective variety $X\subset \pp^{n}$, we can consider it as a subvariety of $\pp^{n+1}$ by the embedding we defined above. Then as a subvariety of $\pp^{n+1}$, $X$ is contained in the hyperplane $p_0+p_1+\cdots+p_n-p_s=0$. 

Consider $(\cc^*)^{n+1}$ as an open subvariety of $\pp^{n+1}$, given by the open embedding 
%{\color{brown}
$$(z_0, z_1, \ldots, z_n)\mapsto [z_0: z_1:\ldots: z_n: 1].$$%}
Now, for the projective variety $X\subset \pp^n$, we can embed $X$ into $\pp^{n+1}$ as described above, and then take the intersection with $(\cc^*)^{n+1}$. Thus, we obtain a very affine variety, which we denote by $\oX$. 

\begin{lemma}\label{projectiveveryaffine}
The ML degree of $X$ as a projective variety is equal to the ML degree of $\oX$ as a very affine variety, i.e. 
\begin{equation}
\MLdegX(X)=\MLdegY(\oX). 
\end{equation}
\end{lemma}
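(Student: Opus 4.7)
The plan is to show that the critical points defining the two ML degrees are literally the same set, by working in the affine chart $p_s = 1$ of $\pp^{n+1}$.

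First I would observe that since $u_0+u_1+\cdots+u_n+u_s = 0$ by the definition of $u_s$, the likelihood function $\ell_u(p) = p_0^{u_0}\cdots p_n^{u_n} p_s^{u_s}$ is homogeneous of total degree zero. Consequently, its logarithmic differential $d\log \ell_u = \sum_{i=0}^n u_i \frac{dp_i}{p_i} + u_s \frac{dp_s}{p_s}$ descends to a well-defined meromorphic $1$-form on $\pp^{n+1}$, regular on the complement of the coordinate hyperplanes. A point $p \in X$ is a critical point of $\ell_u$ on $X$ precisely when $p$ is smooth on $X$, has all coordinates nonzero, and $d\log\ell_u|_p$ restricts to zero on $T_pX$; this is the invariant reformulation of the gradient-orthogonality condition given in the definition.

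Next I would use the open embedding $(\cc^*)^{n+1} \hookrightarrow \pp^{n+1}$, $(z_0,\dots,z_n) \mapsto [z_0:\cdots:z_n:1]$, which realizes $\oX = X \cap (\cc^*)^{n+1}$ as the locus of points of $X$ with all coordinates nonzero (note that the coordinate $p_s$ is nonzero on $\oX$ automatically, and the relation $p_s = p_0+\cdots+p_n$ means $\oX$ is cut out inside $(\cc^*)^{n+1}$ by the same equations defining $X$ dehomogenized at $p_s = 1$, together with the implicit condition $z_0+\cdots+z_n \neq 0$, which is $p_s \neq 0$, automatic on the torus). In this chart $p_s \equiv 1$, so $dp_s = 0$, and $d\log\ell_u$ restricts to $\sum_{i=0}^n u_i \frac{dz_i}{z_i}$, which is precisely the logarithmic derivative of the very affine likelihood function $l_u = z_0^{u_0}\cdots z_n^{u_n}$. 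Since the embedding is an isomorphism onto its image and identifies the tangent spaces of $\oX$ inside $(\cc^*)^{n+1}$ with the tangent spaces of $X$ at the corresponding smooth points inside the distinguished hyperplane, the very affine critical point condition for $l_u$ on $\oX$ matches the projective critical point condition for $\ell_u$ on $X$ term by term.

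Finally, the critical points of $\ell_u$ on $X$ all have nonzero coordinates by definition, so they all lie in the chart $p_s \neq 0$, and under the chart identification they correspond bijectively to the critical points of $l_u$ on $\oX$. Taking $u$ generic on both sides (generic $(u_0,\dots,u_n)$ in the projective setting corresponds to generic $(u_1,\dots,u_n)$ together with the determined $u_s$ in the very affine setting, and genericity is preserved) yields the desired equality $\MLdegX(X) = \MLdegY(\oX)$. The only mild subtlety, and thus the step I would double-check most carefully, is the tangent-space identification at singular-versus-smooth points and the verification that the regular loci match under the chart; but since the chart is an open immersion, smoothness and tangent spaces are preserved, so no genuine obstacle arises.
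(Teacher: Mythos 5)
Your argument is correct and follows essentially the same route as the paper: the paper likewise uses degree-zero homogeneity to rewrite $\ell_u$ as $(p_0/p_s)^{u_0}\cdots(p_n/p_s)^{u_n}$, observes that this function coincides with $z_0^{u_0}\cdots z_n^{u_n}$ on $\oX$ in the chart $p_s=1$, and concludes the critical points agree. Your version merely spells out the log-differential and tangent-space identifications that the paper leaves implicit.
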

\begin{proof}
Fix general $u_0, u_1, \ldots, u_n\in \cc$. The ML degree of $X$ is defined to be the number of  critical points of the likelihood function $(p_0/p_s)^{u_0}(p_1/p_s)^{u_1}\cdots(p_n/p_s)^{u_n}$. The ML degree of $\oX$ is defined to be the number of critical points of $z_0^{u_0}z_1^{u_1}\cdots z_n^{u_n}$. The two functions are equal on $\oX$. Therefore, they have the same number of critical points. 
\end{proof}

For the rest of this section, by maximum likelihood degree we always mean maximum likelihood degree of very affine varieties. 

As observed in \cite{BW15}, the maximum likelihood degree is equal to the Gaussian degree defined by Franecki and Kapranov \cite{FK00}. The main theorem of \cite{FK00} relates the Gaussian degree with Euler characteristics. In this section, we will review their main result together with the explicit formula from \cite{EM99} to compute characteristic cycles. 

First, we follow the notation in \cite{BW15}. Fix a positive integer $n$ for the dimension of the ambient space of the very affine variety. 
Denote the affine torus $(\cc^*)^n$ by $G$ and denote its Lie algebra by $\mathfrak{g}$. Let $T^*G$ be the cotangent bundle of $G$.  Thus, $T^*G$ is a $2n$ dimensionally ambient space with a canonical symplectic structure.
 For any $\gamma\in \mathfrak{g}^*$, let $\Omega_\gamma\subset T^*G$ be the graph of the corresponding left invariant 1-form on $G$. 

Suppose $\Delta\subset T^*G$ is a Lagrangian subvariety of $T^*G$. For a generic $\gamma\in \mathfrak{g}^*$, the intersection $\Delta\cap \Omega_\gamma$ is transverse and consists of finitely many points. 
The number of points in $\Delta\cap \Omega_\gamma$ is constant when $\gamma$ is contained in a nonempty Zariski open subset of $\mathfrak{g}^*$. This number is called the Gaussian degree of $\Delta$, and denoted by $\gdeg(\Delta)$. 

Let $\mfX\subset G$ be an irreducible closed subvariety. 
Denote the conormal bundle of $\mfX_{\text{reg}}$ in $G$ by $T^*_{\mfX_{\text{reg}}}G$, and denote its closure in $T^*G$ by $T_\mfX^*G$. Then $T_\mfX^*G$ is an irreducible conic Lagrangian subvariety of $T^*G$. 
In the language of  \cite{HS2014}, the $T^*_{\mfX}$ is the likelihood correspondence.
Given any $\gamma\in \mathfrak{g}^*$, the left invariant 1-form corresponding to $\gamma$ degenerates at a point $P\in \mfX$ if and only if $T_\mfX^*G\cap \Omega_\gamma$ contains a point in $T_P^*G$. 
Thus, we have the following lemma. 

\begin{lemma}[\cite{BW15}]
Let $\mfX\subset G$  be an irreducible closed subvariety 
where $G=(\mathbb{C}^{*})^n$. Then,
\begin{equation}\label{useless1}
\MLdegY(\mfX)=\gdeg(T^*_\mfX G).
\end{equation}
\end{lemma}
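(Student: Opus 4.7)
The plan is to unwind both sides to a common geometric statement inside $T^*G$, identify critical points of $l_u$ with intersection points of $\Omega_\gamma$ with the conormal of $\mfX_{\text{reg}}$, and then reconcile the two notions of genericity. Let $\mathfrak{g}$ denote the Lie algebra of $G=(\cc^*)^n$, trivialized by the left-invariant vector fields $z_i\,\partial/\partial z_i$, with dual basis consisting of the left-invariant $1$-forms $dz_i/z_i$. For $u=(u_1,\ldots,u_n)\in\cc^n$ let $\gamma_u\in \mathfrak{g}^*$ be the element with these coordinates; the corresponding left-invariant $1$-form is
\[
\omega_u \;=\; \sum_{i=1}^n u_i\,\frac{dz_i}{z_i} \;=\; \frac{dl_u}{l_u},
\]
so that $\Omega_{\gamma_u}\subset T^*G$ is the graph of $\omega_u$ and, in particular, is a section of $T^*G\to G$.

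First I would establish the point-set bijection. A regular point $P\in\mfX_{\text{reg}}$ is a critical point of $l_u|_\mfX$ precisely when $dl_u$ vanishes on $T_P\mfX$. Since $l_u$ is nowhere zero on $G$, this is equivalent to $\omega_u(P)$ annihilating $T_P\mfX\subset T_PG$, i.e.\ to $(P,\omega_u(P))\in T^*_{\mfX_{\text{reg}}}G$. Because $\Omega_{\gamma_u}$ is a section, the assignment $P\mapsto (P,\omega_u(P))$ is a bijection between critical points of $l_u|_\mfX$ in $\mfX_{\text{reg}}$ and points of $\Omega_{\gamma_u}\cap T^*_{\mfX_{\text{reg}}}G$.

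Next I would match the two notions of genericity. By definition $\MLdegY(\mfX)$ counts the left-hand side for $u$ in a nonempty Zariski open $U_1\subset\cc^n$, while $\gdeg(T^*_\mfX G)$ counts $\#\bigl(\Omega_\gamma\cap T^*_\mfX G\bigr)$ for $\gamma$ in a nonempty Zariski open of $\mathfrak{g}^*$; under the identification $u\leftrightarrow\gamma_u$ the latter pulls back to a nonempty Zariski open $U_2\subset\cc^n$. For $u\in U_1\cap U_2$ the bijection above compares the two counts, but only modulo possible excess intersection points of $\Omega_{\gamma_u}$ with the boundary $B:=T^*_\mfX G\setminus T^*_{\mfX_{\text{reg}}}G$.

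This excess-contribution check is the main obstacle, and I would dispose of it by a dimension argument. The set $B$ is a proper closed subvariety of the irreducible $n$-dimensional Lagrangian $T^*_\mfX G$, so $\dim B\leq n-1$. Under the canonical left-invariant trivialization $T^*G\cong G\times\mathfrak{g}^*$, the graph $\Omega_\gamma$ is the slice $G\times\{\gamma\}$, and the projection $B\to\mathfrak{g}^*$ has image of dimension at most $n-1$ inside the $n$-dimensional $\mathfrak{g}^*$. Hence a generic $\gamma$ lies in the complement of this image, forcing $\Omega_\gamma\cap B=\emptyset$. Intersecting this third open condition with $U_1\cap U_2$ yields a nonempty Zariski open of $u$ on which both counts equal the cardinality of the bijection, proving $\MLdegY(\mfX)=\gdeg(T^*_\mfX G)$.
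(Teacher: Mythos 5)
Your proposal is correct and follows essentially the same route as the paper, which records only the key observation that the left-invariant $1$-form $\omega_u=dl_u/l_u$ degenerates at $P\in\mathcal X$ exactly when $\Omega_{\gamma_u}\cap T^*_{\mathcal X}G$ meets $T^*_PG$, and otherwise defers to the cited reference. Your added care in matching the two genericity conditions and, in particular, the dimension count showing that a generic section $\Omega_\gamma$ misses the boundary $T^*_{\mathcal X}G\setminus T^*_{\mathcal X_{\text{reg}}}G$ (so that the Gaussian degree counts no spurious points over $\mathcal X_{\text{sing}}$) supplies precisely the detail the paper leaves implicit.
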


Let $\cF$ be a bounded constructible complex on $G$ and let $CC(\cF)$ be its characteristic cycle. Then $CC(\cF)=\sum_{1\leq j\leq l} n_j\cdot[\Delta_j]$ is a $\zz$-linear combination of irreducible conic Lagrangian subvarieties in the cotangent bundle $T^*G$. The Gaussian degree and Euler characteristic are related by the following theorem. 

%{\color{red} What does the index $j$ go over? Shouldn't we see a Whitney stratification in this formula?}
\begin{thm}[\cite{FK00}]
With the notation above, we have
\begin{equation}\label{useless2}
\chi(G, \cF)=\sum_{1\leq j\leq l} n_j\cdot \gdeg(\Delta_j).
\end{equation}
\end{thm}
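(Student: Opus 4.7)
The plan is to derive this identity from the microlocal index theorem of Kashiwara--MacPherson--Dubson, which computes $\chi(M,\cF)$ for a bounded constructible complex $\cF$ on a smooth complex algebraic variety $M$ as a signed count of intersection points between $CC(\cF)$ and the graph of a sufficiently generic 1-form on $M$. The torus $G=(\cc^*)^n$ is parallelizable: every $\gamma\in\mathfrak{g}^*$ produces a nowhere-vanishing left-invariant 1-form whose graph $\Omega_\gamma\subset T^*G$ supplies precisely the kind of generic section the index formula needs. This canonical family $\{\Omega_\gamma\}_{\gamma\in\mathfrak{g}^*}$ is what makes the torus case admit such a clean answer.

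First I would establish the identity $\chi(G,\cF)=CC(\cF)\cdot[\Omega_\gamma]$ for $\gamma$ in a dense Zariski open subset of $\mathfrak{g}^*$, where the right hand side denotes the signed intersection number in $T^*G$. This reduces to the general index theorem: for any conic Lagrangian cycle $\Delta$ in $T^*M$ and any 1-form $s$ on $M$ whose graph meets $\Delta$ transversally in finitely many points, the intersection number $[\Delta]\cdot[\mathrm{graph}(s)]$ is well defined, and when $\Delta=CC(\cF)$ it equals $\chi(M,\cF)$. Once this step is in place, I would decompose $CC(\cF)=\sum_j n_j[\Delta_j]$ and invoke bilinearity of intersection together with the definition of Gaussian degree: for generic $\gamma$ each $\Delta_j$ meets $\Omega_\gamma$ transversally in exactly $\gdeg(\Delta_j)$ points, and since $\Delta_j$ and $\Omega_\gamma$ are complex Lagrangians in a holomorphic symplectic manifold every local intersection multiplicity is $+1$. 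Combining these observations gives
\[
\chi(G,\cF)=CC(\cF)\cdot[\Omega_\gamma]=\sum_{j=1}^{l}n_j\cdot\gdeg(\Delta_j).
\]

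The main obstacle is justifying the index formula on the non-compact torus $G$: the classical Kashiwara statement is cleanest on a compact manifold, and here $\Omega_\gamma$ heads off to infinity inside $T^*G$. My preferred route is to compactify $G$ inside $(\pp^{1})^{n}$, extend $\cF$ by zero across the boundary divisor, and observe that the left-invariant form $\omega_\gamma$ extends to a logarithmic 1-form whose graph avoids the conormals of the boundary strata for generic $\gamma$; the microlocal index formula applied on the compactification then receives no boundary contribution, and so its left hand side recovers $\chi(G,\cF)$ while its right hand side recovers the intersection number computed inside $T^*G$. An alternative is to invoke the constructible-sheaf index theorem directly in the version already formulated for arbitrary smooth algebraic varieties under a generic transversality hypothesis on the 1-form, a hypothesis the nowhere-vanishing forms $\omega_\gamma$ satisfy by a dimension count on each irreducible component of $CC(\cF)$ since each $\Delta_j$ is conic.
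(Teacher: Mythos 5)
The paper does not actually prove this statement: it is quoted directly from Franecki--Kapranov \cite{FK00}, so the only meaningful comparison is with the argument in that reference. Your plan --- reduce to the Kashiwara--Dubson--MacPherson microlocal index formula, expand $CC(\cF)=\sum_j n_j[\Delta_j]$ and use bilinearity of the intersection product, observe that transverse intersections of complex Lagrangians each contribute $+1$, and then control the behavior at infinity using the special structure of the torus --- is precisely the strategy of \cite{FK00}, so in outline you have reconstructed the right proof.

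The gap is exactly at the step you flag as the main obstacle, and it is not cosmetic: that step is where the entire content of the theorem lives. The form $\omega_\gamma=\sum_i\gamma_i\,dz_i/z_i$ does not extend to a regular $1$-form on a toric compactification $\bar G\supset G$; it acquires logarithmic poles along the boundary divisor $D$, so the phrase ``its graph avoids the conormals of the boundary strata'' has no literal meaning inside $T^*\bar G$. The correct device is the logarithmic cotangent bundle $T^*\bar G(\log D)$, in which $\omega_\gamma$ does extend to a nowhere-vanishing section; one must then show that the closure of each conic Lagrangian $\Delta_j$ in $T^*\bar G(\log D)$ meets the fibers over $D$ only where certain residue coordinates are constrained to a proper subvariety, so that a generic $\gamma$ produces no intersection points over the boundary. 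That residue analysis is the actual work of \cite{FK00}. Your fallback route --- invoking an index theorem ``already formulated for arbitrary smooth algebraic varieties under a generic transversality hypothesis'' --- does not exist in the generality required: transversality of $\Omega_\gamma$ with each $\Delta_j$ at finite distance does not prevent intersection points from escaping to infinity as one deforms $\gamma$ to the zero section, and ruling that out is exactly where the group structure of $G$ is used. So the architecture of your argument is correct and matches the cited source, but the decisive step still needs to be supplied rather than asserted.
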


\subsection{Euler obstructions}
The \textit{Euler obstructions} are defined to be the coefficients of some characteristic cycle decomposition (see equation (\ref{useless3})), and it is a theorem of Kashiwara that they can be computed as the Euler characteristic of a complex link (see Theorem \ref{Kashiwara}). 

Let $\bigsqcup_{j=1}^k S_j$ be a Whitney stratification of the very affine variety $\mfX$ with $S_1=Y_{\text{reg}}$. 
Let $e_{j1}$ be the Euler obstruction of the pair $(S_j, \mfX_{\text{reg}})$, which measures the singular behavior of $\mfX$ along $S_j$. More precisely, $e_{j1}$ are defined such that the following equality holds (See \cite[1.1]{EM99} for more details). 
\begin{equation}\label{useless3}
CC(\cc_{S_1})=e_{11}[T^*_{\mfX_{\text{reg}}}G]+\sum_{2\leq j\leq k}e_{j1}[T^*_{S_j}G].
\end{equation}
For example, $e_{11}=(-1)^{\dim \mfX}$. 

Since $\chi(S_1)=\chi(G, \cc_{S_1})$, combining \cref{useless1,useless2,useless3}, we have the following corollary that expresses $\chi(S_1)$  as a weighted sum of ML degrees.

\begin{cor}\label{MLdegformula}
Let $\bigsqcup_{j=1}^k S_j$  %$\{S_1=Y_{\text{reg}}, S_2,\ldots, S_k\}$ 
be a Whitney stratification of $\mfX$ with $S_1=\mfX_{\text{reg}}$. Then,
%{\color{red}Can we change the corollary index to Corollary 11, instead of Corollary 10.1?}
%$$\chi(S_1)=(-1)^{\dim Y}\MLdegY(Y)+\sum_{2\leq j\leq k}e_{j1}\MLdegY(\bar{S}_j)$$
%\begin{equation}\label{MLsum}
$$\chi(\mfX_{\text{reg}})=e_{11}\MLdegY(\mfX)+\sum_{2\leq j\leq k}e_{j1}\MLdegY(\bar{S}_j),$$
%\end{equation}
%
where $\bar{S}_j$ denotes the closure of $S_j$ in $\mfX$ %% Is the closure in Y??
 and $e_{j1}$ is the Euler obstruction of the pair $(S_j, S_1)$.
\end{cor}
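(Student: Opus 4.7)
The statement is a direct consequence of combining the three displayed formulas \eqref{useless1}, \eqref{useless2}, and \eqref{useless3} that precede it, so my plan is essentially to chain them together carefully, keeping track of which Lagrangian cycle is which.

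The plan is to start from the left-hand side and compute $\chi(\mfX_{\text{reg}}) = \chi(S_1)$ using a sheaf-theoretic expression. Since $S_1$ is a locally closed smooth subvariety of $G$ and $\chi$ of a constructible sheaf can be computed either as ordinary Euler characteristic or as hypercohomology, we have the identity $\chi(S_1) = \chi(G, \cc_{S_1})$, where $\cc_{S_1}$ denotes the extension by zero of the constant sheaf on $S_1$. This is the input to the Franecki--Kapranov formula \eqref{useless2}.

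Next I would apply \eqref{useless2} to the constructible complex $\cF = \cc_{S_1}$, using the characteristic cycle decomposition \eqref{useless3}:
\begin{equation*}
CC(\cc_{S_1})=e_{11}[T^*_{\mfX_{\text{reg}}}G]+\sum_{2\leq j\leq k}e_{j1}[T^*_{S_j}G].
\end{equation*}
Plugging this decomposition into \eqref{useless2} yields
\begin{equation*}
\chi(\mfX_{\text{reg}}) = e_{11}\cdot \gdeg(T^*_{\mfX_{\text{reg}}}G)+\sum_{2\leq j\leq k}e_{j1}\cdot \gdeg(T^*_{S_j}G).
\end{equation*}
So the remaining task is purely bookkeeping: express each Gaussian degree appearing here as the ML degree of the appropriate very affine variety.

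The final step is to invoke \eqref{useless1}. For the first term, since $\mfX_{\text{reg}}$ is Zariski open and dense in $\mfX$, the closures of the conormal bundles in $T^*G$ agree: $\overline{T^*_{\mfX_{\text{reg}}}G} = T^*_\mfX G$, so $\gdeg(T^*_{\mfX_{\text{reg}}}G) = \MLdegY(\mfX)$. For each $j \geq 2$, the stratum $S_j$ is smooth by definition of a Whitney stratification, and $\bar{S}_j$ is an irreducible closed subvariety whose regular locus contains $S_j$ as a dense open subset; hence $T^*_{S_j}G$ equals $T^*_{\bar{S}_j}G$ in the Chow group of Lagrangian cycles, giving $\gdeg(T^*_{S_j}G) = \MLdegY(\bar{S}_j)$. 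Substituting these two identities into the previous display produces exactly the claimed formula.

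The main conceptual obstacle, which is already handled by the machinery imported from \cite{FK00,BW15,EM99}, is the compatibility between the topological Euler characteristic of $S_1$ and the symplectic-geometric count $\gdeg$; once one accepts \eqref{useless1}, \eqref{useless2}, \eqref{useless3}, the argument is just a one-line algebraic manipulation, and the only thing to watch is the identification of conormal-bundle closures with conormal cycles of the (not necessarily smooth) closures $\bar{S}_j$.
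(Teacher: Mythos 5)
Your proposal is correct and is essentially the paper's own argument: the paper derives the corollary in one line by noting $\chi(S_1)=\chi(G,\cc_{S_1})$ and then chaining \eqref{useless1}, \eqref{useless2}, and \eqref{useless3} exactly as you do. Your extra remark identifying the closures of the conormal bundles $T^*_{S_j}G$ with the conormal cycles of the closures $\bar{S}_j$ is the right bookkeeping point and is implicitly assumed in the paper.
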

%
%For $k=2$, Corollary \ref{MLdegformula} says
%$\MLdegY(Y)=(-1)^{\dim Y}\left(\chi(S_1)-e_{21}\MLdegY(\bar{S}_2)\right)$. %% k=2  terms.

%$\MLdegY(Y)=(-1)^{\dim Y}\left(\chi(S_1)-\sum_{2\leq j\leq k}e_{j1}\MLdegY(\bar{S}_j)\right).$ %%k terms

Even though the abstract definition of Euler obstruction uses characteristic cycles, there is a concrete topological formula computing Euler obstructions due to Kashiwara. Here we recall that the Euler characteristic with compact support of a topological space $M$ is defined by:
$$\chi_c(M)=\sum_{i\geq 0}\dim H^i_c(M, \mathbb{Q})$$
where $H^i_c$ is the $i$-th cohomology with compact support. When $M$ is a $d$-dimensional (possibly non-compact) orientable manifold, the Poincar\'e duality of $M$ implies that
$$\dim H^i(M, \mathbb{Q})=\dim H^{d-i}_c(M, \mathbb{Q})$$
and hence $\chi_c(M)=(-1)^d\chi(M)$. 

%The formula uses the notion 'normal slice'. A normal slice $N$ of a Whitney stratum $S_j$ in $X$ at the point $z\in S_j$ is defined to be $X\cap V$, where we embed a neighborhood of $X$ at $z$ to an affine space $W$, and $V$ is an affine subspace of $W$ with $\dim V+\dim S_j=\dim W$, which intersects $S_j$ transversally at $z$. 
%(see \cite[Theorem 1.1]{EM99}, \cite[Page 100]{Dim04} and \cite[8.1]{Gin86}). %We will not state the general formula to compute Euler obstruction of a pair of strata. Instead, we will apply the formula of \cite[Theorem 1.1]{EM99} to compute the Euler obstruction $e_{(mn)}$ of the pair . 

\begin{thm}[Kashiwara\footnote{Here the formula is in the form of \cite[Theorem 1.1]{EM99}, see also \cite[Page 100]{Dim04} and \cite[8.1]{Gin86}.}]\label{Kashiwara}
Under the above notations, fix a point $z\in S_j$, and an embedding of a neighborhood of $z$ in $X$ to an affine space $W$. Let $V$ be an affine subspace of $W$ with $\dim V+\dim S_j=\dim W$, which intersects $S_j$ transversally at $z$. 

Then the Euler obstruction can be computed using Euler characteristic with compact support:
\begin{equation}\label{Eulerformula}
 e_{j1}
=(-1)^{\dim Z+1}\chi_c\left(B\cap Y_{\text{reg}}\cap \phi^{-1}(\epsilon)\right)
\end{equation}
where $B$ is a ball of radius $\delta$ in $W$ centered at $z$, $\phi$ is a general linear function on $V$ vanishing at $z$ and $0< |\epsilon|\ll \delta\ll1$. 
\end{thm}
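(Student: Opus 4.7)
The plan is to reduce the computation of $e_{j1}$, which is defined abstractly as the coefficient of $[T^*_{S_j}G]$ in $CC(\cc_{S_1})$, to a purely topological quantity computable on a transverse slice. The microlocal index theorem supplies the bridge: for a constructible complex $\cF$ on $G$, the multiplicity of $CC(\cF)$ along an irreducible conic Lagrangian $T^*_S G$ at a generic smooth point of $S$ can be read off from the normal Morse data of $\cF$.

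First I would localize to a neighborhood of the chosen point $z\in S_j$ and restrict to the transverse affine slice $V$. Because $\dim V + \dim S_j = \dim W$ and $V$ meets $S_j$ transversally at $z$, the slice $V$ is non-characteristic for $T^*_{S_j}G$ near $z$. The non-characteristic restriction formula for characteristic cycles then identifies the multiplicity $e_{j1}$ with the multiplicity of $CC(\cc_{S_1}|_V)$ along the cotangent fiber $T^*_z V$, up to a controlled sign shift; this reduces the problem to the case where $S_j$ has collapsed to the isolated point $z$, and the relevant local geometry is the germ of $(V\cap \bar S_1, z)$.

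Next I would compute that pointwise multiplicity as an Euler characteristic of a local Morse datum. The hypothesis that $\phi$ is a generic linear function on $V$ vanishing at $z$ forces $\phi$ to be a stratified Morse function in the sense of Goresky--MacPherson for the induced stratification on $V$. The microlocal index formula then identifies the multiplicity of $CC(\cc_{S_1}|_V)$ at $T^*_z V$ with, up to sign, the Euler characteristic of the local Morse datum of $\phi$ at $z$. A standard deformation-retraction argument contracts this datum onto the complex link $B\cap Y_{\text{reg}}\cap \phi^{-1}(\epsilon)$ as in \cite{EM99}, so the multiplicity reads directly as $\chi$ (or $\chi_c$) of the complex link.

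The main obstacle will be keeping track of the signs coming from three different sources at once: the shift in non-characteristic restriction, the dualizing shift in the microlocal index formula that passes between $\cc_{S_1}$ and its Verdier dual, and the Poincar\'e-duality comparison $\chi_c = (-1)^{d}\chi$ on the orientable manifold $Y_{\text{reg}}\cap \phi^{-1}(\epsilon)$ of real dimension $d = 2\dim Z - 2$. Combining these contributions should collapse to precisely the prefactor $(-1)^{\dim Z + 1}$ stated in \eqref{Eulerformula}, after which the theorem follows from the form of Ginsburg's microlocal characterization of Euler obstructions recorded in \cite{Gin86} and \cite{Dim04}.
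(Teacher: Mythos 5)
You should first note that the paper does not prove Theorem \ref{Kashiwara} at all: it is imported verbatim (as the footnote indicates) from \cite[Theorem 1.1]{EM99}, with \cite{Gin86} and \cite{Dim04} as further references, and the authors only ever \emph{apply} it (in Proposition \ref{Whitney} and Example \ref{threeByThreeCase}). So there is no in-paper argument to compare yours against; the only fair benchmark is the literature proof, and your outline does follow that standard route --- microlocal index formula, non-characteristic restriction to the normal slice $V$, and stratified Morse theory for the generic linear function $\phi$.

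That said, as a proof your text has a genuine gap: the two items that constitute the actual content of \eqref{Eulerformula} are asserted rather than derived. First, you say the local Morse datum ``contracts onto the complex link,'' so the multiplicity ``reads directly as $\chi$ (or $\chi_c$) of the complex link.'' For the sheaf in question, $\cc_{S_1}$ extended by zero from the \emph{open} stratum, the normal Morse datum is a pair whose relative Euler characteristic is $\chi_c$ of the portion of the complex link lying in $S_1$, with no ``$1-{}$'' term precisely because $z\notin\operatorname{supp}\cc_{S_1}$; for the constant sheaf on the closure one would instead get $1-\chi(\text{link})$. Your sketch does not distinguish these cases, and the hedge ``$\chi$ (or $\chi_c$)'' signals that the distinction has not been pinned down. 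Second, you explicitly defer the sign bookkeeping (``should collapse to precisely the prefactor $(-1)^{\dim Z+1}$''); but the sign is the whole point of the formula --- it is what makes $e_{21}=-2$ in Example \ref{threeByThreeCase} and $e_{(mn)}=(-1)^{m+n-1}(\min\{m,n\}-1)$ in Proposition \ref{Whitney}, and an error there propagates directly into the ML degree computations. To close the argument you would need to carry out the three sign contributions you name (restriction shift, duality shift, and the Poincar\'e-duality comparison on the even-dimensional link) and verify they compose to $(-1)^{\dim S_j+1}$, or else simply cite \cite[Theorem 1.1]{EM99} as the paper does.
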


The affine subspace $V$ is called a "normal slice" of $S_j$, and the intersection $B\cap Y_{\text{reg}}\cap \phi^{-1}(\epsilon)$ is called a "complex link" of the pair $S_j, Y_{\text{reg}}$. By the formula above, every Euler obstruction we consider is computable.

%%%%%%%%%%%%%%%%%%%%%%%%%%%%%%%%%%%%%%%%%%
%%%%Illustrating example for 3x3 matrices rank 2. 
%%%%%%%%%%%%%%%%%%%%%%%%%%%%%%%%%%%%%%%%%%

\begin{example}\label{threeByThreeCase}
Consider the variety of rank at most 2 matrices $X$ in $\mathbb{P}^9$
defined by 
$$\det [p_{ij}]_{3\times 3}=p_{11}+p_{12}+\cdots+p_{33}-p_s=0.$$
Denote the very affine  subvariety of $X$ by $\oX$, that is, 
$$\oX:=\{p\in X|p_{ij}\neq 0 \text{ for all } i,j\text{ and }p_s\neq 0\}.$$
The Whitney stratification of $\oX$ consists of $S_1=\oX_{\text{reg}}$, and $S_2$, the singular point of $\oX$, which are the rank $1$ matrices.
By Corollary \ref{MLdegformula}, we have 
\begin{equation}\label{ML12}
\chi(\oX_{\text{reg}})=e_{11}\MLdegY(\oX)+e_{21}\MLdegY(\bar{S}_2).
\end{equation}

With this equation, we determine the ML degree of $X$.
The rank 1 matrices are known to have ML degree one, so  $\MLdegY(\bar{S}_2)=1$. 
The Euler obstruction $e_{21}$ is equal to the Euler characteristic of the complex link as in Theorem \ref{Kashiwara}. 
%{\color{red} Say how we know what $e_{21}$ is.}
In this case, the complex link turns out to be homeomorphic to a vector bundle over $\pp^1$ (see Lemma \ref{complexlink} for more details). The sign in front of the Euler characteristic is negative, and hence $e_{21}=-2$. 
%Since $X$ is a curve, the complex link consists of finitely many points, whose number is equal to the multiplicity of the singular point $S_2$. 
%Therefore, $e_{21}=-2$.?

The Euler obstruction $e_{11}$ is much easier to determine because this always equals $(-1)^{\dim X}$. So here $e_{11}=-1$. 
In Subsection \ref{calcEC}, we will calculate the Euler characteristic of $\chi(\oX_{\text{reg}})$. In fact, $\chi(\oX_{\text{reg}})=-12$.
Therefore, (\ref{ML12}) implies that $\MLdegX(X)=10$ concluding the example.
\end{example}

In Example \ref{threeByThreeCase}, we used   Corollary \ref{MLdegformula} and topological computations to determine the ML degree of a singular variety. 
In the next section we will again use Corollary \ref{MLdegformula} and topological computations to determine ML degrees.

\section{The ML degree for rank $2$ matrices}\label{win}
%{\color{red}  We state the formula for rank 2 matrices that needs only finitely many ML degrees.}
%{\color{red}  Then we use computations that allow us to determine the ML degree}
%{\color{red}  Next we provide a purely topological proof of the result.}
%{\color{red} I should change all the $X_{m,n}$ to $X_{mn}$ and similarly change all the $e_{m,n}$, $b_{i,j}$, etc. }

To $\mathbb{P}^{mn}$, we associate the coordinates $p_{11},\dots,p_{1n},\dots,p_{m1},\dots,p_{mn},p_s$.
Let $\vX_{mn}$ denote the variety defined by 
$p_{11}+\cdots+p_{mn}-p_s=0$ and the vanishing of 
the $3\times 3$ minors of the matrix %{\color{red}Should we define for general $m\times n$ matrices?}
{\smaller
\begin{equation}\label{matrix}
\left[\begin{array}{cccc}
p_{11} & p_{12} & \dots & p_{1n}\\
\vdots & \vdots & \ddots & \vdots \\
p_{(m-1)1} & p_{(m-1)2} & \dots & p_{(m-1)n}\\
p_{m1} & p_{m2} & \dots & p_{mn}
\end{array}\right].
\end{equation}
}%
We  think of $\vX_{mn}$ as the Zariski  closure of the set of rank $2$ matrices in the distinguished hyperplane of $\mathbb{P}^{mn}$.
Let $Z_{mn}$ be the subvariety of $\vX_{mn}$ defined by the vanishing of the $2\times 2$ minors of the matrix (\ref{matrix}). Then $Z_{mn}$ is the singular locus of $X_{mn}$ for $m, n\geq 3$. 

Denote the very affine varieties associated to $X_{mn}$ and $Z_{mn}$ by $\oX_{mn}$ and $\oZ_{mn}$ respectively. 

We will make topological computations to determine the ML degree of $X_{mn}$. Summarizing, according to Lemma \ref{projectiveveryaffine}, $\MLdegX(X_{mn})=\MLdegY(\oX_{mn})$. Proposition \ref{Whitney} gives a Whitney decomposition of $X_{mn}$ and determines the Euler obstructions for this stratification. Since being Whitney regular and Euler obstruction are local invariants, we obtain a Whitney stratification of $\oX_{mn}$ and the Euler obstructions. 
Using these computations, we reduce our problem to determining a single Euler characteristic $\chi(\oX_{mn}\setminus \oZ_{mn})$ by Corollary \ref{substitute}.
Next,  Theorem \ref{TheoremPower} %in Subsection \ref{calcEC} that 
provides a closed form expression of 
$\chi(\oX_{mn}\setminus \oZ_{mn})$, for fixed $m$, in terms of the elements of a finite sequence  $\Lams_m$ of integers.
%The following lemmas justify this expression. 
We conclude this section by computing $\Lambda_m$ for $m=3$, thereby proving 
Theorem~\ref{hrsWin}.

%\begin{thm}
%The maximum likelihood degree of $\vX_{3n}$    equals  $2^{n+1}-6$ for $n\geq 3$. {\color{red}This theorem is moved to the end of the section. }
%\end{thm}

\subsection{Calculating Euler obstructions}\label{calcEO}

%we use Theorem  \ref{TheoremPower}  $m-1$ times to get a linear set of equations in terms of $\lambda_1,\dots,\lambda_{m-1}$ and the ML degree of $X_{mn}$ for $2\leq n\leq m-1$ (this is correct). 
We will start by proving general results for  $m, n\geq 2$.
 At the end of this section, we will specialize to the case where $m=3$. 
 With some topological computations, we determine 
 $\lambda_1, \lambda_2$ of $\Lams_3$, thereby giving a closed form expression of the ML degree of $X_{3n}$ [Theorem \ref{hrsWin}].

To ease notation, we let $e_{(mn)}$ denote the Euler obstruction $e_{21}$ of the pair $\{Z_{mn}, X_{mn}\setminus Z_{mn}\}$, which is a Whitney stratification of $X_{mn}$ by Corollary \ref{WhitneyS}, and we have
$$\Ymn:=X_{mn}\setminus Z_{mn}.$$
\begin{prop}\label{Whitney}
Denote the Euler obstruction of the pair of strata $(Z_{mn},\Ymn)$ by $e_{(mn)}$. Then  
\begin{equation}\label{Eobstruction}
e_{(mn)}=(-1)^{m+n-1}(\min\{m, n\}-1).
\end{equation}
\end{prop}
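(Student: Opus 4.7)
The plan is to apply Kashiwara's formula (Theorem \ref{Kashiwara}) to the Whitney stratification $\{Z_{mn}, Y_{mn}\}$ of $X_{mn}$ from Corollary \ref{WhitneyS}. Since $Z_{mn}$ is the Segre variety $\pp^{m-1}\times\pp^{n-1}$ embedded in the distinguished hyperplane, $\dim Z_{mn} = m+n-2$, and the formula gives
$$e_{(mn)} \;=\; (-1)^{m+n-1}\,\chi_c(L),$$
where $L$ is the complex link of $Z_{mn}$ inside $X_{mn}$ at a generic point $z\in Z_{mn}$. The proof therefore reduces to showing $\chi_c(L) = \min\{m,n\} - 1$.

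The first step is to describe the local analytic structure of $(X_{mn}, Z_{mn})$ at $z$. The $GL_m\times GL_n$-action on matrices induces an action on the distinguished hyperplane that preserves ranks and is transitive on $Z_{mn}$, so after moving by the group one may assume that in the affine patch $p_s = 1$ the point $z$ is the matrix $E_{11}$. Writing a nearby matrix as $A = \bigl(\begin{smallmatrix} a_{11} & s^T \\ r & A_{**}\end{smallmatrix}\bigr)$, with $a_{11}$ determined by the hyperplane constraint $\sum a_{ij}=1$, the local biholomorphism $A_{**}\mapsto A_{**}' := A_{**} - rs^T/a_{11}$ converts $\rank(A)\le 2$ into $\rank(A_{**}')\le 1$ and $\rank(A)\le 1$ into $A_{**}'=0$. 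Consequently $(X_{mn}, Z_{mn})$ is locally biholomorphic to $(\cc^{m+n-2}\times C,\; \cc^{m+n-2}\times\{0\})$, where $C\subset\cc^{(m-1)(n-1)}$ is the affine cone over the Segre variety $S := \pp^{m-2}\times\pp^{n-2}$.

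One may then take the normal slice to be $V=\{0\}\times C$, so for a generic linear form $\phi$ on $\cc^{(m-1)(n-1)}$ the complex link is $L = B\cap(C\setminus\{0\})\cap\phi^{-1}(\epsilon)$. By the conical structure of $C$, the rescaling $v\mapsto v/\epsilon$ deformation retracts $L$ onto $C\cap\{\phi=1\}$; the quotient $v\mapsto[v]$ then identifies $C\cap\{\phi=1\}$ with $S\setminus H$, where $H := S\cap\{\phi=0\}$ is a generic $(1,1)$-hyperplane section of the Segre embedding. Finally, $\chi(S\setminus H)$ is computed using the projection $\pi:S\to\pp^{\min\{m,n\}-2}$ onto the smaller factor: assuming without loss of generality $m\le n$, the bilinear form cutting out $H$ gives a generically injective linear map $\cc^{m-1}\to(\cc^{n-1})^*$, so every fiber of $\pi|_H$ is a hyperplane $\pp^{n-3}\subset\pp^{n-2}$. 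Hence $\pi|_H$ is a fibration with $\chi(H)=(m-1)(n-2)$, and by additivity
$$\chi_c(S\setminus H) \;=\; (m-1)(n-1) - (m-1)(n-2) \;=\; m-1 \;=\; \min\{m,n\}-1,$$
completing the computation.

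The main technical hurdle is the first step: verifying carefully that the distinguished hyperplane is transverse to every rank stratum at a generic point of $Z_{mn}$, so that the claimed local product description of $(X_{mn}, Z_{mn})$ holds inside the hyperplane and not merely inside $\pp^{mn}$ equipped with its classical determinantal stratification. Once this is in hand, the topological half of the argument (complex link $\cong$ Segre minus a $(1,1)$-section, and its Euler characteristic via the fibration $\pi|_H$) is completely routine.
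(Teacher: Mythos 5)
Your proposal is correct and follows the paper's route: both apply Kashiwara's formula to the stratification $\{Z_{mn}, X_{mn}\setminus Z_{mn}\}$, identify the normal slice at a point of $Z_{mn}$ (via the Schur complement in your write-up, via the explicit vector-bundle projection $\pi$ in Lemma \ref{complexlink}) with the affine cone over the Segre variety $\pp^{m-2}\times\pp^{n-2}$, and pick up the sign $(-1)^{\dim Z_{mn}+1}=(-1)^{m+n-1}$. The only divergence is the last step: the paper shows the complex link itself fibers over $\pp^{m-2}$ with contractible (convex) fibers, hence is homotopy equivalent to $\pp^{m-2}$ with $\chi=m-1$, whereas you use the conic structure to replace the link by $C\cap\{\phi=1\}\cong S\setminus H$ and compute $\chi$ by additivity against the generic $(1,1)$-section $H$; these agree, since $S\setminus H$ fibers over $\pp^{m-2}$ with fibers $\pp^{n-2}$ minus a hyperplane. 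Two small remarks: you should note (as the paper does) that the link is a smooth complex manifold so $\chi_c=\chi$ and homotopy invariance may be used; and the transversality issue you flag as the main hurdle is moot, since the distinguished hyperplane is a graph over the coordinates $p_{ij}$ and one may simply regard $X_{mn}$ as the determinantal variety inside $\pp^{mn-1}$, exactly as the paper does.
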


\begin{proof}
Without loss of generality, we assume that $m\leq n$. When $m=2$, $X_{mn}$ is the distinguished hyperplane defined by 
$p_{11}+p_{12}+\cdots+p_{mn}=p_s$
and $Z_{mn}\subset X_{mn}$ is a smooth subvariety. Thus, the first part of the proposition follows by item {\ref{wsSmooth}} of Example \ref{WSexample}. Moreover, it follows from the definition of Euler obstruction that $e_{(mn)}=(-1)^{\dim Z_{mn}+1}$. Since $\dim Z_{mn}=m+n-2$, the second part of the proposition follows. 
%{\color{blue} When $m=2$ or $n=2$, the result immediately follows as $X_{2n}$ and $X_{m2}$  are isomorphic to the smooth variety $\mathbb{P}^{mn-1}$. }
Therefore, we can assume $m\geq 3$.

We will compute the Euler obstructions using Theorem \ref{Kashiwara}. 
Since $X_{mn}$ is contained in the distinguished hyperplane, consider $X_{mn}$ as a subvariety of $\pp^{mn-1}$, the projective space of all $m\times n$ matrices. 
%By definition\footnote{Here we use the formula of . See also .}
To simplify notation, we will write $X$ and $Z$ instead of $X_{mn}$ and $Z_{mn}$. Then $S_1=X\setminus Z, S_2=Z$ form a Whitney stratification of $X$ according to Corollary \ref{WhitneyS}. 
Next, we will compute the Euler obstruction $e_{(mn)}$ of the pair $Z, X\setminus Z$ using Theorem \ref{Kashiwara}. 
%and use the notion 'normal slice'. 
%A normal slice of a variety $Z$ at the point $z$  is a general linear space with complementary dimension to $Z$ containing the point $z$.

%{\color{red} The Theorem below might belong in Section 2. (This is changed. The orange color part is also confirmed. )}

One can easily compute that $\dim Z=m+n-2$. Under the notation of Theorem \ref{Kashiwara}, since $B\cap (X\setminus Z)\cap \phi^{-1}(\epsilon)$ is a complex manifold, it is orientable and even-dimensional, and hence
$$\chi_c(B\cap (X\setminus Z)\cap \phi^{-1}(\epsilon))=\chi(B\cap (X\setminus Z)\cap \phi^{-1}(\epsilon))$$
by Poincare duality. Since the Euler characteristic is homotopy invariant and since $\chi(\pp^{m-2})=m-1$, the second part of the proposition follows from the following lemma
where we show the link is homotopy equivalent to~$\pp^{m-2}$.
\end{proof}
\begin{lemma}\label{complexlink}
Suppose $m\leq n$. With the above notations, we have  $B\cap (X\setminus Z)\cap \phi^{-1}(\epsilon)$ is homotopy equivalent to $\pp^{m-2}$.
\end{lemma}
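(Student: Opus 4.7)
My plan is to reduce via the $GL_m(\cc)\times GL_n(\cc)$-action to a canonical basepoint, make the normal slice explicit, and then identify the complex link as the total space of a fibration over $\pp^{m-2}$ with contractible fibers.

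First, since the Euler obstruction and the homotopy type of the complex link depend only on the stratum, I would take $z$ to be the rank-$1$ matrix $E_{11}$. Writing a nearby matrix in block form as $\bigl(\begin{smallmatrix} a & b^T \\ c & D\end{smallmatrix}\bigr)$ with $a$ close to $1$, the standard row- and column-clearing gives $\bigl(\begin{smallmatrix} a & b^T \\ c & D\end{smallmatrix}\bigr)\sim\operatorname{diag}(a,\,D - cb^T/a)$, so the analytic change of coordinates $D' := D - cb^T/a$ converts $\rank(M)\le 2$ into $\rank(D')\le 1$ and $\rank(M)\le 1$ into $D'=0$, with $(a,b,c)$ completely free. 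Hence the normal slice to $Z_{mn}$ at $z$ may be taken as the affine subspace $V = \{D'\in M_{(m-1)\times(n-1)}\}$: then $V\cap X_{mn}$ is the affine cone over the Segre variety $\pp^{m-2}\times\pp^{n-2}$, and $V\cap Z_{mn}$ is the origin. A general linear form vanishing at $z$ on $V$ has the shape $\phi(D')=\langle L, D'\rangle$ for a generic $L\in M_{(m-1)\times(n-1)}$.

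With this setup, the complex link is
\[
\mathcal L \;=\; \{\, vw^T : v\in\cc^{m-1}\setminus\{0\},\ w\in\cc^{n-1}\setminus\{0\},\ \|vw^T\|<\delta,\ v^T L w = \epsilon\,\}.
\]
I would then consider the column-space map $\pi:\mathcal L\to\pp^{m-2}$ sending $vw^T\mapsto[v]$. Over $[v_0]$, with $v_0$ a unit representative, the fiber consists of $w\in\cc^{n-1}$ satisfying $\langle L^T v_0,w\rangle=\epsilon$ and $\|w\|<\delta$. The hypothesis $m\le n$ is crucial here: it guarantees that $L^T:\cc^{m-1}\to\cc^{n-1}$ is injective for generic $L$, so $L^T v_0\neq 0$ for every nonzero $v_0$, and by compactness of the unit sphere $\|L^T v_0\|$ is bounded below. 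Consequently, for $|\epsilon|\ll\delta$, the fiber is a non-empty convex open subset of an affine hyperplane in $\cc^{n-1}$ that avoids the origin (so $w\neq 0$ holds automatically), and is therefore contractible.

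Since $\pi$ is a locally trivial fibration with contractible fibers it is a homotopy equivalence, yielding $B\cap (X_{mn}\setminus Z_{mn})\cap\phi^{-1}(\epsilon)\simeq\pp^{m-2}$. The main obstacle is precisely the fiber analysis: the injectivity of $L^T$ is where the assumption $m\le n$ enters, and this asymmetry is what forces the appearance of $\min\{m,n\}-1$ in the Euler obstruction formula of Proposition \ref{Whitney}.
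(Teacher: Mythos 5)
Your proof is correct and follows essentially the same route as the paper's: both take the normal slice at a rank-one point to be the space of $(m-1)\times(n-1)$ matrices of rank at most one (you via the Schur-complement change of coordinates, the paper via the explicit vector-bundle projection $b_{ij}=a_{i1}a_{1j}$), and both then fiber the complex link over $\pp^{m-2}$ by the column-space map and conclude via nonempty convex fibers. Your explicit description of the fiber over $[v_0]$ as a ball sliced by the affine hyperplane $\langle L^{T}v_0,w\rangle=\epsilon$, with generic injectivity of $L^{T}$ accounting for the hypothesis $m\le n$, merely fills in the ``basic linear algebra'' step that the paper leaves to the reader.
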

\begin{proof}
First, we give a concrete description of the normal slice $V$. Notice that $X\subset \mathbb{P}^{mn}$ is contained in the distinguished hyperplane $p_{11}+\cdots+p_{mn}-p_s=0$. 
In this proof, we will consider $X$ as a subvariety of $\mathbb{P}^{mn-1}$ with homogeneous coordinates $p_{11}, \ldots, p_{mn}$. 
Denote the affine chart $p_{11}\neq 0$ of $\mathbb{P}^{mn-1}$ by $U_{11}$. 
Let $a_{ij}=\frac{p_{ij}}{p_{11}}$ ($(i,j)\neq (1,1)$) be the affine coordinates of $U_{11}$ and let $a_{11}=1$. 
Denote the origin of $U_{11}$ by $O$. Let $O$ be the fixed point $z$ in Theorem \ref{Kashiwara}, and $U_{11}$ be the affine space $W$. 

Now, we define a projection $\pi: U_{11}\to Z\cap U_{11}$ by $[a_{ij}]\mapsto [b_{ij}]$, where $b_{ij}=a_{i1}\cdot a_{1j}$ and $a_{11}=1$. Then $U_{11}$ becomes a vector bundle over $Z\cap U_{11}$ via $\pi: U_{11}\to Z\cap U_{11}$. The preimage of $O$ is the vector space parametrized by $a_{ij}$ with $2\leq i\leq m$, $2\leq j\leq n$.

In terms of matrices, we can think of $\pi$ as the following map
{\smaller
$$
\left[\begin{array}{cccc}
1 & a_{12} & \cdots & a_{1n}\\
a_{21} & a_{22} & \cdots & a_{2n}\\
\vdots & \vdots & \ddots & \vdots\\
a_{m1} & a_{m2} & \cdots & a_{mn}
\end{array}\right]
\stackrel{\pi}{\to}
\left[\begin{array}{c}
1\\
a_{21}\\
\vdots\\
a_{m1}
\end{array}\right]\left[\begin{array}{c}
1\\
a_{12}\\
\vdots\\
a_{1n}
\end{array}\right]^{T}=\left[\begin{array}{cccc}
1 & a_{12} & \cdots & a_{1n}\\
a_{21} & a_{21}a_{12} & \cdots & a_{21}a_{1n}\\
\vdots & \vdots & \ddots & \vdots\\
a_{m1} & a_{m1}a_{12} & \cdots & a_{m1}a_{n}
\end{array}\right],$$
}
and we think of the preimage of $O$ as 
{\smaller
$$
\left[\begin{array}{cccc}
1 & 0 & \cdots & 0\\
0 & a_{22} & \cdots & a_{2n}\\
\vdots & \vdots & \ddots & \vdots\\
0 & a_{m2} & \cdots & a_{mn}
\end{array}\right]=\pi^{-1}\left(\left[\begin{array}{cccc}
1 & 0 & \cdots & 0\\
0 & 0 & \cdots & 0\\
\vdots & \vdots & \ddots & \vdots\\
0 & 0 & \cdots & 0
\end{array}\right]\right).
$$
}

By the above construction, we can take the normal slice $V$ at $O$ to be the fiber $\pi^{-1}(O)$. The intersection $V\cap X$ is clearly isomorphic to the affine variety $\{[a_{ij}]_{2\leq i\leq m, 2\leq j\leq n}|\text{rank}\leq 1\}$. Thus, we can define a map $\rho: V\cap (X\setminus Z)\to \pp^{m-2}$ which maps the matrix $\{[a_{ij}]_{2\leq i\leq m, 2\leq j\leq n}\}$ to one of its nonzero column vectors, as an element in $\pp^{m-2}$. Since the rank of $\{[a_{ij}]_{2\leq i\leq m, 2\leq j\leq n}\}$ is 1, the map does not depend on which nonzero column vector we choose. Using basic linear algebra, it is straightforward to check the following two statements about $\rho$.
%{\color{red} Ref: It would help the reader if the authors could include why the restriction of the map to the complex link is surjective and has complex fibers. The authors should menton why surjectivity and complex fibers implies homotopy equivalence or provide appropriate references. It is really a tedious calculation to check the two facts. For the topological statement, we added some arguments and one reference.}
\begin{itemize}
\item The restriction of $\rho$ to $B\cap (X\setminus Z)\cap \phi^{-1}(\epsilon)$ is surjective.
\item The restriction of $\rho$ to $B\cap (X\setminus Z)\cap \phi^{-1}(\epsilon)$ has convex fibers. 
\end{itemize}
Thus, $\rho$ is a fiber bundle with contractible fibers. By choosing a section, one can fiber-wise contract the total space of the fiber bundle to the chosen section.  Hence $\rho$ induces a homotopy equivalence between $B\cap (X\setminus Z)\cap \phi^{-1}(\epsilon)$ and $\pp^{m-2}$. For a more general and technical topological statement, see the main theorem of \cite{Smale}. 
\end{proof}

\begin{cor}\label{substitute}\label{Cor:EvsML}
Let $X_{mn}$ and $\oYmn$ be defined as in the beginning of this section. Then
\begin{equation}\label{Eq:EvsML}
\chi(\oYmn)=-\MLdegX(X_{mn})+(-1)^{m+n-1}(\min\{m, n\}-1). 
\end{equation}
\end{cor}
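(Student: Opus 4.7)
The plan is to recognize the statement as a direct application of Corollary \ref{MLdegformula} to the very affine variety $\mfX=\oXmn$ equipped with the two-stratum Whitney stratification $\{\oYmn,\oZmn\}$. All nontrivial ingredients have already been assembled in the preceding propositions: Proposition \ref{Whitney} supplies the Euler obstruction of the singular stratum, Corollary \ref{WhitneyS} supplies the Whitney stratification, Lemma \ref{projectiveveryaffine} translates projective ML degrees into very affine ML degrees, and Example \ref{topologicalExample} identifies the ML degree of the rank-one variety as $1$. The work, therefore, is purely bookkeeping.

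First, I would verify the hypotheses. By Corollary \ref{WhitneyS}, $\{Z_{mn}, X_{mn}\setminus Z_{mn}\}$ is a Whitney stratification of $X_{mn}$; intersecting with the open torus $(\cc^*)^{mn}$ and invoking item \ref{openset} of Example \ref{WSexample} yields the Whitney stratification $\{\oZmn, \oYmn\}$ of $\oXmn$, with regular locus $S_1=\oYmn$. Since Whitney regularity and the Euler obstruction are local invariants, the obstruction $e_{21}$ of the pair $(\oZmn,\oYmn)$ coincides with $e_{(mn)}$ computed in Proposition \ref{Whitney}, namely $(-1)^{m+n-1}(\min\{m,n\}-1)$. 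The definition of $e_{11}$ gives $e_{11}=(-1)^{\dim\oXmn}$; because the rank-two determinantal variety has dimension $2m+2n-5$, this yields $e_{11}=-1$.

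Next, I would identify the two ML degrees appearing in Corollary \ref{MLdegformula}. The closure $\bar{S}_2$ of $\oZmn$ inside $\oXmn$ is $\oZmn$ itself (since $Z_{mn}\subset X_{mn}$ is closed and intersection with the torus preserves closedness). By Lemma \ref{projectiveveryaffine}, $\MLdegY(\oZmn)=\MLdegX(Z_{mn})$, and the latter is $1$ because $Z_{mn}$ is the independence model of rank-one matrices (Example \ref{topologicalExample}). Similarly, $\MLdegY(\oXmn)=\MLdegX(X_{mn})$.

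Substituting into Corollary \ref{MLdegformula} yields
\begin{equation*}
\chi(\oYmn)=e_{11}\MLdegY(\oXmn)+e_{21}\MLdegY(\oZmn)=-\MLdegX(X_{mn})+(-1)^{m+n-1}(\min\{m,n\}-1),
\end{equation*}
which is the desired identity. The only point requiring genuine care is ensuring that the regular locus of $\oXmn$ really is $\oYmn$ (so that the stratification has only two strata and no further correction terms are needed); this is where the restriction $m,n\geq 3$ matters, since for those values Proposition \ref{Whitney} confirms that $Z_{mn}$ is exactly the singular locus of $X_{mn}$. The degenerate cases $m=2$ or $n=2$, where $X_{mn}$ is smooth, fit the same formula with the singular stratum absent or smooth, as handled in the proof of Proposition \ref{Whitney}.
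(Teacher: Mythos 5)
Your proposal is correct and follows essentially the same route as the paper's own (much terser) proof: apply Corollary \ref{MLdegformula} to the two-stratum Whitney stratification, plug in $e_{11}=-1$, $e_{21}=e_{(mn)}$ from Proposition \ref{Whitney}, and $\MLdegX(Z_{mn})=1$ from Example \ref{topologicalExample}. (Incidentally, your dimension count $2m+2n-5$ is the correct one --- the paper's proof states $\dim X_{mn}^o=2m+2n-3$ --- but the parity, and hence the value of $e_{11}$, is unaffected.)
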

\begin{proof}
By Example \ref{topologicalExample}, we know that $\MLdegX(Z_{mn})=1$. One can easily compute that $\dim \oX_{mn}=2m+2n-3$. Now, the corollary follows from (\ref{Eobstruction}) and Corollary \ref{MLdegformula}.
\end{proof}

\subsection{Calculating Euler characteristics and ML degrees}\label{calcEC}
 In this subsection,  an expression for the Euler characteristic $\chi(\oYmn)$ is given to determine formulas for ML degrees. 

Recall that $\oX_{mn}$ is the complement of all the coordinate hyperplanes in $X_{mn}$, and that $Z_{mn}\subset X_{mn}$ is the subvariety corresponding to rank 1 matrices of size $m\times n$. 
\begin{thm}\label{TheoremPower}
Fix $m$ to be an integer greater than two. 
Then, there exists a sequence, denoted $\Lams_m$, of integers $\lambda_1, \lambda_2, \ldots, \lambda_{m-1}$ such that 
\begin{equation}\label{powerseries}
\chi(\oYmn)=
%\sum_{1\leq i\leq m-1}\lambda_i\cdot \frac{(-1)^{n-1}-i^{n-1}}{i+1}
{(-1)^{n-1}}\sum_{1\leq i\leq m-1}\frac{\lambda_i}{i+1}
%\sum_{1\leq i\leq m-1}\lambda_i\cdot \frac{-i^{n-1}}{i+1}
-\sum_{1\leq i\leq m-1}\frac{\lambda_i}{i+1}\cdot {i^{n-1}},
\quad\text{ for }n\geq 2.
\end{equation} 
\end{thm}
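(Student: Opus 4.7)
The plan is to analyze $\oYmn$ via the column-space map to the Grassmannian $\mathrm{Gr}(2,m)$. After normalizing the projective scale by fixing $\sum_{ij}p_{ij}=1$, a matrix in $\oYmn$ corresponds to an $n$-tuple of columns $(c_1,\ldots,c_n)$ in $\cc^m$ that spans a unique $2$-plane $V$, has all entries nonzero, and satisfies $\sum_j\sigma(c_j)=1$, where $\sigma\colon\cc^m\to\cc$ denotes the coordinate-sum functional. The column-space assignment gives a morphism $\oYmn\to U$, where $U\subset\mathrm{Gr}(2,m)$ is the open subset of $2$-planes contained in none of the coordinate hyperplanes $H_i$ and not contained in $\ker\sigma$.

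I would stratify $U$ by the combinatorial type of the line configuration $\{L_i:=V\cap H_i\}_{i=1}^m$ together with $L_\sigma:=V\cap\ker\sigma$ inside $V$. Along each stratum the number $k$ of distinct $L_i$'s is constant, as is the relation of $L_\sigma$ to them. The fiber of $\oYmn\to U$ over such a $V$ is then isomorphic to a fixed variety. To compute its Euler characteristic, pick coordinates $(u,v)$ on $V$ with $\sigma(u,v)=u$ and project the fiber onto the tuple $(u_j):=(\sigma(c_j))$ satisfying $\sum u_j=1$. Via additivity, the base stratifies according to which $u_j$ vanish; only the open piece with all $u_j\in\cc^*$ contributes nonzero Euler characteristic, namely $T_n:=\{(u_j)\in(\cc^*)^n:\sum u_j=1\}$ with $\chi(T_n)=(-1)^{n-1}$, which follows by inclusion--exclusion on the general-position arrangement of $n$ affine hyperplanes in $\mathbb A^{n-1}$. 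Over this base the fiber in the $v_j$-direction is $(\cc\setminus\{k\text{ points}\})^n$; subtracting the rank-one locus (tuples collinear in $V$, which fibers over $\mathbb P(V)\setminus\{k+1\text{ points}\}$ with fiber $T_n$) and simplifying yields the fiber Euler characteristic
$$
i\bigl[(-1)^{n-1}-i^{n-1}\bigr],\qquad i=\begin{cases}k-1 & L_\sigma\notin\{L_l\},\\ k-2 & L_\sigma=L_l\text{ for some }l.\end{cases}
$$

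Applying multiplicativity of Euler characteristic on each stratum---where the restricted morphism $\oYmn\to U$ is a fibration with fixed fiber---gives
$$
\chi(\oYmn)=\sum_\tau\chi(S_\tau)\cdot i_\tau\bigl[(-1)^{n-1}-i_\tau^{n-1}\bigr].
$$
Terms with $i_\tau=0$ vanish identically. Regrouping by $i\in\{1,\ldots,m-1\}$ matches the form in the theorem with
$$
\lambda_i=i(i+1)\sum_{\tau:\,i_\tau=i}\chi(S_\tau)\in\zz,
$$
since each $\chi(S_\tau)\in\zz$.

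The main obstacle will be the combinatorial bookkeeping in stratifying $U$: one must enumerate the ways the $m$ lines $L_i$ coincide in a generic $V$, distinguish the two cases for $L_\sigma$, and verify the uniform fiber formula above across all strata. A secondary concern is rigorously justifying that within each stratum the morphism $\oYmn\to U$ is genuinely a fibration with constant fiber (not merely constant Euler number), which is the input needed for the multiplicativity step; this should follow by refining the stratification so that the line-configuration data on $V$ can be trivialized \'etale-locally on each stratum.
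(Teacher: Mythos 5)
Your plan is correct and arrives at \eqref{powerseries} by a genuinely different decomposition than the paper's. The paper never passes through the Grassmannian: it kills the strata where some column sum vanishes via a free $\cc^*$-action, splits off the torus $T_n$ of column scalings to reduce to $V_n$ (column sums all equal to $1$), stratifies $V_n$ by the first column independent of the first to get $\chi(V_n)=\sum_l\chi(W_l)$, and then uses the fiber-product identity $W_l\cong W_3\times_{W_2}\cdots\times_{W_2}W_3$ to turn everything into a geometric series $\sum_l(-k)^{l-2}$ over the strata $W_2^{(k)}$, with $\lambda_k=\chi(W_2^{(k)})$. You instead fiber over the open locus $U\subset\mathrm{Gr}(2,m)$, stratify $U$ by the configuration of the lines $L_i$ and $L_\sigma$, and compute the fiber Euler characteristic in closed form, subtracting the rank-one locus by hand; your fiber formula $i\bigl[(-1)^{n-1}-i^{n-1}\bigr]$ is the paper's geometric series already summed. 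The two are compatible: the fiber of $W_2\to U$ over a point of your stratum with invariant $i$ is the set of ordered pairs of distinct points on an affine line minus $i+1$ points, of Euler characteristic $i(i+1)$, so $\chi(W_2^{(i)})=i(i+1)\sum_{\tau:\,i_\tau=i}\chi(S_\tau)$, which is exactly your $\lambda_i$. Your route makes the geometric meaning of the $\lambda_i$ transparent and makes the vanishing of the $i=0$ contribution automatic (the factor of $i$), whereas the paper needs a separate $\cc^*$-argument (Proposition \ref{lambda0}) to show $\lambda_0=0$; the paper's route sets up the explicit strata $W_2^{(k)}$ on which it later computes $\lambda_{m-1}=(m-1)\cdot m!$ (Proposition \ref{lambdam}), a computation you would have to redo on your Grassmannian strata to obtain Corollary \ref{compExpressions}. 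Two steps should be made explicit: (i) the vanishing of the strata with some $u_j=0$ does not follow from additivity alone---you need the free $\cc^*$-action rescaling exactly those columns with $\sigma(c_j)=0$, which is precisely the paper's first lemma; and (ii) for multiplicativity over each $S_\tau$ it suffices that the fibers have constant Euler characteristic (they need not be isomorphic once $k\geq 4$, since the cross-ratios of the lines $L_i$ vary along a stratum), which follows from stratifying the morphism into topologically locally trivial pieces; this is weaker, and easier to justify, than the \'etale-local trivialization you propose.
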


Before proving  Theorem \ref{powerseries}, 
we  quote a hyperplane arrangement result, which follows immediately from the theorem of Orlik-Solomon (see e.g. \cite[Theorem 5.90]{OT92}). 
%In fact, the lemma can also be proved by induction. {\color{red}Can we delete this sentence or say which lemma?}
\begin{lemma}\label{arrangement}
Let $L_1, \cdots, L_r$ be distinct hyperplanes in $\cc^s$. Suppose they are in general position, that is the intersection of any $t$ hyperplanes from $\{L_1, \cdots, L_r\}$ has codimension $t$, for any $1\leq t\leq s$. Denote the complement of $L_1\cup \cdots \cup L_r$ in $\cc^s$ by $M$. Then
\begin{itemize}
\item if $r=s+1$, then $\chi(M)=(-1)^{s}$;
\item if $r=s+2$, then $\chi(M)=(-1)^{s}(s+1)$.
\end{itemize}
\end{lemma}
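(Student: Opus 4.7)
The plan is to reduce the computation to a direct inclusion--exclusion on the hyperplanes and then finish with a short binomial identity. The Euler characteristic is additive on algebraic stratifications, so
$$\chi(M) = \chi(\cc^s) - \chi(L_1 \cup \cdots \cup L_r),$$
and by inclusion--exclusion the right-hand term expands as an alternating sum over intersections $\bigcap_{i \in I} L_i$ with $I \subseteq \{1,\ldots,r\}$. The general-position hypothesis guarantees that such an intersection is isomorphic to $\cc^{s-|I|}$ when $|I| \leq s$, and is empty otherwise, so each surviving intersection contributes Euler characteristic $1$. Collecting terms yields the compact formula
$$\chi(M) = \sum_{t=0}^{s} (-1)^t \binom{r}{t}.$$

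With this formula in hand, both cases follow from the identity $\sum_{t=0}^{r}(-1)^t \binom{r}{t} = 0$ (for $r \geq 1$): the truncated sum of the first $s+1$ terms equals the negative of the missing tail. For $r = s+1$ only the $t = s+1$ term is missing, giving $\chi(M) = -(-1)^{s+1} = (-1)^s$. For $r = s+2$ the missing terms are $(-1)^{s+1}(s+2)$ and $(-1)^{s+2}$, whose negated sum simplifies to $(-1)^s(s+2) - (-1)^s = (-1)^s(s+1)$.

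There is essentially no serious obstacle: the one genuine ingredient is additivity of $\chi$ on complex algebraic varieties, which the paper has already been using freely in earlier Euler characteristic computations, and the rest is elementary binomial arithmetic. As an alternative route matching the hint in the lemma, one could instead invoke the Orlik--Solomon description of the Poincar\'e polynomial of a generic arrangement, $\pi(M,t) = \sum_{k=0}^{s}\binom{r}{k}t^k$, and specialize at $t = -1$ using $\chi(M) = \pi(M,-1)$; this gives exactly the same sum and then the same two-line binomial simplification concludes the proof.
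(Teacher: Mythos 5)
Your proof is correct, and your primary argument is genuinely different from (and more self-contained than) the paper's: the paper gives no computation at all, but simply cites the Orlik--Solomon description of the Poincar\'e polynomial of a generic arrangement (Theorem 5.90 of Orlik--Terao) and evaluates at $t=-1$, which is exactly the alternative route you sketch in your last sentence. Your inclusion--exclusion argument buys a two-line elementary proof from additivity of $\chi$, a tool the paper already uses freely, at the cost of having to know the Euler characteristic of every stratum of the intersection lattice; the Orlik--Solomon route buys the full Betti numbers (not just their alternating sum) and generalizes to non-generic arrangements via the M\"obius function of the lattice, which is why the paper reaches for it. Both computations agree with the closed form $\chi(M)=\sum_{t=0}^{s}(-1)^t\binom{r}{t}$ and your binomial simplifications of the two cases check out. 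One small point worth flagging: your step ``and is empty otherwise'' uses that the common intersection of more than $s$ of the hyperplanes is empty, whereas the lemma as phrased only constrains intersections of at most $s$ hyperplanes. You are reading in the standard (and clearly intended) general-position convention; without it the statement is actually false --- e.g.\ three concurrent lines in $\cc^2$ satisfy the stated codimension condition but give $\chi(M)=0$ rather than $(-1)^s=1$. It would be worth making that hypothesis explicit if you were to splice your proof into the paper.
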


{%\color{blue} 
We will use the proceeding lemma to compute Euler characteristics of stratum of the following stratifications. 
}
{%\color{orange}
Throughout, we assume that $m$ is fixed and to simplify indices have $\mfY_n$ denoting $Y_{mn}^o$,
 the rank $2$ matrices with nonzero coordinates whose entries sum to $1$. 
 In other words,
$$\mfY_n=\left\{[a_{ij}]_{1\leq i\leq m, 1\leq j\leq n}\mathlarger{\mathlarger{\mathlarger{\mathlarger{|}}}}a_{ij}\in \cc^*, \;\sum_{1\leq i\leq m,1\leq j\leq n}a_{ij}=1,\, \rank[a_{ij}]=2\right\}.$$
A stratification of $\mfY_n$ is given by the number of columns summing to zero a matrix has. 
Defining $\mfY_n^{(l)}$ below, 
$$\mfY_n^{(l)}:=\left\{([a_{ij}]_{1\leq i\leq m, 1\leq j\leq n}\in \mfY_n\mathlarger{\mathlarger{\mathlarger{\mathlarger{|}}}} l= \#\left\{j\;{{\mathlarger{\mathlarger{|}}}}\;  \sum_{1\leq i\leq m}a_{ij}=0\right\}\right\},
$$
yields the stratification:
$$\mfY_n=\mfY_n^{(0)}\sqcup \cdots \sqcup \mfY_n^{(n-1)},$$
where each $\mfY_n^{(l)}$ is a locally closed subvariety of $\mfY_n$. 
Note that by definition,
$\mfY^{(0)}_n$ is the set of rank $2$ matrices with nonzero column sums, i.e., 
$$\mfY^{(0)}_n:=\left\{[a_{ij}]_{1\leq i\leq m, 1\leq j\leq n}\in \mfY_n\mathlarger{\mathlarger{\mathlarger{\mathlarger{|}}}}\sum_{1\leq i\leq m}a_{ij}\neq 0 \text{ for each } 1\leq j\leq n \right\}.$$
%
%Note by definition, we have $U'_n=U_n^{(0)}$ and 
%\begin{equation}\label{iso}
%U_n\cong \oYmn.
%\end{equation}
The following lemma will show $\chi(\mfY^{(0)}_n)=\chi (\mfY_n)$ by proving $\chi(\mfY_n^{(l)})=0$ for $l\geq 1$.
}

\begin{lemma}
The Euler characteristic of rank $2$ matrices that sum to one equals the Euler characteristic of rank $2$ matrices with nonzero column sums that sum to one. In other words, with notation as above, 
\begin{equation}\label{firsteq}
\chi\left(\mfY_n\right)=\chi\left(\mfY^{(0)}_n\right)\text{ and }\chi\left(\mfY_n^{(1)}\right)=\chi\left(\mfY_n^{(2)}\right)=\dots=\chi\left(\mfY_n^{(n-1)}\right)=0.
\end{equation}
\end{lemma}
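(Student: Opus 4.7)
The plan is to combine additivity of the Euler characteristic with a free $\cc^*$-action. By additivity applied to the stratification $\mfY_n = \bigsqcup_{l=0}^{n-1} \mfY_n^{(l)}$, we have $\chi(\mfY_n) = \sum_{l=0}^{n-1} \chi(\mfY_n^{(l)})$, so both equalities in \eqref{firsteq} follow at once once we establish the single claim $\chi(\mfY_n^{(l)}) = 0$ for every $l \geq 1$.

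To reach this claim, I would refine each $\mfY_n^{(l)}$ according to \emph{which} $l$ columns sum to zero. For each subset $J \subset \{1, \ldots, n\}$ with $|J| = l$, set
$$
\mfY_n^{(l), J} := \bigl\{A \in \mfY_n^{(l)} \;\big|\; \text{the $j$-th column of $A$ sums to $0$ iff $j \in J$}\bigr\}.
$$
Then $\mfY_n^{(l)} = \bigsqcup_{|J| = l} \mfY_n^{(l), J}$ is a finite stratification by locally closed subvarieties, so by additivity it suffices to show $\chi(\mfY_n^{(l), J}) = 0$ for every $J$ with $|J|=l\geq 1$.

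Now pick any $j_0 \in J$ and let $\cc^*$ act on $\mfY_n^{(l), J}$ by scaling the $j_0$-th column. Because that column sums to zero, scaling it does not change the total sum of entries; and scaling a single column of an $m \times n$ matrix by a nonzero scalar preserves the column span (and hence the rank), preserves nonvanishing of all entries, and preserves the set of columns with zero sum. So the action is well defined on $\mfY_n^{(l), J}$, and it is free, since any single (nonzero) entry of the $j_0$-th column determines $\lambda$. A free $\cc^*$-action on an algebraic variety is Zariski-locally trivial (by Hilbert 90), so the orbit map is a fibration with fiber $\cc^*$, and the fibration property gives
$$
\chi(\mfY_n^{(l), J}) \;=\; \chi(\cc^*)\cdot \chi\bigl(\mfY_n^{(l), J}/\cc^*\bigr)\;=\;0,
$$
since $\chi(\cc^*) = 0$. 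Summing over all $J$ of size $l$ then yields $\chi(\mfY_n^{(l)}) = 0$ for every $l \geq 1$, as desired.

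The only genuine technical wrinkle is justifying that the free $\cc^*$-action really produces a locally trivial fibration so that the fibration property applies; this is standard for free algebraic $\cc^*$-actions, but if one prefers to avoid invoking quotient constructions, one can instead further stratify $\mfY_n^{(l), J}$ into finitely many $\cc^*$-invariant locally closed pieces each of which is a direct product with a $\cc^*$ factor (for instance by fixing a distinguished nonzero entry of the $j_0$-th column as a coordinate) and then apply additivity and the product property $\chi(\cc^*\times M)=\chi(\cc^*)\chi(M)=0$ directly.
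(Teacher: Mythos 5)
Your proof is correct and follows essentially the same route as the paper: a $\cc^*$-action rescaling zero-sum columns, which has no fixed points on the strata with $l\geq 1$, forces their Euler characteristics to vanish (the paper scales all zero-sum columns simultaneously and concludes via $\chi(M)=\chi(M^{\cc^*})$, citing \cite{EM99}). The only soft spot is the assertion that a free $\cc^*$-action is automatically Zariski-locally trivial --- false in general, since such actions need not admit a geometric quotient --- but your own fallback closes this cleanly here: every entry of the $j_0$-th column is nonzero, so $A\mapsto\bigl(a_{1j_0},\,a_{1j_0}^{-1}\cdot A\bigr)$ is a global isomorphism of $\mfY_n^{(l),J}$ with $\cc^*\times\{A\in\mfY_n^{(l),J}\mid a_{1j_0}=1\}$, and the product property gives $\chi=0$.
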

\begin{proof}[Proof of Lemma]
%{\color{magenta}
%Given a matrix $A=(a_{ij})_{1\leq i\leq m, 1\leq j\leq n}\in U_n$, we define $\psi(A)$
%to be the number of columns of $A$ that sum to zero, i.e.,
%the cardinality of the set 
%$$\psi(A)=\#\left\{j\;\mathlarger{\mathlarger{\mathlarger{\mathlarger{|}}}}\; 1\leq j\leq n, \sum_{1\leq i\leq m}a_{ij}=0\right\}.$$
%The function $\psi$ gives a stratification of $U_n$ {\color{blue}into locally closed subsets}. Define $U_n^{(l)}=\{A\in U_n| \psi(A)=l\}$. Then 
%$$U_n=U_n^{(0)}\sqcup \cdots \sqcup U_n^{(n)}$$
%where each $U_n^{(l)}$ is a locally closed subvariety of $U_n$. Moreover, by definition $U'_n=U_n^{(0)}$. 
%}{\color{red}Botong doesn't see why we can delete this paragraph. }
%$U'_n$ is an open subvariety of $U_n$, whose complement is equal to
%$$\left\{(a_{ij})_{1\leq i\leq m, 1\leq j\leq n}\in U_n\mathlarger{\mathlarger{\mathlarger{\mathlarger{|}}}}\sum_{1\leq i\leq m}a_{ij}=0 \text{ for some } 1\leq j\leq n \right\}.$$
We define a $\cc^*$ action on $\mfY_n$ by putting $t\cdot (a_{ij})=a'_{ij}$, where
\begin{equation*}
a'_{ij}=
\begin{cases}
a_{ij} & \text{if $a_{1j}+\cdots+a_{mj}\neq 0$}\\
t \times a_{ij} & \text{if $a_{1j}+\cdots+a_{mj}=0$}.
\end{cases}
\end{equation*}
It is straightforward to check the action preserves each $\mfY_n^{(l)}$.
The action is transitive and continuous on $\mfY_n^{(l)}$ for any $l\geq 1$.
Therefore, $\chi(\mfY_n^{(l)})=0$ for any $l\geq 1$, and hence 
$$\chi(\mfY_n)=\chi\left(\mfY_n^{(0)}\right).$$
\end{proof}

A column sum of a matrix in  $\mfY^{(0)}_n$ can be any element of $\mathbb{C}^*$. Now we consider a subset of $\mfY^{(0)}_n$ where the column sums are all exactly $1$.
 Let 
$V_n$ denote the set of rank $2$ matrices with column sums equal to $1$, i.e.,
$$V_n:=\left\{[b_{ij}]_{1\leq i\leq m, 1\leq j\leq n}\mathlarger{\mathlarger{\mathlarger{\mathlarger{|}}}}b_{ij}\in \cc^*, \;\sum_{1\leq i\leq m}b_{ij}=1 \text{ for each $1\leq j\leq n$, } \rank\left([b_{ij}]\right)=2\right\}.$$
 In Lemma \ref{lemmaUV}, we express the  Euler characteristic of $\mfY^{(0)}_n$ in terms of 
$\chi(V_n)$.

\begin{lemma}\label{lemmaUV}
\begin{equation}\label{UV}
\chi(\mfY^{(0)}_n)=(-1)^{n-1}\chi(V_n).
\end{equation}
\end{lemma}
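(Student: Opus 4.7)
The plan is to realize $\mfY_n^{(0)}$ as an honest product of $V_n$ with a hyperplane-arrangement complement, and then read off the Euler characteristic from Lemma~\ref{arrangement}.

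Set $F := \{(c_1,\dots,c_n) \in (\cc^*)^n : c_1+\cdots+c_n = 1\}$. First I would define the column-scaling map
\[
\Phi \colon V_n \times F \to \mfY_n^{(0)}, \qquad \bigl([b_{ij}], (c_1,\dots,c_n)\bigr) \mapsto [c_j b_{ij}],
\]
which multiplies the $j$-th column of $[b_{ij}]$ by $c_j$. The target matrix has nonzero entries (both factors are nonzero), its $j$-th column sums to $c_j \neq 0$, its total sum equals $\sum_j c_j = 1$, and column scaling preserves rank, so the image lands in $\mfY_n^{(0)}$. Conversely, any $[a_{ij}] \in \mfY_n^{(0)}$ has nonzero column sums $c_j := \sum_i a_{ij}$ with $\sum_j c_j = 1$ and $a_{ij}/c_j \in \cc^*$; these data recover a unique preimage. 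Thus $\Phi$ is an algebraic isomorphism.

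By the product property of the Euler characteristic recalled in Section~1,
\[
\chi(\mfY_n^{(0)}) = \chi(V_n)\cdot \chi(F),
\]
so the lemma reduces to showing $\chi(F) = (-1)^{n-1}$. Eliminating $c_n = 1-c_1-\cdots-c_{n-1}$ identifies $F$ with the complement in $\cc^{n-1}$ of the $n$ hyperplanes
\[
\{c_1 = 0\},\ \{c_2 = 0\},\ \ldots,\ \{c_{n-1} = 0\},\ \{c_1+\cdots+c_{n-1} = 1\},
\]
which are in general position: no coordinate hyperplane contains the affine one, so every $t$-fold intersection has codimension $t$. Applying the first case of Lemma~\ref{arrangement} with $s = n-1$ and $r = n = s+1$ yields $\chi(F) = (-1)^{n-1}$, and combining with the product formula gives (\ref{UV}).

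There is no real obstacle here; the entire argument is a direct application of the product property together with the Orlik--Solomon input already quoted as Lemma~\ref{arrangement}. The only point needing care is that $\Phi$ preserves the rank~$2$ stratum, which is immediate because scaling columns by nonzero scalars neither changes the rank nor introduces vanishing entries.
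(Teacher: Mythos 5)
Your proof is correct and follows essentially the same route as the paper: the paper also writes $\mfY_n^{(0)}$ as a product of $V_n$ with the torus slice $T_n=\{(t_j)\in(\cc^*)^n\mid\sum_j t_j=1\}$ via column scaling, and computes $\chi(T_n)=(-1)^{n-1}$ from Lemma~\ref{arrangement}. The only difference is that you spell out the verification that the scaling map is an isomorphism and that the hyperplanes are in general position, which the paper leaves implicit.
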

\begin{proof}[Proof of Lemma]
%{\color{magenta}Botong can we change $a_j$ to $t_j$ for the notation?}
Let $T_n=\{(t_j)_{1\leq j\leq n}\in (\cc^*)^n|\sum_j t_j=1\}$. 
Define a map $F: T_n\times V_n\to \mfY^{(0)}_n$ by putting $a_{ij}=t_j b_{ij}$; 
we think of the $j$th element of $T_n$ as scaling the $j$th column of $V_n$. 
Clearly, $F$ is an isomorphism. Therefore,
%\begin{equation}
$\chi(\mfY^{(0)}_n)=\chi(T_n)\cdot \chi(V_n)$.
%\end{equation}
$T_n$ can be considered as $\cc^{n-1}$ removing $n$ hyperplanes in general position. By Lemma \ref{arrangement}, $\chi(T_n)=(-1)^{n-1}$, and hence $\chi(\mfY^{(0)}_n)=(-1)^{n-1}\chi(V_n)$. 
\end{proof}

A stratification for $V_n$ can be given by the minimal $j_0$ such that the column vector $[b_{ij_0}]$ is linearly independent from $[b_{i1}]$. Since $\sum_i b_{ij}=1$ for all $j$, if two column vectors are linearly dependent, they must be equal. 
Defining $V_n^{(l)}$ below,
%{\color{blue}}
 $$V_n^{(l)}:=\left\{[b_{ij}]\in V_n| \text{ the column vectors satisfy } [b_{i1}]=[b_{i2}]=\cdots=[b_{i(n-l+1)}]\neq [b_{i(n-l+2)}]\right\},$$
 yields the stratification of $V_n$ by  locally closed subvarieties:
$$
V_n=\bigsqcup_{l=2}^nV_n^{(l)}=V_n^{(2)}\sqcup V_n^{(3)}\sqcup \cdots \sqcup V_n^{(n)}.
$$
Hence
$$
\chi(V_n)=\chi\left(V_n^{(2)}\right)+\chi\left(V_n^{(3)}\right)+ \cdots +\chi\left(V_n^{(n)}\right). 
$$

%We use $W_i$ defined below to stratify $V_n$, $$W_n:=\{(b_{ij})\in V_n| \text{ the first two column vectors $(b_{i1})$ and $(b_{i2})$ are linearly independent}\}.$$
%{\color{blue}}
We use $W_l$ to denote the $m\times l$ rank $2$ matrices with column sums equal to $1$ such that the first two column vectors are linearly independent. 
Then, we have the isomorphism $V_n^{(l)}\cong W_{l}$ by the following map,
$$
\begin{array}{ccccccccc}
[\underbrace{\begin{array}{cccc}
b_{i1} & b_{i1} & \dots & b_{i1}\end{array}} & b_{i\left(n-l+2\right)} & \dots & b_{in}] & \mapsto & [
\underbrace{\begin{array}{cccc}b_{i1} & b_{i(n-l+2)} & \dots & b_{in}\end{array}}].
\\
(n-l+1)\,\text{copies of }b_{i1} &&&&& l\,\text{ columns }
\end{array}$$
Therefore,  
%{\color{red} Jose Adjusted the index of V and W so that $\chi(V_n^{(l)})=\chi(W_l)$.}
\begin{equation}\label{Vsum}
\chi(V_n)=\chi(W_2)+\chi(W_3)+\cdots+\chi(W_n). 
\end{equation}

For any $l\geq 2$, we can define a map $\pi_l: W_l\to W_2$ by taking the first two column. Thus, we can consider all $W_l$ as varieties over $W_2$. The following lemma gives a topological description of $W_l$, which will be useful to compute its Euler characteristics. 
\begin{lemma}\label{power}
For any $l\geq 2$,
$$W_l\cong W_3\times_{W_2}W_3\times_{W_2}\cdots \times_{W_2} W_3$$
where there are $l-2$ copies of $W_3$ on the right hand side and the product is the topological fiber product. In other words, take any point $x\in W_2$ the fiber of $\pi_l: W_l\to W_2$ over $x$ is equal to the $(l-2)$-th power of the fiber of $\pi_3: W_3\to W_2$ over $x$.
\end{lemma}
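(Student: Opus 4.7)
The plan is to exhibit an explicit isomorphism
$$\Phi: W_l \;\longrightarrow\; \underbrace{W_3\times_{W_2}W_3\times_{W_2}\cdots\times_{W_2}W_3}_{l-2\text{ copies}}$$
by sending a matrix with columns $(c_1,c_2,c_3,\dots,c_l)\in W_l$ to the tuple
$$\Phi([c_1\,c_2\,c_3\,\cdots\,c_l])=\bigl([c_1\,c_2\,c_3],\,[c_1\,c_2\,c_4],\,\ldots,\,[c_1\,c_2\,c_l]\bigr).$$
By construction $\Phi$ commutes with the projections to $W_2$ (namely $(c_1,c_2)$), so it lands in the fiber product. The content is to verify that $\Phi$ is bijective with algebraic inverse, and the natural way to do this is fiberwise over $W_2$.

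Fix a point $x=(c_1,c_2)\in W_2$, i.e.\ an $m\times 2$ matrix with columns in $(\cc^*)^m$, each summing to $1$, and with $c_1,c_2$ linearly independent. I would first observe that a matrix $M=[c_1\,c_2\,c_3\,\cdots\,c_l]$ lies in $\pi_l^{-1}(x)$ if and only if each extra column $c_j$ ($3\le j\le l$) satisfies: (i) $c_j\in\mathrm{span}(c_1,c_2)$ (to keep rank equal to $2$), (ii) $\sum_i (c_j)_i=1$, and (iii) every entry of $c_j$ is nonzero. Writing $c_j=\alpha_j c_1+\beta_j c_2$, condition (ii) becomes $\alpha_j+\beta_j=1$ (since $c_1,c_2$ each sum to $1$), so $c_j$ is determined by a single scalar $\alpha_j\in\cc$, and condition (iii) cuts out an open set $F_x\subset\cc$ (the complement of the finitely many values of $\alpha$ that make some coordinate of $\alpha c_1+(1-\alpha)c_2$ vanish). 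Thus the fiber $\pi_l^{-1}(x)$ is canonically identified with $F_x^{\,l-2}$, and in particular $\pi_3^{-1}(x)\cong F_x$.

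With this fiberwise description in hand, $\Phi$ restricted to $\pi_l^{-1}(x)$ is exactly the tautological identification $F_x^{\,l-2}\to F_x\times\cdots\times F_x$ sending $(\alpha_3,\dots,\alpha_l)$ to $(\alpha_3,\dots,\alpha_l)$, which is clearly an isomorphism. Since this holds over every point of $W_2$ and the map $\Phi$ and its evident inverse (taking a compatible tuple $([c_1\,c_2\,c_3^{(k)}])_{k=3}^{l}$ to the matrix $[c_1\,c_2\,c_3^{(3)}\,c_3^{(4)}\,\cdots\,c_3^{(l)}]$) are morphisms of algebraic varieties, $\Phi$ is a global isomorphism. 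The second claim of the lemma (that the fiber of $\pi_l$ over $x$ is the $(l-2)$-th power of the fiber of $\pi_3$ over $x$) is an immediate consequence of the fiberwise analysis above.

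The only mildly subtle point is verifying that the entrywise-nonzero condition really decouples across the columns $c_3,\dots,c_l$: this is the reason the fiber is a Cartesian power rather than something more complicated. But because the nonvanishing constraint on $c_j$ depends only on $\alpha_j$ (and on the fixed data $c_1,c_2$), independence is automatic. There is no constraint linking different columns, since rank is already pinned at $2$ by the first two columns. Everything else is bookkeeping.
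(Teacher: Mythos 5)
Your proposal is correct and is essentially the paper's own argument: the paper constructs the inverse of your $\Phi$ explicitly (collecting the third columns of $l-2$ matrices sharing the same first two columns into one $m\times l$ matrix), and the verification that this is an isomorphism is the same column bookkeeping you carry out. Your additional fiberwise description of $\pi_l^{-1}(x)$ as $F_x^{\,l-2}$ with $F_x\subset\cc$ is a correct refinement that the paper defers to its subsequent lemma on $\chi(F_x)$.
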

\begin{proof}[Proof of Lemma]
Given $l-2$ elements in $W_3$. Suppose they all belong to the same fiber of $\pi_3: W_3\to W_2$. This means that we have $(l-2)$ size $m\times 3$ matrices of rank 2, which all have the same first two columns. Then we can collect the third column of each matrix, and put them after the same first two columns. Thus we obtain a $m\times l$ matrix, whose rank is still 2. In this way, we obtain a map $W_3\times_{W_2}W_3\times_{W_2}\cdots \times_{W_2} W_3\to W_l$, 
$$\begin{array}{ccc}
\underbrace{\begin{array}{cccc}
\left[\begin{array}{ccc}
b_{i1} & b_{i2} & b_{i3}\end{array}\right], & \left[\begin{array}{ccc}
b_{i1} & b_{i2} & b_{i4}\end{array}\right], & \dots, & \left[\begin{array}{ccc}
b_{i1} & b_{i2} & b_{il}\end{array}\right]\end{array}} & \mapsto & \left[\begin{array}{ccccc}
b_{i1} & b_{i2} & b_{i3} & \dots & b_{il}\end{array}\right],\\
\left(l-2\right)\,m\times3\,\text{matrices}
\end{array}
$$
which is clearly an isomorphism. 
\end{proof}
Given a point $x\in W_2$,  we let $F_x$ denote the fiber of $\pi_3: W_3\to W_2$ over $x$. 
Since Euler characteristic satisfies the product formula for fiber bundles, to compute the Euler characteristic of $W_l$, it suffices to study the stratification of $W_2$ by the Euler characteristic of $F_x$, and compute the Euler characteristic of each stratum. More precisely, let  
$$W_2^{(k)}:=\{x\in W_2| \chi(F_x)=-k\},
\text{ so }W_2=\bigsqcup_{k\in\mathbb{Z}}W_2^{(k)}.
$$ 
Then, to compute $\chi(W_l)$, %{\color{red}Shouldn't $l=2$ here? Botong finds nothing wrong in this sentence.}
 it suffices to compute $\chi(W_2^{(k)})$
%{\color{blue} }
 for all $k\in\mathbb{Z}$.
According to Lemma~\ref{boundChiFx}, it suffices to consider $k\in\{0,1,\dots,m-1\}$. For this reason,
 
let $\Lams_m$ denote the sequence $\lambda_0$, $\lambda_1,\dots,\lambda_{m-1}$, where $\lambda_k$ are defined by  
\begin{equation}\label{lambdaEq}
 \lambda_k:=\chi(W_2^{(k)}) \text{ for }0\leq k\leq m-1.
 \end{equation}

As we will soon see in the proofs, we can consider $F_x$ as the complement of some point arrangement in $\mathbb{C}$. The arrangement is parametrized by the point $x\in W_2$. In other words, $W_2$ is naturally a parameter space of point arrangement in $\mathbb{C}$. According to the Euler characteristic of the corresponding point arrangement, $W_2$ is canonically stratified. Our problem is to compute the Euler characteristic of each stratum. The main difficulty to generalize our method to compute the ML degree of higher rank matrices is to solve the corresponding problem for higher dimensional hyperplane arrangements. 

\begin{lemma}\label{boundChiFx}For any $x\in W_2$,
$$0\geq \chi(F_x)\geq -(m-1).$$
Moreover, the map $W_2\to \zz$ defined by $x\mapsto \chi(F_x)$ is a semi-continuous function. In other words, for any integer $k$, $\{x\in W_2|\chi(F_x)\geq -k\}$ is a closed algebraic subset of $W_2$.  In particular, the subsets $W_2^{(k)}=\{x\in W_2|\chi(F_x)= -k\}$, $0\leq  k\leq  (m-1)$, give a stratification of $W_2$ into locally closed subsets. 
%{\color{red} Is this really saying that $F_x$ gives a stratification? Not really. Botong changed the last sentence. Is it clear now? }
\end{lemma}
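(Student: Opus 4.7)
The plan is to identify the fiber $F_x$ with an explicit point-arrangement complement in $\cc$, from which both assertions of the lemma become transparent.

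First I would parametrize $F_x$ explicitly. A point of $F_x$ is a third column $(b_{i3})_{i=1}^m$ with nonzero entries and column sum one, extending $x$ to a rank two matrix. Since the first two columns of $x$ are linearly independent, the rank two condition forces the third column to lie in their span, and the column-sum constraint then forces
\begin{equation*}
b_{i3} \;=\; \alpha\, b_{i1} + (1-\alpha)\, b_{i2},\qquad \alpha \in \cc.
\end{equation*}
The requirement $b_{i3}\ne 0$ for all $i$ excludes those $\alpha$ for which $\alpha b_{i1}+(1-\alpha)b_{i2}=0$. For rows with $b_{i1}=b_{i2}$ this equation reduces to $b_{i2}=0$, which never holds; for rows with $b_{i1}\ne b_{i2}$ the unique forbidden value is $a_i(x):=b_{i2}/(b_{i2}-b_{i1})$. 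Hence
\begin{equation*}
F_x \;\cong\; \cc \setminus \{a_i(x) : 1\le i\le m,\; b_{i1}\ne b_{i2}\},
\end{equation*}
and if $r(x)$ denotes the number of \emph{distinct} elements of this set, then $\chi(F_x)=1-r(x)$.

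Next I would bound $r(x)$. The inequality $r(x)\le m$ is immediate and gives $\chi(F_x)\ge 1-m=-(m-1)$. For the upper bound $\chi(F_x)\le 0$ it is enough to rule out $r(x)=0$: this would force $b_{i1}=b_{i2}$ for every $i$, so the first two columns of $x$ would coincide, contradicting $x\in W_2$. Thus $r(x)\ge 1$ and $\chi(F_x)\le 0$.

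Finally, for the semi-continuity statement, I would observe that for two indices $i,j$ the equality $a_i(x)=a_j(x)$ clears denominators to
\begin{equation*}
\det\begin{pmatrix} b_{i1} & b_{i2} \\ b_{j1} & b_{j2} \end{pmatrix} \;=\; 0,
\end{equation*}
and the vanishing of the same determinant is precisely the statement that the two row vectors $(b_{i1},b_{i2})$ and $(b_{j1},b_{j2})$ are proportional, which also covers the degenerate case $b_{i1}=b_{i2}$, $b_{j1}=b_{j2}$. Thus the condition $r(x)\ge s$ is equivalent to the existence of $s$ rows $i_1,\ldots,i_s$ with $b_{i_t 1}\ne b_{i_t 2}$ for each $t$ and all $\binom{s}{2}$ pairwise $2\times 2$ determinants nonzero. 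This is a union over $s$-subsets of $\{1,\ldots,m\}$ of finite intersections of nonvanishing-polynomial conditions, hence open. Therefore
\begin{equation*}
\{x\in W_2 : \chi(F_x)\ge -k\} \;=\; \{x\in W_2 : r(x)\le k+1\}
\end{equation*}
is closed, and $W_2^{(k)}=\{r(x)=k+1\}$ is locally closed.

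The only real content is the explicit identification of $F_x$ as $\cc$ minus $r(x)$ points; once that step is in hand, the bound on $\chi(F_x)$ and the upper semi-continuity follow from elementary observations about $2\times 2$ determinants. The main conceptual point to keep straight is the distinction between rows with $b_{i1}=b_{i2}$ (which impose no constraint) and the remaining rows (whose forbidden values $a_i(x)$ may or may not coincide), which is exactly what the $2\times 2$ determinant uniformly encodes.
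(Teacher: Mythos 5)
Your proof is correct and follows essentially the same route as the paper: identify $F_x$ with $\cc$ minus the point arrangement $\{b_{i2}/(b_{i2}-b_{i1})\}$, bound the number of distinct points between $1$ and $m$, and observe that coincidences among these points are cut out algebraically. Your explicit reduction of the coincidence condition to the vanishing of the $2\times 2$ minors is a welcome sharpening of the paper's terser ``those conditions can be expressed by algebraic equations,'' and it matches the polynomials the paper itself lists in its $m=3$ example.
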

\begin{proof}[Proof of Lemma]
By definition, 
$$W_2=\left\{[b_{ij}]_{1\leq i\leq m, j=1,2}\mathlarger{\mathlarger{\mathlarger{\mathlarger{|}}}}b_{ij}\in \cc^*, \sum_{1\leq i\leq m}b_{i1}=\sum_{1\leq i\leq m}b_{i2}=1, \rank\left([b_{ij}]\right)=2\right\}.$$
Fix an element $x=[b_{i1}\,b_{i2}]\in W_2$. By definition, the fiber $F_x$ of $\pi_3: W_3\to W_2$ is equal to the following:
%
%$$F_x=\left\{[b_{i3}]_{1\leq i\leq m}\mathlarger{\mathlarger{\mathlarger{\mathlarger{|}}}}b_{i3}\in \cc^*, \sum_{1\leq i\leq m}b_{i3}=1, \text{$(b_{i3})$ is contained in the linear span of $(b_{i1})$ and $(b_{i2})$}\right\}.$$
$$F_x=\left\{[b_{i3}]\in(\mathbb{C}^*)^m\mathlarger{\mathlarger{\mathlarger{\mathlarger{|}}}} \sum_{1\leq i\leq m}b_{i3}=1, \text{$[b_{i3}]$ is contained in the linear span of $[b_{i1}]$ and $[b_{i2}]$}\right\}.$$

Since the columns $[b_{i1}],[b_{i2}],[b_{i3}]$ each sum to one, for any $[b_{i3}]\in F_x$ there exists $T\in \cc$ such that $[b_{i3}]=T\cdot [b_{i1}]+(1-T)\cdot [b_{i2}]$. 
For each $i$, this means  $b_{i3}\neq 0$ is equivalent to $b_{i2}+T\cdot (b_{i1}-b_{i2})\neq 0$. Therefore, for $x= [b_{i1}\,b_{i2}]$,
\begin{equation}
\begin{array}{lll}
F_x &\cong& \left\{T\in\cc\mathlarger{\mathlarger{\mathlarger{|}}}T\neq-\frac{b_{i2}}{b_{i1}-b_{i2}} \text{ if } b_{i1}-b_{i2}\neq 0 \text{ for }i=1,2,\dots,m
\right\}
\\
&\cong &\cc\setminus\left\{-\frac{b_{i2}}{b_{i1}-b_{i2}}\mathlarger{\mathlarger{\mathlarger{|}}}1\leq i\leq m \text{ such that } b_{i1}-b_{i2}\neq 0\right\}.
\end{array}
\end{equation}
Notice that $[b_{i1}]\neq [b_{i2}]$. Therefore, there has to be some $i$ such that $b_{i1}\neq b_{i2}$. Thus $F_x$ is isomorphic to $\cc$ minus some points of cardinality between $1$ and $m$, and hence the first part of the lemma follows. 

The condition that $\chi(F_x)\geq r$ is equivalent to the condition of some number of equalities $b_{i1}=b_{i2}$ and some number of overlaps among $\frac{b_{i2}}{b_{i1}-b_{i2}}$. Those conditions can be expressed by algebraic equations. Thus, the locus of $x$ such that $\chi(F_x)\geq r$ is a closed algebraic subset in $W_2$. 
\end{proof}
\begin{example}\label{locusBound}
%{\color{red}to be changed}
%{\color{red} Let's put in the locus for $m=3$.}
When $m=3$, the Euler characteristic $\chi(F_x)$ is in $\{0, -1, -2\}$.
For example if 
$$x_0=\left[\begin{array}{cc}
1&1\\
2&3\\
-2&-3
\end{array}\right],
\quad x_1=\left[\begin{array}{cc}
2&2\\
1&-2\\
-2&1
\end{array}\right],
\quad x_2=\left[\begin{array}{cc}
4&5\\
-2&7\\
-1&-11
\end{array}\right]$$
then $\chi(F_{x_0})=0,\chi(F_{x_1})=-1,\chi(F_{x_2})=-2$.
This is because $F_{x_0}\cong\mathbb{C}\setminus{\{3\}}$,
$F_{x_1}\cong\mathbb{C}\setminus{\{-\frac{2}{3},\frac{1}{3}   \}}$, and
$F_{x_2}\cong\mathbb{C}\setminus{\{5,-\frac{7}{9},\frac{-11}{10} \}}$.
More generally, every matrix $x\in W_2$ has $\chi(F_x)\geq -2$. 
To have $x\in W_2$ such that 
$\chi(F_x)\geq -1$, at least one of the following six polynomials must vanish:
$$
\begin{array}{ccc}
	(b_{31}-b_{32}),\quad&
	(b_{21}-b_{22}),\quad&
(b_{11}-b_{12}),\quad\\
\det\left[\begin{array}{cc}
b_{11} & b_{12}\\
b_{21} & b_{22}
\end{array}\right], &
\det\left[\begin{array}{cc}
b_{11} & b_{12}\\
b_{31}& b_{32}
\end{array}\right],    &
\det\left[\begin{array}{cc}
b_{21} & b_{22}\\
b_{31} & b_{32}
\end{array}\right].
	\end{array}
$$
These conditions induce six irreducible components in $W_2$ whose points satisfy $\chi(F_x)\geq -1$.
To have $x\in W_2$ such that 
 $\chi(F_x)\geq 0$, two of the the six polynomials above must vanish. However, this does not induce $15=\binom{6}{2}$ components in $W_2$. This is because in twelve of these cases there are no points in $W_2$ that satisfy the conditions. Instead, there are only three components.

\end{example}

\begin{proof}[
Proof of Theorem \ref{TheoremPower}
]

Thus far, we have shown with %(\ref{iso}), (\ref{firsteq}), (\ref{UV}), (\ref{Vsum}),
\cref{firsteq,UV,Vsum},
the following relations:

\begin{equation}\label{new}
\chi(\mfY_n)=\chi(\mfY^{(0)}_n)=(-1)^{n-1}\chi(V_n)=(-1)^{n-1}\sum_{2\leq l\leq n}\chi(W_l).
\end{equation}
Recall that $W_2^{(k)}=\{x\in W_2| \chi(F_x)=-k\}$, and by Lemma \ref{boundChiFx} we have the stratification, 
\begin{equation}
W_2=W_2^{(0)}\sqcup W_2^{(1)}\sqcup \cdots \sqcup W_2^{(m-1)}.
\end{equation}
%{\color{magenta}
Moreover, $W_2^{(k)}$ are locally closed algebraic subsets of $W_2$. 
%} {\color{red}Jose doesn't see why we need the previous sentence. The Euler characteristic is additive only for decompositions into locally closed subvarieties. The above sentence implies that $\pi_l^{-1}(W_2)=\pi_l^{-1}(W_2^0)\cup \pi_l^{-1}(W_2^1)\cup \cdots \cup \pi_l^{-1}(W_2^{m-1})$. This together with following arguments implies the equality (\ref{equalsum}). }
%Over each $W_2^k$, $\pi_3: W_3\to W_2$ is a fiber bundle whose fiber has Euler characteristic $-k$. Now, by Lemma \ref{power}, over $W_2^k$ $\pi_l: W_l\to W_2$ is a fiber bundle whose fiber has Euler characteristic $(-k)^{l-2}$. 
%Now, by Lemma \ref{power}, over $W_2^k$ $\pi_l: W_l\to W_2$ is a fiber bundle whose fiber has Euler characteristic $(-k)^{l-2}$. 
The projection $\pi_3: W_3\to W_2$ induces  a fiber bundle 
over each $W_2^{(k)}$, and by definition of $W_2^{(k)}$, the fiber has Euler characteristic $-k$. 
With  Lemma \ref{power}, we showed $W_l$ is isomorphic to an $(l-2)$ fiber product of $W_3$ over $W_2^{(k)}$.
Thus,  restricting  $\pi_l: W_l\to W_2$ to $W_2^{(k)}$, the induced  bundle's fiber has  Euler characteristic $(-k)^{l-2}$. Then,
\begin{equation}\label{equalsum}
\chi(\pi_l^{-1}(W_2^{(k)}))=\lambda_k\cdot (-k)^{l-2}. 
\end{equation}
Since $W_2^{(k)}$ are locally closed algebraic subsets of $W_2$, $\pi_l^{-1}(W_2^{(k)})$ are locally closed algebraic subsets of $W_l$. Therefore, the additivity of Euler characteristic implies the following,
\begin{equation}\label{Wsum}
\chi(W_l)=\sum_{0\leq k\leq m-1}\chi(\pi_l^{-1}(W_2^{(k)}))=\sum_{0\leq k\leq m-1}\lambda_k \cdot (-k)^{l-2}. 
\end{equation}
Here our convention is $0^0=1$.

By equations (\ref{new}), (\ref{Wsum}) and  Proposition \ref{lambda0}, we have the following equalities.
$$
%\begin{multline*}
\chi(\oYmn)%=\chi(U_n)=\chi(U'_n)=(-1)^{n-1}\chi(V_n)=(-1)^{n-1}\sum_{2\leq l\leq n}\chi(W_l)\\
=(-1)^{n-1}\sum_{2\leq l\leq n}\left(\sum_{0\leq k\leq m-1}\lambda_k\cdot (-k)^{l-2}\right)=(-1)^{n-1}\sum_{0\leq k\leq m-1}\lambda_k\cdot \frac{1-(-k)^{n-1}}{1-(-k)}.
%=(-1)^{n-1}\sum_{1\leq k\leq m-1}\frac{\lambda_l}{1+k}-\sum_{1\leq k\leq m-1}\frac{\lambda_k\cdot k^{n-1}}{1+k}.\hspace{5.7cm}
%\end{multline*}
$$
The last line becomes the same as (\ref{powerseries}) by replacing  $k$ by $i$ and showing $\lambda_0=0$ in Proposition~\ref{lambda0}.
\end{proof}

\begin{prop}\label{lambda0}
Let $\lambda_0$ of $\Lambda_m$ be defined as above, then  $\lambda_0=0$. 
\end{prop}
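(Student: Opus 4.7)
The plan is to stratify $W_2^{(0)}$ by combinatorial type and exhibit a free $\mathbb{C}^*$-action on each stratum. From the proof of Lemma \ref{boundChiFx}, for $x=[b_{ij}] \in W_2$ the fiber is $F_x \cong \mathbb{C} \setminus \{-b_{i2}/(b_{i1}-b_{i2}) : b_{i1} \neq b_{i2}\}$, so $\chi(F_x)=0$ precisely when this punctured set has exactly one element. Two such values coincide if and only if $b_{i1}/b_{i2}=b_{j1}/b_{j2}$. Writing $I(x)=\{i : b_{i1}=b_{i2}\}$ and $J(x)=\{1,\ldots,m\}\setminus I(x)$, the condition $\chi(F_x)=0$ thus says that the ratios $b_{i1}/b_{i2}$ for $i \in J(x)$ share a common value $\alpha \in \mathbb{C}^* \setminus \{1\}$.

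Next, I would combine $\sum_i b_{i1}=\sum_i b_{i2}=1$ with $b_{i1}=b_{i2}$ on $I(x)$ to get $\sum_{i \in J(x)} b_{i1}=\sum_{i \in J(x)} b_{i2}$, and together with $b_{i1}=\alpha b_{i2}$ on $J(x)$ this yields $(\alpha-1)\sum_{i \in J(x)} b_{i2}=0$. Since $\alpha \neq 1$, one obtains the key identity
$$\sum_{i \in J(x)} b_{i1}=\sum_{i \in J(x)} b_{i2}=0.$$
Because each $b_{ij}\in \mathbb{C}^*$, this forces $|J(x)|\geq 2$; also $I(x)\neq \emptyset$ (otherwise $\sum_{i \in J(x)} b_{i2}=1\neq 0$). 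Consequently, $W_2^{(0)}$ decomposes as a disjoint union of locally closed subvarieties $W_2^{(0)}=\bigsqcup_{I,J} W_2^{(0),I,J}$, indexed by decompositions $\{1,\ldots,m\}=I \sqcup J$ with $1\leq |I|$ and $|J|\geq 2$, where $W_2^{(0),I,J}$ is the stratum with $I(x)=I$, $J(x)=J$, and common ratio on $J$.

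On each stratum, I would define a $\mathbb{C}^*$-action by $t\cdot [b_{ij}]=[b'_{ij}]$ with $b'_{ij}=b_{ij}$ for $i \in I$ and $b'_{ij}=t\,b_{ij}$ for $i \in J$. All defining conditions are preserved: entries stay in $\mathbb{C}^*$, the common ratio $\alpha$ is unchanged, the partition $(I,J)$ is preserved, and the column sums remain $1$ thanks to the sum-zero identity. The action is free because every $b_{ij}$ is nonzero. Hence $W_2^{(0),I,J}$ is a principal $\mathbb{C}^*$-bundle over its quotient, and the fibration property gives $\chi(W_2^{(0),I,J})=\chi(\mathbb{C}^*)\cdot \chi(W_2^{(0),I,J}/\mathbb{C}^*)=0$. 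Additivity of Euler characteristic then yields $\lambda_0=\chi(W_2^{(0)})=\sum_{I,J}\chi(W_2^{(0),I,J})=0$.

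The main technical step is the derivation of the sum-zero identity on $J(x)$; once it is in place, everything else is routine, since that identity is precisely what makes the scaling action preserve each stratum and hence annihilate its Euler characteristic.
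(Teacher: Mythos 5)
Your argument is correct and follows essentially the same route as the paper: you derive the same key identity $\sum_{i\in J(x)}b_{i1}=\sum_{i\in J(x)}b_{i2}=0$ and use the same $\mathbb{C}^*$-action scaling the rows where the two columns differ. The only cosmetic difference is that you stratify by the partition $(I,J)$ and invoke the fibration property on each stratum, whereas the paper applies the single action to all of $W_2^{(0)}$ and concludes via $\chi(M)=\chi(M^{\mathbb{C}^*})=\chi(\emptyset)=0$.
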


We divert the proof of Proposition \ref{lambda0} to the end of the section. 
Now, we specify to the case $m=3$ for our first main result. 
\begin{thm}{\bf{[Main Result]}}\label{hrsWin}
The maximum likelihood degree of $X_{3n}$ is given by the following formula. 
\begin{equation}
\MLdegX(X_{3n})=2^{n+1}-6. 
\end{equation}
\end{thm}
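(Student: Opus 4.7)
The plan is to apply Theorem \ref{TheoremPower} specialized to $m = 3$, which together with Proposition \ref{lambda0} expresses $\chi(Y_{3n}^o)$ in terms of only two integers $\lambda_1$ and $\lambda_2$. Combining this with Corollary \ref{substitute}, which for $n \geq 3$ reads
$$\chi(Y_{3n}^o) = -\MLdegX(X_{3n}) + 2(-1)^n,$$
reduces the theorem to computing $\lambda_1 = \chi(W_2^{(1)})$ and $\lambda_2 = \chi(W_2^{(2)})$ in the $m = 3$ case.

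My first step is to compute $\chi(W_2)$ directly. A valid column of a matrix in $W_2$ is parameterized by a point of $C := \{(a,b) \in (\cc^*)^2 : a+b \neq 1\}$, and Lemma \ref{arrangement} gives $\chi(C) = 1$. The product and additivity properties then yield $\chi(W_2) = \chi(C \times C) - \chi(\Delta) = 1 - 1 = 0$, where $\Delta$ is the diagonal. Together with $\lambda_0 = 0$ this furnishes the first relation $\lambda_1 + \lambda_2 = 0$.

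My second step is to compute $\chi(W_2^{(2)})$ by inclusion-exclusion on the six codimension-one divisors $D_1, \ldots, D_6$ of $W_2$ from Example \ref{locusBound}: three are of the form $b_{i1} = b_{i2}$ and the other three are the vanishing loci of the three $2 \times 2$ minors. A direct parameterization of each $D_i$ together with Lemma \ref{arrangement} yields $\chi(D_i) = -2$ in every case. The key combinatorial input is that among the $\binom{6}{2} = 15$ pairwise intersections $D_i \cap D_j$, the rank-two constraint forces all but three to be empty; the three surviving intersections are precisely the codimension-two loci $C_1, C_2, C_3$ of Example \ref{locusBound}, each isomorphic to $(\cc^*)^2 \setminus \Delta$ and hence of Euler characteristic zero. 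All higher intersections similarly force rank one and are therefore empty. Inclusion-exclusion then gives $\chi(D_1 \cup \cdots \cup D_6) = 6\cdot (-2) - 0 = -12$, so
$$\lambda_2 = \chi(W_2^{(2)}) = \chi(W_2) - \chi(D_1 \cup \cdots \cup D_6) = 0 - (-12) = 12,$$
and consequently $\lambda_1 = -12$.

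Substituting these values into Theorem \ref{TheoremPower} gives $\chi(Y_{3n}^o) = 2(-1)^n + 6 - 2^{n+1}$, which combined with Corollary \ref{substitute} yields $\MLdegX(X_{3n}) = 2^{n+1} - 6$. The main obstacle is the divisor-intersection bookkeeping: one must verify carefully, using both the column-sum normalization and the exclusion of the rank-one locus, that exactly three of the fifteen pairwise intersections survive and coincide with the $C_k$. Once this combinatorial structure is pinned down, the computation of each $\chi(D_i) = -2$ is a routine hyperplane-arrangement calculation via Lemma \ref{arrangement}.
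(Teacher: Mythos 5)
Your proposal is correct and arrives at the right values $\lambda_1=-12$, $\lambda_2=12$, but it takes a genuinely different route from the paper. The paper's proof of Theorem \ref{hrsWin} is essentially two lines: it takes the previously computed ML degrees $\MLdegX(X_{32})=1$ and $\MLdegX(X_{33})=10$ from \cite{HKS05}, feeds them through Corollary \ref{substitute} and Theorem \ref{TheoremPower} at $n=2,3$, and reads off the linear equations $\lambda_1+\lambda_2=0$ and $\lambda_2=12$. You instead compute both $\lambda_i$ by direct topology: $\chi(W_2)=0$ via the column parameterization (replacing the input $\MLdegX(X_{32})=1$), and $\lambda_2=\chi(W_2^{(2)})=12$ by inclusion--exclusion over the six divisors of Example \ref{locusBound} (replacing the input $\MLdegX(X_{33})=10$; note the paper's Proposition \ref{lambdam}, specialized to $m=3$, yields the same $\lambda_2=(m-1)\cdot m!=12$ by a fibration over a configuration space, which is the route taken in an alternative argument the authors left unpublished). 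What your approach buys is self-containedness---no reliance on the computations of \cite{HKS05}---at the cost of the divisor-intersection bookkeeping, which I checked is right: two row-equalities force all columns equal, a row-equality paired with a minor involving that row forces rank one, two vanishing minors force rank one, and only the three complementary pairs survive, each isomorphic to $(\cc^*)^2\setminus\Delta$ with Euler characteristic zero. One caveat: the claim that $\chi(D_i)=-2$ is a routine application of Lemma \ref{arrangement} is not quite right, because the relevant arrangements are not in general position; for instance the divisor $b_{11}=b_{12}$ is parameterized by $(t,\alpha,\beta)\in(\cc^*)^3$ with $t+\alpha\neq 1$, $t+\beta\neq 1$, $\alpha\neq\beta$, and these three non-coordinate hyperplanes meet in a line. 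One should instead fiber over $t$, getting generic fiber of Euler characteristic $2$ over $\cc^*\setminus\{1\}$ and special fiber of Euler characteristic $0$ over $t=1$, whence $\chi(D_i)=(-1)\cdot 2+1\cdot 0=-2$; the analogous computation handles the minor divisors. With that repair your argument is complete.
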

\begin{proof}
In \cite{HKS05}, the ML degree of $X_{32}$ and ML degree of $X_{33}$ are determined to be $1$ and $10$ respectively. 
With this information it follows $\lambda_1+\lambda_2=0$ and $\lambda_2=12$ by Theorem $\ref{TheoremPower}$.
\end{proof}

The take away is that finitely many computations  can determine infinitely many ML degrees. 
Using these techniques we may be able to determine ML degrees of other varieties, such as symmetric matrices and Grassmannians, with a combination of applied algebraic geometry and topological arguments.

\begin{proof}[Proof of Proposition \ref{lambda0}]
Recall that $\lambda_0=\chi(W_2^{(0)})$. By definition, $W_2^{(0)}$ consists of those $[b_{i1}\,b_{i2}]$ in $W_2$ such that the cardinality of the set $\{b_{i1}/b_{i2}|1\leq i\leq m, b_{i1}\neq b_{i2}\}$ is equal to 1.

Notice that for $[b_{i1}\,b_{i2}]\in W_2^{(0)}$,
\begin{equation}
\sum_{\substack{1\leq i\leq m \\ b_{i1}\neq b_{i2}}}b_{i1}=\sum_{\substack{1\leq i\leq m \\ b_{i2}\neq b_{i2}}}b_{i1}=0.
\end{equation}
Therefore, we can define a $\cc^*$ action on the set of $m$ by 2 matrices $\{[b_{ij}]_{1\leq i\leq m, j=1,2}\}$ by setting $t\cdot (b_{ij})=(b'_{ij})$, where 
\begin{equation}
b'_{ij}=
\begin{cases}
b_{ij} & \text{if $b_{i1}=b_{i2}$}\\
t \times b_{ij} & \text{otherwise}.
\end{cases}
\end{equation}
Now, it is straightforward to check this $\cc^*$ action preserves $W_2^{(0)}$ and the action is transitive on $W_2^{(0)}$. Recall that when an algebraic variety $M$ admits a $\mathbb{C}^*$ action, $\chi(M)=\chi(M^{\cc^*})$, where $M^{\cc^*}$ is the fixed locus of the action (see \cite[Proposition 1.2]{EM99}). Therefore, $\chi(W_2^{(0)})=\chi(\emptyset)=0$. 
\end{proof}

%%Prop 23 moved to next section.

%%%%%%%%%%%%%%%%%%%%%%%%%%%%%%%%%%%%%%%%%%%%%%%%
%%OLD TOPOLOGICAL PROOF
%%%%%%%%%%%%%%%%%%%%%%%%%%%%%%%%%%%%%%%%%%%%%%%%
\iffalse
\begin{rem}
A  pure topological argument for the proof could be given instead:
Since $X_{32}$ is equal to the projective space $\pp^5$, $\MLdegX(X_{32})=1$. Plug this into (\ref{Eq:EvsML}), we have $\chi(\oX_{32})=0$. 
Therefore by Theorem \ref{TheoremPower}, 
the $\lambda_i$'s of $\Lams_3$ in (\ref{powerseries})  satisfy
$$0=-\lambda_1-\lambda_2.$$
By Proposition \ref{lambdam}, $\lambda_2=12$, and hence $\lambda_1=-12$. Therefore, when $m=3$, (\ref{powerseries}) becomes
$$\chi(\oX_{3n})=-6((-1)^{n-1}-1)+4((-1)^{n-1}-2^{n-1})=2(-1)^n-2^{n+1}+6. $$
By (\ref{Eq:EvsML}), when $n\geq 3$,
$$\MLdegX(X_{3n})=2^{n+1}-6.$$
%This argument relied on Proposition \ref{lambdam} to determine $\lambda_1$ and $\lambda_2$.
%An alternate proof  replaces Proposition \ref{lambdam} with the computational results of \cite{HKS05}.
\end{rem}
\fi

\section{Recursions and closed form expressions}\label{broaderApplications}
In this section, we use Theorem \ref{TheoremPower} to give recursions for the 
Euler characteristic $\chi(\oYmn)$ and thus the ML degree of $X_{mn}$ by 
 $\eqref{Eq:EvsML}$.  
We break the recursions and give closed form expressions in Corollary~\ref{compExpressions}.

\subsection{The recurrence}
Recall that we have $\oYmn:=\oX_{mn}\setminus Z_{mn}^o$.
%Recall that 
%$\chi(\oYmn)=	-\MLdegX(X_{mn})+(-1)^{m+n-1}(\min\{m, n\}-1)$ for $n,m\geq2$.
By Theorem \ref{TheoremPower}, giving a recursion for $\chi(\oYmn)$
is equivalent to giving a  recursion for 
$-\MLdegX(X_{mn})+(-1)^{m+n-1}(m-1)$, assuming $m\leq n$. 
The next theorem gives the recursion for $\chi(\oYmn)$.

\begin{thm}
Fix $m$ and let $-c_i$ be the coefficient of $t^{m-i}$ in
%$p_m(t):=(t+1)\prod_{r=1}^{m-1}(t-r).$
$(t+1)\prod_{r=1}^{m-1}(t-r).$
For $n>m,$ we have 
%$$\chi(X_{mn}^o\setminus Z_{mn})=c_1 \chi(X_{m(n-1)}^o\setminus Z_{m(n-1)})   +c_2 \chi(X_{m(n-2)}^o\setminus Z_{m(n-2)})+\cdots +c_{m} \chi(X_{m(n-m)}^o\setminus Z_{m(n-m)}).$$
$$\chi(Y_{mn}^o)=c_1 \chi(Y_{m(n-1)}^o)   +c_2 \chi(Y_{m(n-2)}^o)+\cdots +c_{m} \chi(Y_{m(n-m)}^o).$$
%where the   $c_i$  are  coefficients of  the characteristic polynomial 
%$p_m(t):=t^m-c_1t^{m-1}-\cdots -c_m$ that equals 

\end{thm}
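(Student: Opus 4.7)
The plan is to view the recurrence as a formal consequence of the closed-form expression for $\chi(\oYmn)$ already provided by Theorem~\ref{TheoremPower}, via the standard theory of homogeneous linear recurrences with constant coefficients.

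First I would rewrite equation~(\ref{powerseries}) as
$$\chi(\oYmn) = \alpha_{-1} \cdot (-1)^{n-1} + \sum_{i=1}^{m-1} \alpha_i \cdot i^{n-1},$$
where $\alpha_{-1} = \sum_{i=1}^{m-1} \lambda_i/(i+1)$ and $\alpha_i = -\lambda_i/(i+1)$ for $1 \leq i \leq m-1$. The point is that, as a function of $n$, the sequence $\chi(\oYmn)$ is a $\mathbb{Q}$-linear combination of the $m$ exponentials $(-1)^{n-1}, 1^{n-1}, 2^{n-1}, \ldots, (m-1)^{n-1}$, with coefficients that do not depend on $n$.

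Next I would invoke the standard fact that such a combination is annihilated by the shift operator $p(E)$, where $p(t)$ is the monic polynomial whose roots are exactly the bases of the exponentials, namely
$$p(t) = (t+1)\prod_{r=1}^{m-1}(t - r).$$
This is precisely the polynomial appearing in the theorem. Expanding $p(t) = t^m - c_1 t^{m-1} - c_2 t^{m-2} - \cdots - c_m$ matches the stated sign convention (since $-c_i$ is the coefficient of $t^{m-i}$). Writing out $p(E)\chi(Y_{m(n-m)}^o) = 0$ and solving for the leading term yields the desired recurrence
$$\chi(\oYmn) = c_1 \chi(Y_{m(n-1)}^o) + c_2 \chi(Y_{m(n-2)}^o) + \cdots + c_m \chi(Y_{m(n-m)}^o).$$

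The one subtlety I would address is the range of validity. Theorem~\ref{TheoremPower} is stated for $n \geq 2$, so at face value the recurrence requires $n - m \geq 2$. However, $Y_{m1}^o$ is empty (every $m \times 1$ matrix has rank at most one, so $X_{m1} = Z_{m1}$), giving $\chi(Y_{m1}^o) = 0$; this agrees with evaluating the right-hand side of (\ref{powerseries}) at $n = 1$. With this convention the recurrence extends to all $n > m$. There is no genuine obstacle in the argument: the real content was established in Theorem~\ref{TheoremPower}, and the present result is a repackaging of that closed form using the annihilator of a finite sum of geometric sequences.
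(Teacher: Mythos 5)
Your proposal is correct and follows essentially the same route as the paper: both deduce the recurrence from the closed form of Theorem~\ref{TheoremPower} by observing that a linear combination of the exponentials $(-1)^{n-1}, 1^{n-1}, \ldots, (m-1)^{n-1}$ is annihilated by the shift operator with characteristic polynomial $(t+1)\prod_{r=1}^{m-1}(t-r)$. Your added remark that $Y_{m1}^o=\emptyset$ (so $\chi(Y_{m1}^o)=0$, consistent with evaluating \eqref{powerseries} at $n=1$) is a worthwhile clarification of the boundary case $n=m+1$ that the paper leaves implicit.
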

\begin{proof}
By Theorem \ref{TheoremPower}, we have 
$$
\chi(\oYmn)=
%\sum_{1\leq i\leq m-1}\lambda_i\cdot \frac{(-1)^{n-1}-i^{n-1}}{i+1}
{(-1)^{n-1}}\sum_{1\leq i\leq m-1}\frac{\lambda_i}{i+1}
%\sum_{1\leq i\leq m-1}\lambda_i\cdot \frac{-i^{n-1}}{i+1}
-\sum_{1\leq i\leq m-1}\frac{\lambda_i}{i+1}\cdot {i^{n-1}}
$$
for $n\geq 2$. 
Therefore $\chi(\oYmn)$ is an 
order $m$ linear homogeneous recurrence relation with constant coefficients. 
The coefficients of such a recurrence are described by the characteristic polynomial with  roots $-1,1,\dots,m-1$, i.e. 
~$t^m-c_1t^{m-1}-\cdots -c_m=(t+1)\prod_{r=1}^{m-1}(t-r)$.
\end{proof}

\iffalse
\begin{rem}
Because  $\chi(\Ymn)$ is determined by a homogeneous linear recurrence, we can express $\chi(\Ymn)$ as a rational generating function. 
Indeed, a straightforward combinatorial argument shows that this generating function  
is determined by clearing the denominators of the~following:
$$
-\sum_{i=1}^{m-1}
\left(
\frac{\lambda_i}{i(i+1)}\cdot\frac{1}{1-iT}
\right)
%- \frac{1}{1+T}  \sum_{i=1}^{m-1}
%\left(
 %\frac{\lambda_i}{i+1}
% \right).
$$
\end{rem}
\fi

%%%%%%%%%%%%%%%%%%%%%%%%%%%%%%%%%%%%%%%%%%%%%%%%%%
%\pagebreak
With these recurrences we determine the following table of ML degrees:
%%%%%%%%%%%%%%%%%%%%%%%%%%%%%%%%%%%%%%%%%%%%%%%%%%%%%%%%%%%%

\iffalse$$
\begin{array}{crrrrrrrrrrrrrrrr}
(m,n)\\
2&  1    &1     &1     &1             &1       &1      &1         &1     &1     &1           &    1     &        1     & 1\\
3&  10   &26   & 58    & 122&   250& 506& 1018& 2042& 4090& 8186\\%& 16378& 32762\   & 65530\\
4& 191& 843& 3119& 10587& 34271& 107883& 333839& 1021947& 3106751& 9402123\\%& 28370159 & 85438107& 256969631\\
5& 6776& 40924& 212936& 1015564& 4586456& 19984444& 84986216& 355355884& 1468188536& 6014113564\\
6& 378477& 2865245& 19177197& 118430045& 692277357& 3892815965& 21284701677& 113995004765& 601136630637& 3132973640285\\
7& 30305766& 274740990& 2244706374& 17048729886& 122818757286& 850742384190& 5720543812614& 37598000632926& 242758607560806& 1545651920696190\\
%8&  9&  10&
\end{array}
$$\fi

\begin{table}[h]\label{smallTable}
%uncomment next line to include a graphic file.  Re-size it as required.
%\centerline{\epsfig{figure=Algstat.eps, height=0.5cm, width=4cm}}
\centerline{
%\frame{FIGURE}
$
\begin{array}{cccccccccccccc}
n=    & m=2& m=3&m=4 & m=5 & m=6 & m=7 \\
 m:& 1&   10 &         191 &          6776 &             378477 & 30305766       \\
 m+1:& 1&   26 &         843 &        40924 &           2865245 & 274740990       \\
 m+2:& 1&   58 &       3119 &      212936 &         19177197 & 2244706374        \\
 m+3:& 1&  122 &    10587 &    1015564 &      118430045  & 17048729886        \\
 m+4:& 1&  250 &    34271 &    4586456 &      692277357  & 122818757286        \\
 m+5:& 1&  506 &  107883 &  19984444 &    3892815965  & 850742384190        \\
 m+6:& 1& 1018 &  333839 & 84986216 &  21284701677  & 5720543812614\\
\vdots&\vdots&\vdots&\vdots&\vdots&\vdots&\vdots
\end{array}
$
}
\caption{\footnotesize ML degrees of $X_{mn}$. \normalsize} \label{tableMLD}
%\begin{table*}\centering
\end{table}
%\end{table*}
Note that our methods are not limited to Table \ref{smallTable}. We give closed form formulas in the next section for infinite families of mixture models. 

\subsection{Closed form expressions}
In this subsection we provide additional closed form expressions.
Using  an inductive procedure (described in the proof of Corollary \ref{compExpressions}),
we    determine $\Lams_m$ for small $m$ that can be extended to arbitrary $m$.%
%\footnote{Macaulay2 implementation at {http://home.uchicago.edu/$\sim$joisro/quickLinks/ECML16/MLDegreeMixtures.m2.}}
\footnote{See the appendix for an implementation.} 

\begin{cor}\label{compExpressions}
For fixed $m=2,3,\dots7$, the closed form expressions  for  $\MLdegX(X_{mn})$ with $m\leq n$ are below:
%$$
%(-1)^{m+n}(m -\min\{m,n\})+\left(
%\frac{\lambda_1}{2} \cdot{1^{n-1}}+
%\frac{\lambda_2}{3}\cdot {2^{n-1}}+\cdots +
 %\frac{\lambda_{m-1}}{m}\cdot(m-1)^{n-1}  
%\right), i.e.
%$$
%
{\smaller
$$
\begin{array}{ccl}
\MLdegX(X_{2n}) & = & 
1
\\
%-(-1)^{n}(3 -\min\{3,n\})+
\MLdegX(X_{3n}) & = & 
\left(
\frac{-12}{2} \cdot{1^{n-1}}+
\frac{12}{3}\cdot {2^{n-1}}\right)
\\
\MLdegX(X_{4n}) & = & 
%(-1)^{n}(4 -\min\{4,n\})+
\left(
\frac{50}{2} \cdot{1^{n-1}}+
\frac{-120}{3}\cdot {2^{n-1}}+
 \frac{72}{4}\cdot3^{n-1}  
\right)
\\
\MLdegX(X_{5n}) & = & 
%-(-1)^{n}(5 -\min\{5,n\})+
\left(
\frac{-180}{2} \cdot{1^{n-1}}+
\frac{780}{3}\cdot {2^{n-1}}+
 \frac{-1080}{4}\cdot3^{n-1} +
 \frac{480}{5}\cdot4^{n-1}  
\right)
\\
\MLdegX(X_{6n}) & = & 
%(-1)^{n}(6 -\min\{6,n\})+
\left(
\frac{602}{2} \cdot{1^{n-1}}+
\frac{-4200}{3}\cdot {2^{n-1}}+
 \frac{10080}{4}\cdot3^{n-1}  +
 \frac{-10080}{5}\cdot4^{n-1}  +
 \frac{3600}{6}\cdot5^{n-1}  
\right)
\\
\MLdegX(X_{7n}) & = & 
%-(-1)^{n}(7 -\min\{7,n\})+
\left(
\frac{-1932}{2} \cdot{1^{n-1}}+
\frac{-20412}{3}\cdot {2^{n-1}}+
 \frac{-75600}{4}\cdot3^{n-1}  +
 \frac{127680}{5}\cdot4^{n-1}  +
 \frac{-100800}{6}\cdot5^{n-1}  +
 \frac{30240}{7}\cdot6^{n-1}  
\right).
\end{array}
$$
}
\end{cor}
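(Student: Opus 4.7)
The plan is to compute the integer sequence $\Lambda_m=(\lambda_1,\ldots,\lambda_{m-1})$ of Theorem \ref{TheoremPower} explicitly for each $m\in\{2,3,\ldots,7\}$. Once this is in hand, combining Theorem \ref{TheoremPower} with Corollary \ref{substitute} gives, for $m\leq n$,
$$\MLdegX(X_{mn}) \;=\; (-1)^n\Bigl(\sum_{i=1}^{m-1}\tfrac{\lambda_i}{i+1}\;-\;(-1)^m(m-1)\Bigr)\;+\;\sum_{i=1}^{m-1}\tfrac{\lambda_i}{i+1}\cdot i^{n-1}.$$
The absence of a $(-1)^n$ summand in the stated closed form is therefore equivalent to the identity $\sum_{i=1}^{m-1}\lambda_i/(i+1)=(-1)^m(m-1)$, which will turn out to be exactly the equation produced by $n=2$ in the linear system set up below; so it need not be verified separately.

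I would proceed by induction on $m$. The base $m=2$ is immediate: when $m=2$ the $3\times 3$ minor ideal defining $X_{2n}$ is trivial, so $X_{2n}$ is the entire distinguished hyperplane in $\mathbb{P}^{2n}$ and has ML degree one. The case $m=3$ is Theorem \ref{hrsWin}. For the inductive step $m\geq 4$, the transposition symmetry $\MLdegX(X_{mn})=\MLdegX(X_{nm})$ combined with the inductive hypothesis supplies explicit values of $\MLdegX(X_{mj})$ for $j=2,3,\ldots,m-1$; substituting each into the displayed identity produces $m-2$ linear equations in the unknowns $\lambda_1,\ldots,\lambda_{m-1}$. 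I then close the system with one further input: either the value $\MLdegX(X_{mm})$ already computed numerically in \cite{HRS14}, or equivalently the structural identity $\lambda_{m-1}=(m-1)\cdot m!$ obtained from a top-stratum Euler characteristic computation in the spirit of Proposition \ref{lambda0}. This yields a square linear system $A\boldsymbol{\lambda}=\mathbf{b}$.

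The main technical point is to verify that $A$ is invertible. Up to a diagonal rescaling its $(i,j)$-entry is $\bigl((-1)^{j}-i^{j}\bigr)/(i+1)$ for $1\leq i,j\leq m-1$. Performing the column operation that replaces column $j$ by column $j$ plus column $j-1$ (for $j\geq 2$) transforms it into the matrix with $(i,j)$-entry $-i^{j-1}$, which is a square Vandermonde matrix in the distinct nodes $1,2,\ldots,m-1$ and therefore has nonzero determinant. Hence $\boldsymbol{\lambda}$ is uniquely determined, and for each $m\in\{3,4,5,6,7\}$ the small system is solved directly to produce the coefficients displayed in the corollary. The entirely mechanical character of this final step is reflected in the appendix implementation.

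The main obstacle in the argument is precisely the invertibility of $A$; once the Vandermonde structure is exposed by the column operation above, the rest of the proof reduces to a finite arithmetic check across the cases $m=3,4,5,6,7$.
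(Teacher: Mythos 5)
Your overall strategy is the same as the paper's: induct on $m$, generate linear relations in $\lambda_1,\dots,\lambda_{m-1}$ from Theorem \ref{TheoremPower} and Corollary \ref{substitute} at $n=2,\dots,m$, feed in the ML degrees known by induction and transposition symmetry, close the system with $\lambda_{m-1}=(m-1)\cdot m!$ (this is Proposition \ref{lambdam}, proved via a fiber bundle over a configuration space rather than by the $\cc^*$-action trick of Proposition \ref{lambda0}, but it is available to you), and prove unique solvability. Your invertibility argument --- add each column to its successor to turn $\bigl((-1)^j-i^j\bigr)/(i+1)$ into $-i^{j-1}$ and recognize a Vandermonde matrix in the nodes $1,\dots,m-1$ --- is a legitimate minor variant of the paper's argument, which instead shows that a null vector of the relevant $(m-2)\times(m-2)$ submatrix forces a polynomial of degree $\le m-2$ to have $m-1$ distinct roots. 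Both work; yours is arguably cleaner when you close the system with the numerical value of $\MLdegX(X_{mm})$, while the paper's version is adapted to treating $\MLdegX(X_{mm})$ as an unknown (note that \cite{HRS14} does not supply $\MLdegX(X_{66})$ or $\MLdegX(X_{77})$, so for $m=6,7$ you must use the $\lambda_{m-1}=(m-1)\cdot m!$ route in any case).

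There is, however, one concrete error. You assert that the identity $\sum_{i=1}^{m-1}\lambda_i/(i+1)=(-1)^m(m-1)$, which is what makes the $(-1)^n$ summand disappear from the displayed closed forms, ``is exactly the equation produced by $n=2$.'' It is not. Setting $n=2$ in Theorem \ref{TheoremPower} gives $\chi(Y^o_{m2})=-\sum_i\tfrac{\lambda_i}{i+1}-\sum_i\tfrac{i\,\lambda_i}{i+1}=-\sum_i\lambda_i$, and combining with $\MLdegX(X_{m2})=1$ and $\min\{m,2\}=2$ in \eqref{Eq:EvsML} yields
$$\sum_{i=1}^{m-1}\lambda_i \;=\; 1+(-1)^m,$$
a different linear relation (check: $-12+12=0$ for $m=3$; $50-120+72=2$ for $m=4$). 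The identity you actually need is not any single row of your system; it is a property of the solution vector that must be verified after solving, which for the finitely many cases $m\le 7$ of the corollary is a trivial arithmetic check ($-6+4=-2$ for $m=3$, $25-40+18=3$ for $m=4$, etc.), but it cannot be waved away as ``need not be verified separately.'' (When you perform this check for $m=7$, you will find it forces $\lambda_2=+20412$ as in Table 3, revealing a sign typo in the corollary's printed $m=7$ formula.) So: keep your argument, but replace the appeal to the $n=2$ equation with an explicit verification, for each $m\in\{3,\dots,7\}$, that the computed $\Lambda_m$ satisfies $\sum_i\lambda_i/(i+1)=(-1)^m(m-1)$.
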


\iffalse
This gives us closed form expressions for the ML degree of of $2\times n,3\times n, \dots,7\times n$  rank $2$ matrices because 
$\MLdegX(X_{mn})=-\chi(\oYmn)+(-1)^{m+n-1}(\min\{m, n\}-1)$:

$-\chi(\oYmn)=
\sum_{1\leq i\leq m-1}\lambda_i\cdot \frac{(-1)^{n}}{i+1}
+
\sum_{1\leq i\leq m-1}\lambda_i\cdot \frac{i^{n-1}}{i+1}$

$-\chi(\oYmn)=
(-1)^{n}
\left(
%\lambda_i\cdot \frac{(-1)^{n}}{i+1}
 \frac{\lambda_1}{2}+
 \frac{\lambda_2}{3}+
\cdots+
 \frac{\lambda_{m-1}}{m}
\right)
+
\left(
\frac{\lambda_1}{2} \cdot{1^{n-1}}+
\frac{\lambda_2}{3}\cdot {2^{n-1}}+\cdots +
 \frac{\lambda_{m-1}}{m}\cdot(m-1)^{n-1}  
\right)$
\fi

\begin{proof}
We find these formulas using an inductive procedure to determine $\Lams_m$ from $\Lams_{m-1}$.
%We show how this method works for the case $m=4$, and see how it immediately generalizes. 

With equations \eqref{Eq:EvsML} and \eqref{powerseries},
 we have the following $m-1$ relations with $n=2,3,\dots,m$:
{\tiny
$$\left[\begin{array}{c}
\MLdegX\left(X_{m2}\right)\\
\MLdegX\left(X_{m3}\right)\\
\vdots\\
\MLdegX\left(X_{mm}\right)
\end{array}\right]+\left(-1\right)^{m}\left[\begin{array}{c}
1\\
-2\\
\vdots\\
\left(-1\right)^{m}\left(m-1\right)
\end{array}\right]=
% $$
%
%$$
\mathbf M
\left[\begin{array}{c}
\lambda_{1}/2\\
\lambda_{2}/3\\
\vdots\\
\lambda_{m-1}/m
\end{array}\right],\text{ where } 
$$
$$\mathbf M:=\left(
\left[\begin{array}{cccc}
1^1 & 2^1 & \cdots & (m-1)^1\\
1^2 & 2^2 & \cdots & (m-1)^2\\
\vdots & \vdots & \ddots & \vdots\\
1^{m-1} & 2^{m-1} & \cdots & (m-1)^{m-1}
\end{array}\right]
-%\left(-1\right)^{m-1}
\left[\begin{array}{cccc}
\left(-1\right)^{1} & \left(-1\right)^{1} & \cdots & \left(-1\right)^{1}\\
\left(-1\right)^{2} & \left(-1\right)^{2} & \cdots & \left(-1\right)^{2}\\
\vdots & \vdots &  & \vdots\\
\left(-1\right)^{m-1} & \left(-1\right)^{m-1} & \cdots & \left(-1\right)^{m-1}
\end{array}\right]
\right)
.$$
}%
For fixed $m$, this system of linear equations has $2m-2$ unknowns:   $\MLdegX (X_{mj})$ for $j=2,\dots,m$ and $\lambda_1,\dots,\lambda_{m-1}$ of $\Lams_m$.
By induction, we may assume we know $\Lams_{m-1}$. 
The $\Lams_{m-1}$ gives us a closed form expression for the ML degrees of $X_{(m-1)j}$ with $j\geq 2$.
Since  $\MLdegX(X_{(m-1)j})=\MLdegX(X_{j(m-1)})$, we have reduced our system of linear equations to $m$ unknowns by substitution. 
By Proposition \ref{lambdam}, we have  $\lambda_{m-1}$ of $\Lams_m$ equals $(m-1)\cdot m!$.
Substituting this value as well,  we  have a linear system of $m-1$ equations in $m-1$ unknowns: 
$\MLdegX(X_{mm}),\lambda_1,\lambda_2,\dots,\lambda_{m-2}$.
 
 A  linear algebra argument shows that there exists a unique solution of the system yielding each $\lambda_j$ of $\Lams_m$ as well as $\MLdegX(X_{mm})$. 
 It proceeds as follows.  Since the unknown $\MLdegX(X_{mm})$ appears only in the last equation, the linear system above has a unique solution if and only if the determinant of the $(m-2)\times(m-2)$ upper left submatrix of $\mathbf{M}$, denoted $\mathbf N$, %$\eqref{submatrix}$ 
 is nonsingular. 
\iffalse
{\tiny
\begin{equation}\label{submatrix}
\left(
\left[\begin{array}{cccc}
1^1 & 2^1 & \cdots & (m-2)^1\\
1^2 & 2^2 & \cdots & (m-2)^2\\
\vdots & \vdots & \ddots & \vdots\\
1^{m-2} & 2^{m-2} & \cdots & (m-2)^{m-2}
\end{array}\right]
%
-%\left(-1\right)^{m-1}
\left[\begin{array}{cccc}
\left(-1\right)^{1} & \left(-1\right)^{1} & \cdots & \left(-1\right)^{1}\\
\left(-1\right)^{2} & \left(-1\right)^{2} & \cdots & \left(-1\right)^{2}\\
\vdots & \vdots &  & \vdots\\
\left(-1\right)^{m-2} & \left(-1\right)^{m-2} & \cdots & \left(-1\right)^{m-2}
\end{array}\right]
\right)
\end{equation}
}\fi%
Consider the row vector ${\bfc}:=[c_1,c_2,\dots,c_{m-2}]$, and 
let $f_{\bfc}(x)=c_1x^1+c_2x^2+\cdots+c_{m-2}x^{m-2}$.
 We will show that ${\bfc}$ is a null vector for $\mathbf N$ %$\eqref{submatrix}$ 
 if and only if $\bfc$ is the zero vector. 
If $\bfc$ is a null vector, 
then, by multiplying  ${\bfc}$ with  $\mathbf N$ %\eqref{submatrix}, 
we see 
$$[f_\bfc(1)-f_\bfc(-1),f_\bfc(2)-f_\bfc(-1),\dots,f_\bfc(m-2)-f_\bfc(-1)]^T=0.$$ 
In other words,  $(f_\bfc(x)-f_\bfc(-1))$ has $m-1$ distinct roots and must be the zero polynomial. 
This means $f_\bfc(x)$ is a constant, and $\bfc={\bf 0}$.
 %end blue

Using the inductive procedure described above, we determined the following table of $\Lams_m$ to  yield the closed form expressions we desired. 
{\smaller
\begin{table}[h]\label{LamTable}
%uncomment next line to include a graphic file.  Re-size it as required.
%\centerline{\epsfig{figure=Algstat.eps, height=0.5cm, width=4cm}}
\centerline{
%\frame{FIGURE}
$
\begin{array}{rrrrrrrrrrrr}
  &\lambda_1 &\lambda_2 &\lambda_3 &\lambda_4 &\lambda_5 &\lambda_6 \\
 \Lams_2:&   2\\
\Lams_3:&   -12 & 12\\
\Lams_4:&   50 & -120& 72\\
\Lams_5:&   -180 & 780 & -1080 & 480\\
\Lams_6:&   602 & -4200 & 10080 &-10080 &3600\\ 
\Lams_7:&   -1932&   20412&  -75600&  127680&  -100800&  30240
% m &(-1)^m(3^m - 2^{m+1} + 1)&&&&&&(m)(m+1)!
\end{array}$
}
\caption{\footnotesize The $\lambda_i$  of $\Lams_{m}$. \normalsize} %\label{tableLam}
%\begin{table*}\centering
\end{table}
}
%\end{table*}
\end{proof}

\begin{rem}\label{AminoAcidRemark}
With our recursive methods  we determined the ML degree of $X_{20,20}$ to be 
{\smaller$$\text{19 674 198 689 452 133 729 973 092 792 823 813 947 695}\approx1.967\times10^{40}.$$	}%
While it is currently infeasible to track this large number of paths via numerical homotopy continuation as  in 
\cite{HRS14},
it may be possible to track much fewer paths using the topology of the variety. 
Preliminary results by the \textit{2016 Mathematics Research Community in Algebraic Statistics}
\textit{Likelihood Geometry Group} for weighted independence models deform a variety with ML degree one to a variety with large ML degree. 
This deformation deforms the maximum likelihood estimate of one model to the estimate of another.
The stratifications presented here may lead to similar results in the case of mixtures.  
\end{rem}

%{\color{orange}
\begin{prop}\label{lambdam}
Let $\lambda_k$ of $\Lams_m$ be defined as in \eqref{lambdaEq}. 
Then $\lambda_{m-1}$ of $\Lams_m$ equals $(m-1)\cdot m!$.
\end{prop}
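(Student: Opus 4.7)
The plan is to give an explicit parametrization of $W_2^{(m-1)}$ that exhibits it as the total space of a locally trivial fibration over a configuration space, then compute the Euler characteristic of base and fiber separately using Lemma \ref{arrangement} and the standard formula for configuration spaces of surfaces.

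First I would rewrite the defining condition for $W_2^{(m-1)}$ in terms of row ratios. From Lemma \ref{boundChiFx}, $\chi(F_x)=-(m-1)$ exactly when the $m$ points $-b_{i2}/(b_{i1}-b_{i2})$ are pairwise distinct and defined; a short algebraic manipulation shows this is equivalent to $r_i:=b_{i1}/b_{i2}$ being pairwise distinct and different from $1$. Parametrize $W_2^{(m-1)}$ by $(r_1,\dots,r_m,b_{12},\dots,b_{m2})$ with $b_{i1}=r_i b_{i2}$, subject to: (a) $b_{i2}\in\cc^*$, (b) $r_i\in\cc^*\setminus\{1\}$ and pairwise distinct, (c) the column-sum conditions $\sum_i b_{i2}=1$ and $\sum_i r_i b_{i2}=1$. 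Rank $2$ is automatic from (b) since the two columns cannot be proportional.

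Next I would consider the projection $\pi: W_2^{(m-1)}\to \mathrm{Conf}_m(\cc^*\setminus\{1\})$ onto the $r$-coordinates. For fixed distinct $(r_1,\dots,r_m)\in(\cc^*\setminus\{1\})^m$, the fiber $\pi^{-1}(r)$ is cut out inside the affine hyperplane pair $\{\sum b_{i2}=1,\ \sum r_i b_{i2}=1\}\cong\cc^{m-2}$ by removing the $m$ hyperplanes $b_{i2}=0$. These $m$ hyperplanes are in general position in $\cc^{m-2}$: any subset of size $k\le m-2$ meets in a codimension-$k$ affine subspace because any $2\times 2$ minor of the relevant $r$-Vandermonde matrix is nonzero, while setting $m-1$ coordinates to $0$ would force some $r_i=1$, which is excluded. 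Since $m=(m-2)+2$, Lemma \ref{arrangement} gives
\[
\chi\bigl(\pi^{-1}(r)\bigr)=(-1)^{m-2}(m-1)=(-1)^m(m-1),
\]
independently of $r$, and in fact $\pi$ is a locally trivial fibration because the combinatorial type of the arrangement is constant on the base (equivalently, $\pi$ is smooth with fibers of constant topological type, so local triviality follows from standard results such as Thom's first isotopy lemma applied to the family of hyperplane complements).

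Then I would compute the Euler characteristic of the base $\mathrm{Conf}_m(\cc^*\setminus\{1\})$. Since $\cc^*\setminus\{1\}=\cc\setminus\{0,1\}$ has Euler characteristic $-1$, iterated application of the Fadell--Neuwirth fibration $\mathrm{Conf}_m(M)\to\mathrm{Conf}_{m-1}(M)$ with fiber $M$ minus $m-1$ points (valid for any open Riemann surface), combined with the fibration property of $\chi$, gives
\[
\chi\bigl(\mathrm{Conf}_m(\cc\setminus\{0,1\})\bigr)=\prod_{k=0}^{m-1}(-1-k)=(-1)^m m!.
\]
Applying the fibration property to $\pi$ then yields
\[
\lambda_{m-1}=\chi\bigl(W_2^{(m-1)}\bigr)=(-1)^m m!\cdot(-1)^m(m-1)=(m-1)\cdot m!.
\]

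The main obstacle is the verification of local triviality of $\pi$; once that is in place, the rest is a direct application of Lemma \ref{arrangement} and the Fadell--Neuwirth computation. Local triviality could alternatively be bypassed by an additivity argument that stratifies $W_2^{(m-1)}$ compatibly with the base, since the fiber Euler characteristic is a constant integer, so the product formula still holds by the additivity of $\chi$.
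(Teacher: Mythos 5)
Your argument is correct and is essentially the same as the paper's: the authors also fibre $W_2^{(m-1)}$ over the space $B_m$ of $m$ distinct points in $\cc^*\setminus\{1\}$ via the ratios $b_{i1}/b_{i2}$, identify the fibre with the complement of $m$ general-position hyperplanes in $\cc^{m-2}$ (Euler characteristic $(-1)^{m-2}(m-1)$ by Lemma \ref{arrangement}), and compute $\chi(B_m)=(-1)^m m!$ by the same iterated fibration you call Fadell--Neuwirth. Your added verifications of general position and local triviality are details the paper leaves implicit, not a different route.
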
 
\begin{proof}
%{\color{red} Briefly, we can realize $W_2^{m-1}$ as a fiber bundle whose base has Euler characteristic $(-1)^m m!$ and fiber has Euler characteristic $(-1)^{m-2} (m-1)$. }
Recall that  $\lambda_{m-1}$ of $\Lams_m$ equals $\chi(W_2^{(m-1)})$. 
By definition, $W_2^{(m-1)}$ consists of all~$[b_{i1}\,b_{i2}]\in W_2$ such that $b_{i1}\neq b_{i2}$ for all $1\leq i\leq m$ and $b_{i1}/b_{i2}$ are distinct for $1\leq i\leq m$. 

Denote by $B_m$ the subset of $(\cc^*\setminus\{1\})^m$ corresponding to $m$ distinct numbers. 
Then, there is a natural map $\pi: W_2^{(m-1)}\to B_m$, defined by $[b_{ij}]\mapsto \left(\frac{b_{11}}{b_{12}}, \ldots, \frac{b_{m1}}{b_{m2}}\right)$. The map is surjective. Moreover, one can easily check that under the map $\pi$, $W_2^{(m-1)}$ is a fiber bundle over $B_m$. In addition, the fiber is isomorphic to the complement of $m$ hyperplanes in $\cc^{m-2}$ in general position. By Lemma \ref{arrangement}, the fiber has Euler characteristic $(-1)^{m-2}(m-1)$. 

The Euler characteristic of $B_m$ is equal to $(-1)^m\cdot m!$ by induction. In fact, $B_m$ is a fiber bundle over $B_{m-1}$ with fiber homeomorphic to $\cc^*\setminus \{m \text{ distinct points}\}$. Therefore, 
$$\chi(W_2^{(m-1)})=(-1)^{m-2}(m-1)\cdot (-1)^m m!=(m-1)\cdot m!.$$\end{proof}
%}

\section{Conclusion and additional questions}

We have developed the topological tools to determine the ML degree of singular models. 
We proved a closed form expression for  $3 \times n$ matrices with rank $2$ conjectured by \cite{HRS14}. 
In addition, our results provide  a recursion to determine the ML degree of a mixture of independence models where the first random variable has $m$ states and the second random variable has $n$ states.
Furthermore, we have shown how a combination of computational algebra calculations and topological arguments can determine an infinite family of ML degrees. 

The next natural question is to determine the 
 ML degree for higher order mixtures (rank $r$ matrices for $r> 2$). 
Our results give closed form expressions in the corank $1$ case by \textit{maximum likelihood duality} \cite{DR14}. 
Maximum likelihood duality  is quite surprising here because our methods would initially suggest that the corank $1$ matrices have a much more complicated ML degree. 
it would be very nice to give a topological proof in terms of Euler characteristics of maximum likelihood duality. One possible approach is by applying these techniques to the \textit{dual maximum likelihood estimation problem} described in \cite{Rod14}.

Another question concerns the \textit{boundary components} of statistical models as described in \cite{KRS15} for higher order mixtures. % In the mixture model case we described here, the boundary components  are not an issue, but for higher order mixtures they will. 
Can we also use these topological methods to give closed form expressions of the ML degrees of the boundary components of the statistical~model?

Finally, one should notice that the formulas in Corollary \ref{compExpressions} for the ML degrees involve alternating signs. 
It would be great to give a canonical transformation of our alternating sum formula into a positive sum formula. 
One reason why this might be possible is motivated by the work of \cite{GR14}.
Here, entries of the data $u$ are degenerated to zero and some of the critical points go to the boundary of the algebraic torus (i.e. coordinates go to zero). 
Based on how the critical points go to the boundary, one partitions the ML degree into a sum of positive~integers.

%\section{A topological proof for Theorem \ref{}}
%{\color{red}Maybe we can remove this section? Since we no longer require computational results any more. }
%\begin{thm}
%Let $\oY_n$ be the complement of all the coordinate hyperplanes in $X_n\setminus Z$. When $m=3$, 
%$$\chi(\oY_n)=6-2^{n+1}+2(-1)^n.$$
%\end{thm}

%\section{Example involving statistics?}

\section{Acknowledgements}
We would like to  thank  Nero Budur and Jonathan Hauenstein  for introducing the authors to one another. 
We would also like to especially thank Jan Draisma for his comments to improve an earlier version of this manuscript. 
We thank Sam Evens for the explanation of Theorem \ref{Kashiwara}.

%\begin{thebibliography}{1}
\bibliographystyle{abbrv}
%\bibliography{mlEC}

%\bibitem [BW]{BW} N. Budur, B. Wang, Bounding the maximum likelihood degree. 

%\bibitem[D]{D}A. Dimca, Sheaves in topology. Universitext. Springer-Verlag, Berlin, 2004. xvi+236 pp. ISBN: 3-540-20665-5 

%\bibitem[EM]{EM} S. Evens, I. Mirkovi\'c, Characteristic cycles for the loop Grassmannian and nilpotent orbits. Duke Math. J. 97 (1999), no. 1, 109-126. 

%\bibitem[G]{G} V. Ginsburg, Characteristic varieties and vanishing cycles. Invent. Math. 84 (1986), no. 2, 327-402. 

%\bibitem[FK]{FK} J. Franecki and M. Kapranov, The Gauss map and a noncompact Riemann-Roch formula for constructible sheaves on semiabelian varieties. Duke Math. J. 104 (2000), no. 1, 171-180. 

%\bibitem[OT]{OT} P. Orlik, H. Terao, Arrangements of hyperplanes. Grundlehren der Mathematischen Wissenschaften 300. Springer-Verlag, Berlin, 1992. ISBN: 3-540-55259-6 
%\end{thebibliography}

\pagebreak
\section*{Appendix}

The following code creates a function called {\ttfamily recursiveMLDegrees}
 that will compute a table of ML degrees and other Euler characteristics. 
The input consists of two integers $m,n$ with $m\leq n$. 
The output is a sequence of four elements. 
\begin{itemize}
\item Element 0 of the output is the ML degree of $\Xmn.$
\item Element 1 of the output is the list $\Lambda_m={\lambda_1,\lambda_2,..,\lambda_{m-1}}$. 
\item Element 2 of the output is a table of ML degrees given by a list of lists. %The first list consists of ML degrees for $X_{2,j}$ for $j=2,3,\dots,n$. The last list consists of ML degrees for $X_{m,j}$ for $j=m,m+1,\dots,n$. 
%We have  tableML_(i-2) list the (n-i+1) ML degrees of X_{(i,j)} for j between i and n.
\item Element 3 of the output lists $\Lambda_i$ for $i=2,3,\dots,m$. 
%For example, element 3 of {\ttfamily recursiveMLDegrees(7,n)} gives Table \ref{LamTable} for $n\geq 7$.
%We have tableLam_(i-2) gives \Lamda_i={\lamda_1,\lamda_2,..,\lamda_{i-1}}
\end{itemize}
For example, {\ttfamily recursiveMLDegrees(3,4)} returns {\smaller
\begin{verbatim}
(26, {-12, 12}, {{1, 1, 1}, {10, 26}}, {{2}, {-12, 12}})
\end{verbatim}}\noindent
and  {\ttfamily (recursiveMLDegrees(7,n))\_3} returns Table \ref{LamTable} for $n\geq 7$.
Here is the code. 
{\smaller
\begin{verbatim}
  ----MACAULAY2 CODE------------------------------------------------------------------
  recursiveMLDegrees=(M,N)->(mlR=QQ[a_0..a_N]**QQ[c_0..c_N]**QQ[ML_0..ML_N];
    ----mlEquations is a function that gives the relations in the proof of Corollary 4.2. 
    mlEquations=(m,n,mldegX)->( (-mldegX+(-1)^(m+n-1)*(min(m,n)-1))- 
        sum for i from 1 to m-1 list (a_i*( (-1)^(n-1)-i^(n-1))/(i+1)));
    ----M2N lists MLDegree(X_2,j) for j=2,3,...,N; These ML degrees are 1. 
    M2N:=for i from 2 to N list 1;
    ----tableML gives a list of lists of ML degrees.
    ----tableLam gives, for i=2,3,...,m, the lists \Lambda_i={\lamda_1,\lamda_2,..,\lamda_{i-1}.
    tableML:={M2N};tableLam:={{2}};
    ----The loop constructs \Lamda_i and ML degrees using the recursion in the proof of Cor. 4.2.
    for fixM from 3 to M do (
      mldList:=(for i from 2 to fixM-1 list tableML_(i-2)_(fixM-i))|{ML_fixM};
      solveI:=radical ideal({a_(fixM-1)-fixM!*(fixM-1)}|
        for fixN from 2 to fixM list   mlEquations(fixM,fixN,mldList_(fixN-2)));
      newLam:=for j from 1 to fixM-1 list a_j %solveI;
      newLamSub:=for j from 1 to fixM-1 list a_j=>newLam_(j-1);
      tableLam=append(tableLam,(toList newLamSub)/last);
      newMLdegrees:=for i from fixM to N list (sum for j from 1 to N list 
        if j>= fixM then 0 else sub(a_j/(j+1)*j^(i-1), a_j=>newLam_(j-1)));
      tableML=append(tableML,newMLdegrees));
    ----The output of the function has four elements. 
    return (last last tableML,last tableLam,    tableML,tableLam))
  ----EXAMPLE------------------------------------------------------------------
    INPUT: recursiveMLDegrees(3,4)
    OUTPUT: (26, {-12, 12}, {{1, 1, 1}, {10, 26}}, {{2}, {-12, 12}})
  ----EXAMPLE------------------------------------------------------------------
    INPUT: (recursiveMLDegrees(3,4))_0
    OUTPUT: 26
  ------------------------------------------------------------------------------
\end{verbatim}}
%--tableML will be a table of ML degrees. tableML_(i-2) list the ML degrees of X_(i,j) for j between i and N. 

\end{document}